\newtheorem{theorem}{Theorem}[section]
\newtheorem{lemma}[theorem]{Lemma}
\newtheorem{proposition}[theorem]{Proposition}
\newtheorem{corollary}[theorem]{Corollary} 
\theoremstyle{definition}
\newtheorem{definition}[theorem]{Definition}
\newtheorem{notation}[theorem]{Notation} 
\theoremstyle{remark}
\newtheorem{remark}[theorem]{Remark}
\newtheorem{example}[theorem]{Example}
\newcommand{\U}{\mathcal{U}}
\newcommand{\dotminus}{ 
\buildrel\textstyle\ .\over{\hbox{ 
\vrule height3pt depth0pt width0pt}{\smash-} 
}\ }
\newcommand{\Ell}{\mathscr{L}}
\newcommand{\DEF}{\mathbf{Def}}
\newcommand{\R}{\mathbf{R}}
\renewcommand{\S}{\mathbf{S}}
\newcommand{\T}{\mathbf{T}}
\newcommand{\Metric}{\mathbf{Met}}
\newcommand{\Set}{\mathbf{Set}}
\newcommand{\Mod}{\operatorname{Mod}}
\newcommand{\eq}{\text{eq}}
\renewcommand{\SS}{\mathsf{S}}
\newcommand{\RR}{\mathsf{R}}
\newcommand{\UU}{\mathsf{U}}
\newcommand{\ff}{\mathsf{f}}
\newcommand{\dd}{\mathsf{d}}
\newcommand{\M}{\mathcal{M}}
\newcommand{\N}{\mathcal{N}}
\newcommand{\uu}{\mathbf{u}}
\newcommand{\rrto}{\ar[rr]}
\newcommand{\dto}{\ar[d]}
\renewcommand{\P}{\mathscr{P}}
\newcommand{\FF}{\mathcal{F}}
\newcommand{\GG}{\mathcal{G}}
\newcommand{\GGG}{\mathfrak{G}}
\newcommand{\FFF}{\mathfrak{F}}
\newcommand{\iii}{\mathfrak{i}}
\renewcommand{\And}{\wedge}
\newcommand{\Or}{\vee}
\newcommand{\Implies}{\Rightarrow}
\newcommand{\proves}{\vdash}
\renewcommand{\P}{\mathcal{P}}
\newcommand{\C}{\mathbf{C}}
\newcommand{\D}{\mathbf{D}}
\newcommand{\Lang}{\mathcal{L}}
\newcommand{\CLA}{\mathbf{CLA}}
\newcommand{\rto}{\ar[r]}
\newcommand{\urto}{\ar[ur]}
\newcommand{\drto}{\ar[dr]}
\newenvironment{cyclearray}[2][2pt]{\let\foo=\arraycolsep%
\setlength{\arraycolsep}{#1}\left(\kern-1pt \begin{array}{#2}}%
{\end{array}\kern-1pt\right)\setlength{\tabcolsep}{\foo}}
\newcommand{\restr}[2]{{#1}|_{#2}}
\newcommand{\Nat}{\mathbb{N}}
\newcommand{\DOM}{\mathscr{S}}
\newcommand{\var}[1]{{\mathtt{#1}}}
\newcommand{\tvar}[1]{{\overline{\var{#1}}}}
\newcommand{\Def}{:=}
  \title[Metric logical categories]{Metric logical categories and
  	conceptual completeness for first order continuous logic}
  \author{Jean-Martin Albert} %first name
   \address{Department of Mathematics and Statistics\\
     Towson University}
  \author{Bradd Hart}
  \address{Department of Mathematics and Statistics\\
    McMaster University\\
    Hamilton, Ontario}
\begin{document}
\begin{abstract}
We begin the study of categorical logic for continuous model theory.  In particular, we 
\begin{enumerate}
\item introduce the notions of metric logical categories and functors as categorical equivalents of a metric theory and interpretations,
\item prove a continuous version of conceptual completeness showing that $T^\eq$ is the maximal conservative expansion of $T$, and 
\item define the concept of a metric pre-topos.
\end{enumerate}
\end{abstract}

\maketitle

\section{Introduction}%\footnote{AMS classification: 03C95\\Keywords: continuous model theory; conceptual completeness; pre-topos}
%We begin the study of categorical logic in the context of continuous model theory.  Categorical logic views both the syntax and the semantics of a logic in a categorical framework.  The syntax of continuous model theory is more involved than discrete first order logic and we give a definition of a metric logical category that is in some ways more akin to the notion of a hyperdoctrine in the sense of Lawvere.  The semantics of continuous logic can be modelled by making the right choice of functor from a metric logical category into the natural target category of metric spaces.  These functors will be called metric logical functors.  We prove conceptual completeness for continuous first order theories: for a given metric theory $T$, $T^\eq$ is the maximal conservative extension of $T$.  This result allows us to tie all these notions together and give a definition of a metric pre-topos as a metric logical cateogry that is closed under all axiomatizable limits and colimits.  We review some of the history in the discrete case.

In their seminal work on first order categorical logic \cite{first-order-categorical-logic}, Makkai and Reyes established several crucial results necessary for viewing logic in a categorical framework. First of all, they describe which properties a category should have in order to be able to carry out the usual definition of structure and satisfaction.  Consider a first-order language $\Ell$ which, for simplicity, only contains unary function and predicate symbols. The classical interpretation of $\Ell$ assigns a set $\M(\SS)$ to every sort of $\Ell$, a function $\M(\ff):\M(\SS)\to\M(\SS')$ to every function symbol $\ff:\SS\to \SS'$, and a subset $\M(\RR)\subseteq\M(\SS)$ to every predicate symbol $\RR$ on $\SS$. A term, which corresponds to the composite of finitely many function symbols, is interpreted by $\M(\ff_1\circ\cdots\circ\ff_n)=\M(\ff_1)\circ\cdots\circ\M(\ff_n)$, which shows that $\M$ behaves like a functor from a category generated by $\Ell$ to the category of sets. On the other hand, when an atomic formula $\varphi(\tvar{x})$ is intepreted, it gives rise to a subset $\M(\varphi(\tvar{x}))\subseteq\M(\SS_1)\times\cdots\times\M(\SS_n)$, and $\M(\varphi(\tvar{x}))$ is expressible categorically as a limit in $\Set$. This suggests that if a category $\C$ is sufficiently closed under limits, then we can interpret any first-order language in $\C$. In  \cite{first-order-categorical-logic}, Makka\"i and Reyes refer to these categories as {\em logical categories}.

To a logical category $\C$, Makka\"i and Reyes assign a first-order language $\Ell_\C$ and  first-order theory $T_\C$ to capture all the categorical structure of $\C$ as first-order statements. There is a canonical interpretation $\M_\C$ of $(\Ell_\C,T_\C)$ into $\C$ which is universal among all interpretations of first-order languages in $\C$. 
%If $(\C,\Lang)$ is a logical category, then we can also assign a first order language $\Ell$ and a theory $T$. Since we give a detailed description of this situation later, we will not going here. If $\C$ is closed under definable sets, and reflects definable functions, then the two languages are equivalent.

On the other hand, given a theory $T$ in some language $\Ell$, they showed how to construct a category, $\DEF(\Ell,T)$, of definable sets, as well as a canonical interpretation $\M_{(\Ell, T)}:(\Ell, T)\to\DEF(\Ell, T)$ which acts as a generic model of $T$. The correspondence $(\Ell, T) \mapsto \DEF(\Ell,T) = \C \mapsto (\Ell_\C, T_\C)$ satisfies the relationship $$\Mod(\Ell,T) \cong \hom(\DEF(\Ell,T), \Set)\cong \Mod(\Ell_\C,T_\C)$$ an equivalence of categories where $\hom(\DEF(\Ell,T), \Set)$ represents the class of all functors which preserve the logical structure.
%%
%
%Moreover, they isolated the features that $\DEF(\Ell,T)$ has so that from such a category $\C$, one can recover a language $\Ell_\C$ and a theory $T_\C$.  
%\begin{wwarning}

%\end{wwarning}

An important step along the way is the identification of the role of $T^{\eq}$.  They show that $T^{\eq}$ is the maximal conservative extension of $T$ i.e. if $T'$ is a conservative extension of $T$ then $T'$ is interpretable in $T^{\eq}$.  This result is known as conceptual completeness - imaginaries are viewed as concepts associated to $T$ and any other abstract concepts can be realized as concrete imaginaries.  
They also link the concept of logical category to that of a {\em pre-topos}, which was introduced by Grothendieck in \cite{sga4}. Any logical category can be completed to a pre-topos, and the process of completion mirrors the construction of $T^\eq$ quite closely.
%\end{wwarning}
%
%\begin{wwarning}
With this machinery they prove the correspondence $$\C^{\eq} \cong \DEF(\Ell_\C^{\eq},T_\C^{\eq})$$ where $\C$ is a logical category, $\C^\eq$ is the pre-topos completion of $\C$, and $(\Ell_\C,T_\C)$ is the first order language and theory associated to $\C$. This established an equivalence between pre-topoi and first-order theories with elimination of imaginaries.
%\end{wwarning}

Our goal is to prove similar results in the continuous setting.  There are several hurdles to overcome.  First of all, the syntax for continuous logic involves more data, in particular the moduli of continuity, and this must be incorporated.  We attack these problems in sections \ref{sec:continuous-languages-and-structures} through \ref{sec:metric-logical-category}.  We create the notion of a metric logical category as a category with additional structure and define the notion of metric logical functor.  These two notions are meant to be the categorical equivalents of metric theories and structures.  The culmination of this work is Theorem \ref{thm:models-are-equivalent-to-logical-functors}.
Second, a conceptual completeness theorem in the continuous setting must be proved, and is, as Theorem \ref{thm:weak-conceptual-completeness}.   Third, we tie the notion of metric logical category and conceptual completeness together by introducing the appropriate notion of a metric pre-topos.  For us these will be metric logical categories that are maximal with respect to axiomatizable limits and co-limits. The main results regarding metric pre-toposes and their connection with conceptual completeness are Theorem \ref{thm:T-eq-is-a-pre-topos} and Corollary \ref{thm:pre-topos-equiv-of-cats}.

\section{$[0,1]$-valued Languages and Structures}\label{sec:continuous-languages-and-structures}

In this paper we will use the standard continuous logic as described in \cite{continuous-first-order-logic} and \cite{model-theory-for-metric-structures}. We introduce the logic somewhat slowly in order to highlight the categorical approach. 

\subsection*{$[0,1]$-valued Languages}

We will first introduce a $[0,1]$-valued logic without any metric symbols and introduce the metric only later. A continuous language will consist of a set of sorts, a set of relation symbols, and a set of function symbols. All functions and relations will be assumed to be finitary, and we will think of $0$-ary function symbols as constants. For every sort $\SS$ of $\Ell$, we have an unlimited supply of {\em variables of sort $\SS$}, ranging over symbols of the form $\var{x}$, $\var{y}$ and $\var{z}$. The sort of a variable $\var{x}$ will be considered a property of $\var{x}$, and denoted $\DOM(\var{x})$. If $\tvar{x}=(\var{x}_i: i\in I)$ is an $I$-indexed sequence of variables, then we define $\DOM(\tvar{x})=(\DOM(\var{x}_i): i\in K)$, where $K$ is the largest subset of $I$ with the property that $\var{x}_k\not=\var{x}_\ell$ whenever $k,\ell\in K$.

Terms in $[0,1]$-valued logic are constructed as in first-order logic, and consist of finite compositions of function symbols and variables of the appropriate types. We write a term $t$ as $t(\tvar{x})$ to indicate that the free variables of $t$ are among the variables listed in $\tvar{x}$, and the target sort of $t(\tvar{x})$ will be denoted $\DOM(t(\tvar{x}))$.

A \emph{(continuous) $n$-ary logical connective} for $n \in \Nat$ is a formal symbol $\uu$ corresponding to a continuous function $u:[0,1]^{n}\to[0,1]$. An atomic  formula is an expression of the form $\varphi=\RR(t_1(\tvar{x}_1),...,t_n(\tvar{x}_n))$, where $\RR\subseteq \SS_1\times\cdots\times\SS_n$ is a relation symbol, and $\DOM(t_i(\tvar{x}))=\SS_i$. If $\uu$ is an $n$-ary connective, and $(\varphi_i:i<n)$ is a sequence of formulas then $\uu(\varphi_i(\tvar{x}):i< n)$ is a formula. Finally, if $\varphi(\tvar{x}, \tvar{y})$ is a formula, where $\tvar{x}$ is a finite tuple of variables, then $\forall_\tvar{x}[\varphi(\tvar{x}, \tvar{y})]$ and $\exists_\tvar{x}[\varphi(\tvar{x}, \tvar{y})]$ are formulas as well. To make continuous formulas easier to read, we use the shorthands $\varphi\And\psi:=\max\{\varphi, \psi\}$ and $\varphi\Or\psi:=\min\{\varphi, \psi\}$. Again, we write $\varphi(\tvar{x})$ to indicate that the free variables of $\varphi$ are among the variables listed in $\tvar{x}$.
A sentence is a formula with no free variables and a theory is a set of sentences.
%A {\em statement} in $\Ell$ is an expression of the form $\varphi(\tvar{x})\leq\varepsilon$ or $\varphi(\tvar{x})\geq\varepsilon$, where $\varphi(\tvar{x})$ is a formula, and $\varepsilon\in [0,1]$. A {\em sequent} is an expression of the form $\Sigma\Implies\Psi$, where $\Sigma$ and $\Psi$ are finite sets of statements in $\Ell$. A {\em theory} in $\Ell$ is a set of sequents of $\Ell$.
%\end{definition}

\subsection*{$[0,1]$-valued Structures}

A {\em $[0,1]$-valued $\Ell$-structure} consists of an assignment of a set $\M(\SS)$ to every sort symbol of $\Ell$, along with a function $\M(\ff):\prod\M(\SS_i)\to\M(\SS)$ for every function symbol $\ff:\SS_1\times\cdots\times\SS_n\to\SS$, and for every relation symbol $\RR\subseteq \SS_1\times\cdots\times\SS_n$, a function $\M(\RR):\prod\M(\SS_i)\to [0,1]$. 

\begin{notation}
Throughout this paper, we will be using the notation $\M:\Ell\to\Set$ to denote an $\Ell$-structure.
\end{notation}

If $\var{x}$ is a variable, then we will write $\M(\var{x})$ for the set $\M(\DOM(\var{x}))$, and if $\tvar{x}=(\var{x}_i:i\in I)$ is an $I$-indexed tuple of variables, then we define $\M(\tvar{x})=\prod_{i\in I}\M(\var{x}_i)$.  Terms, atomic and quantifier-free formulas in continuous logic are interpreted simply by composing the various interpretations of the symbols in them. For quantified formulas, we define $$\M(\forall_{\tvar{y}}\varphi(\tvar{x}, \tvar{y}))(\overline{x})=\sup\{\M(\varphi(\tvar{x}, \tvar{y})))(\overline{x},\overline{y}):\overline{y}\in \M(\tvar{y})\}$$ and $$\M(\exists_{\tvar{y}}\varphi(\tvar{x}, \tvar{y}))(\overline{x})=\inf\{\M(\varphi(\tvar{x}, \tvar{y})))(\overline{x},\overline{y}):\overline{y}\in \M(\tvar{y})\}$$

%Let us spend some time describing the interpretation of quantified formulas.

\begin{notation}
The interpretation of the term $t(\tvar{x})$ and formulas $\varphi(\tvar{x})$ will be denoted by $\M(t(\tvar{x}))$ and $\M(\varphi(\tvar{x}))$ respectively. From the definition, we see that for every term $t(\tvar{x})$ of $\Ell$, $\M(t(\tvar{x}))$ is a function, $\M(\tvar{x})\to\M(\DOM(t(\tvar{x})))$, and for every formula $\varphi(\tvar{x})$ of $\Ell$, $\M(\varphi)$ is a function $\M(\tvar{x})\to [0,1]$.
\end{notation}

\subsection*{Satisfaction} As is now traditional in continuous logic, we think of 0 as nominally true. So if $\sigma(\tvar{x})$ is an $\Ell$-formula,
%=[\varphi(\tvar{x})\leq\varepsilon]$ is a statement, 
$\M$ is an $\Ell$-structure, and $\overline{a}$ is a tuple of elements of $\M$ from sorts matching $\DOM(\tvar{x})$, then we write $\M\models\sigma(\overline{a})$ if $\M(\varphi(\tvar{x}))(\overline{a}) = 0$. 

We will write $\M\models\sigma(\tvar{x})$ to mean $\M\models\sigma(\overline{a})$ for every $\overline{a}\in\M$; notice that this is equivalent to $\M \models \sup_\tvar{x} \sigma(\tvar{x})$.

The function $x \dotminus y := \max\{x - y, 0 \}$ is often useful for formally expressing formulas in our logic, we will adopt some shorthand notation to make things more readable.  For instance, $\M \models \varphi \leq r$ will mean $\M \models \varphi \dotminus r$ and we make a similar convention for $\geq$.  
%If $\M:\Ell\to\Set$ is a structure and $\Sigma\Implies\Psi$ is a sequent, all of whose free-variables are among the tuple $\tvar{x}$, then we write $\M\models (\Sigma\Implies\Psi)$ if and only if for every $\overline{x}\in\M$, if $\M\models \varphi(\overline{x})$ for every statement $\varphi(\tvar{x})\in\Sigma$, then $\M\models \psi(\overline{x})$ for some $\psi(\tvar{x})\in\Psi$. If $T$ is a theory, then we write $T\models (\Sigma\Implies\Psi)$ if and only if in any model $\M$ in which all sequents in $T$ are true, $\M\models(\Sigma\Implies\Psi)$.}

If $\M,\N:\Ell\to\Set$ are structures, then a map $h:\M\to\N$ is a collection $\{h_\SS:\SS\text{ is a sort of }\Ell\}$ of functions $h_\SS:\M(\SS)\to\N(\SS)$ which preserve the values of all function and relation symbols of $\Ell$; $h$ is elementary if it preserves the value of all formulas. The class of all structures $\M:\Ell\to\Set$ which are models of a theory $T$ will be denoted $\Mod(\Ell, T)$. We view $\Mod(\Ell, T)$ as a category whose morphisms are elementary maps. 
%A model of $T$ will also be denoted by an arrow $\M:(\Ell, T)\to\Set$.

\subsection*{Metric Theories and Metric Structures}

\begin{definition}\label{def:pseudo-metric}
A formula $\varphi(\tvar{x},\tvar{y})$, where $\tvar{x}$ and $\tvar{y}$ are tuples of variables of the same length and sort, is a pseudo-metric for a theory $T$ if 
%for every $\varepsilon>0$,  
\begin{enumerate}
\item $T\models \varphi(\tvar{x},\tvar{x})=0$,
\item $T\models |\varphi(\tvar{x},\tvar{y})-\varphi(\tvar{y},\tvar{x})|=0$	, and
\item $T\models (\varphi(\tvar{x},\tvar{y})+\varphi(\tvar{y},\tvar{z}))\geq\varphi(\tvar{x},\tvar{z})$.
\end{enumerate}
\end{definition}

\begin{definition}\label{def:metric-language-and-theory}
A theory $T$ in a language $\Ell$ is a {\em metric theory} if  for every sort $S$ of $\Ell$ we associate a pseudo-metric $d_S(x,y)$ for the theory $T$, where $x$ and $y$ are variables of sort $S$. Moreover, for every function symbol $\ff$, and every relation symbol $\RR$ we have:
%{\bf I am worried again here because, first of all, the statements below are not sequents but really meta-sequents, and it seems to me that we need this information to be uniform i.e. we must specify a uniform continuity modulus which this doesn't seem to do.}
\begin{enumerate}
\item For every $\varepsilon>0$, there are $\delta_1,...,\delta_n>0$ such that 
\[
T\models\{\dd_{\var{x}_i}(\var{x}_i, \var{y}_i)<\delta_i:1\leq i\leq n\}\Implies \dd_\SS(\ff(\tvar{x}),\ff(\tvar{y}))\leq\varepsilon.
\]
\item For every $\varepsilon>0$, there are $\delta_1,...,\delta_n>0$,
\[
T\models \{\dd_{\var{x}_i}(\var{x}_i, \var{y}_i) <\delta_i:1\leq i\leq n\}\Implies |\RR(\tvar{x})-\RR(\tvar{y})|\leq\varepsilon.
\]
\end{enumerate}
\end{definition}
Of course formally by the expressions above we mean 
\[
T \models \max\{\{\dd_{\var{x_i}}(\var{x}_i,\var{y}_i) \geq \delta_i: i \leq n\}, \dd_\SS(\ff(\tvar{x}),\ff(\tvar{y}))\leq\varepsilon\}
\]
and something similar for relations but the sequent notation is more readable.

Let $T$ be a metric theory in a language $\Ell$ and $\M$ an $\Ell$-structure satisfying $T$. Then for every $\SS$, $\M(\SS)$ is a pseudo-metric space with pseudo-metric $\M(\dd_\SS)$. Moreover, the interpretation of every function and relation symbol in $\M$ is uniformly continuous with respect to the assigned metrics on the domain and range of said symbols. If $\M(\dd_\SS)$ is a complete {\em metric} on $\M(\SS)$ for every $\SS$, then $\M$ will be called a {\em model} of $T$. 

%{\bf I am not sure why we need the material from here until the end of the section}
%Now consider a term $t(\tvar{x})$ with values in the sort $\SS$. If $\rho(\tvar{x},\tvar{y})$ is a pseudo-metric on the domain of $t$, and $\rho'(\var{z},\var{w})$ is a pseudo-metric on  $\DOM(t(\tvar{x}))$, then $t(\tvar{x})$ is {\em compatible} with $\rho$ and $\rho'$ if and only if for every $\varepsilon>0$, there is a $\delta>0$ such that $$T\models [\rho(\tvar{x},\tvar{y})\leq\delta\Implies \rho'(t(\tvar{x}),t(\tvar{y}))\leq\varepsilon]$$ In any structure $\M:\Ell\to\Set$, the function $\M(t(\tvar{x}))$ will be uniformly continuous with respect to $\rho$ and $\rho'$. Likewise, we can say that a formula $\varphi(\tvar{x})$ is compatible with $\rho$ if and only if for every $\varepsilon>0$, there is a $\delta>0$ such that $$T\models [\rho(\tvar{x},\tvar{y})\leq\delta\Implies |\varphi(\tvar{x})-\varphi(\tvar{y})|\leq\varepsilon]$$ and in any structure $\M:\Ell\to\Set$, the function $\M(\varphi(\tvar{x})):\M(\DOM(\tvar{x}))\to[0,1]$ will be uniformly continuous with respect to the pseudo-metric $\rho$ on $\M(\DOM(\tvar{x}))$.

Suppose $T$ is a metric theory. If $\tvar{x}=\var{x}_1\var{x}_2\cdots\var{x}_n$ and $\tvar{y}=\var{y}_1\cdots\var{y}_n$ are two sequences of variables such that $\DOM(\var{x}_i)=\DOM(\var{y}_i)$ for every $i$, we define a pseudo-metric  $D$ on $\DOM(\tvar{x})$ as follows: 
\[
D(\tvar{x},\tvar{y}) := \max\{ \dd_{\var{x_i}}(\var{x}_i,\var{y}_i) : i \leq n\}.
\] 
We will write $D_\tvar{x}$ or $D_{\DOM(\tvar{x})}$ to mean that we are talking about the pseudo-metric on the tuple $\DOM(\tvar{x})$. There are many possible choices of metrics on $\DOM(\tvar{x})$. However, any two definable pseudo-metrics yield the same continuous functions.

\begin{theorem}\label{thm:formulas-are-uniformly-continuous} Suppose that $T$ is a metric theory and $M$ is a model of $T$.
If $\varphi(\tvar{x})$ is a formula of $\Ell$, then $\M(\varphi(\tvar{x}))$ is uniformly continuous with respect to $D_{\tvar{x}}$, and if $t(\tvar{x})$ is a term of $\Ell$, then $\M(t(\tvar{x}))$ is uniformly continuous with respect to $D_{\tvar{x}}$ and $D_{\DOM(t)}$.
\end{theorem}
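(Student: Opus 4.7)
The plan is to proceed by simultaneous induction on the complexity of terms and formulas of $\Ell$, using the two hypotheses from Definition \ref{def:metric-language-and-theory} at the base cases and routine closure properties of uniformly continuous functions at the inductive steps. Throughout, I would use the fact that the metric $D_{\tvar{x}}$ on a tuple is the max of the sort-metrics $\dd_{\var{x}_i}$, so uniform continuity with respect to $D_{\tvar{x}}$ reduces to giving a modulus of continuity coordinate by coordinate as in the formulation of Definition \ref{def:metric-language-and-theory}.

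For terms, I would handle the base case by noting that a variable $\var{x}$ is interpreted as a projection $\M(\tvar{x}) \to \M(\DOM(\var{x}))$, which is $1$-Lipschitz from $D_{\tvar{x}}$ to $\dd_{\DOM(\var{x})}$, and a constant is interpreted as a constant function (trivially uniformly continuous). For the inductive step, a term $\ff(t_1(\tvar{x}), \ldots, t_n(\tvar{x}))$ is the composition of $\M(\ff)$ with $(\M(t_1), \ldots, \M(t_n))$; by induction each $\M(t_i)$ is uniformly continuous, and by clause (1) of Definition \ref{def:metric-language-and-theory}, $\M(\ff)$ is uniformly continuous on $\M$ since $\M$ is a model of $T$, so the composition is uniformly continuous.

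For formulas, the atomic case $\RR(t_1, \ldots, t_n)$ is handled analogously using clause (2) of Definition \ref{def:metric-language-and-theory} together with the term case just established. For a connective $\uu$ corresponding to a continuous $u:[0,1]^n \to [0,1]$, I would use that $u$ is automatically uniformly continuous on the compact cube $[0,1]^n$, so $\M(\uu(\varphi_1, \ldots, \varphi_n)) = u \circ (\M(\varphi_1), \ldots, \M(\varphi_n))$ is uniformly continuous in $\tvar{x}$ by induction. For the quantifier case, given $\varphi(\tvar{x}, \tvar{y})$ uniformly continuous with modulus $\delta$ in the $\tvar{x}$-coordinates (with $\tvar{y}$ arbitrary), I would invoke the standard estimate
\[
\bigl| \sup_{\overline{b} \in \M(\tvar{y})} \M(\varphi)(\overline{a}, \overline{b}) - \sup_{\overline{b} \in \M(\tvar{y})} \M(\varphi)(\overline{a}', \overline{b}) \bigr| \leq \sup_{\overline{b} \in \M(\tvar{y})} \bigl| \M(\varphi)(\overline{a}, \overline{b}) - \M(\varphi)(\overline{a}', \overline{b}) \bigr|
\]
and similarly for $\inf$, so the same modulus $\delta$ witnesses uniform continuity of $\M(\forall_{\tvar{y}}\varphi)$ and $\M(\exists_{\tvar{y}}\varphi)$ in $\tvar{x}$.

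The main subtlety, and the only place requiring real care, is the bookkeeping in the quantifier step: one must confirm that uniform continuity of $\M(\varphi(\tvar{x},\tvar{y}))$ with respect to $D_{\tvar{x}\tvar{y}} = \max\{D_{\tvar{x}}, D_{\tvar{y}}\}$ implies, in particular, uniform continuity separately in $\tvar{x}$ with a modulus independent of $\tvar{y}$, which is exactly what the max-metric gives. Everything else is a routine closure argument; the substantive input is that clauses (1) and (2) of Definition \ref{def:metric-language-and-theory} hold in $\M$ because $\M \models T$, which is what lets the induction get off the ground.
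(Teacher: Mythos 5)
Your proof is correct, and it is the standard argument: the paper states this theorem without giving a proof (deferring to the continuous-logic literature it cites), and your simultaneous induction on terms and formulas --- with clauses (1) and (2) of Definition \ref{def:metric-language-and-theory} as the base input, uniform continuity of connectives from compactness of $[0,1]^n$, and the estimate $\bigl|\sup_{\overline{b}} f(\overline{a},\overline{b}) - \sup_{\overline{b}} f(\overline{a}',\overline{b})\bigr| \leq \sup_{\overline{b}} \bigl|f(\overline{a},\overline{b}) - f(\overline{a}',\overline{b})\bigr|$ for the quantifier step --- is exactly the intended argument. You also correctly identify the one point needing care, namely that the max-metric $D_{\tvar{x}\tvar{y}}$ yields a modulus in $\tvar{x}$ uniform over $\tvar{y}$.
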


\begin{definition}\mbox{}
\begin{enumerate}
\item By $\Metric$ we will denote the category of metric spaces with metrics bounded by 1 and with uniformly continuous maps as morphisms.
\item A metric model of $T$ will be denoted by an arrow $\M:(\Ell, T)\to\Metric$, and the underlying structure will be denoted by an arrow $\M:\Ell\to\Set$. The category of metric models $\M:(\Ell, T)\to\Metric$ will be denoted $\Mod^*(\Ell, T)$; note that the notion of elementary map does not formally change but now must respect the specified metrics on the sorts.
\end{enumerate}
\end{definition}

\begin{theorem}[\cite{continuous-first-order-logic}, Prop. 2.10)]
\label{thm:completion-of-metric-pre-structure}Let $(\Ell, T)$ be a metric theory, and suppose $\M:\Ell\to\Set$ is a model of $T$. Then there is a model $\M^*:(\Ell, T)\to\Metric$ such that for every sentence $\sigma$ in $\Ell$, $\M\models \sigma$ if and only if $\M^*\models \sigma$. Furthermore, if $h:\M\to\N$ is an elementary map, then there is an elementary map $h^*:\M^*\to\N^*$. This defines an pair of adjoints $$\xymatrix{{\Mod(\Ell, T)\ar@/^{1pc}/[rr]^{(-)^*}}&{\perp}&{\Mod^*(\Ell, T)\ar@/^{1pc}/[ll]^F}}$$ where $F:\Mod^*(\Ell, T)\to\Mod(\Ell, T)$ represents the forgetful functor.
%
%
%The functor $P:\Mod(\Ell_\R, T_\R)\to\Mod^*(\Ell_\R, T_\R)$ thus defined is left adjoint to the forgetful functor  $F:\Mod^*(\Ell_\R, T_\R)\to\Mod(\Ell_\R, T_\R)$.
\end{theorem}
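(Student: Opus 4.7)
The plan is to construct $\M^*$ in two standard steps: first, quotient each sort by the kernel of its pseudo-metric to obtain a genuine metric space, and second, take the metric completion. The key point is that all of the structure on $\M$ must factor through these steps, which is where the metric axioms of Definition \ref{def:metric-language-and-theory} are used.

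First I would, for each sort $\SS$, define $\sim_\SS$ on $\M(\SS)$ by $a\sim_\SS b$ iff $\M(\dd_\SS)(a,b)=0$; by Definition \ref{def:pseudo-metric} this is an equivalence relation, and $\dd_\SS$ descends to a genuine metric on the quotient $\M(\SS)/\sim_\SS$. Next, for each function symbol $\ff:\SS_1\times\cdots\times\SS_n\to\SS$, the uniform-continuity axiom implies that if $a_i\sim_{\SS_i} b_i$ for all $i$, then $\M(\ff)(\bar a)\sim_\SS \M(\ff)(\bar b)$, so $\M(\ff)$ descends to a uniformly continuous map between the quotient metric spaces. Likewise every relation $\RR$ descends to a uniformly continuous $[0,1]$-valued function on the quotient. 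We then take the metric completion of each quotient sort; since uniformly continuous maps between metric spaces extend uniquely to their completions, every $\M(\ff)$ and $\M(\RR)$ extends canonically, yielding a structure $\M^*:\Ell\to\Metric$ whose sorts are complete metric spaces bounded by $1$ (after, if necessary, truncating the pseudo-metrics via $\dd_\SS\wedge 1$, which Theorem \ref{thm:formulas-are-uniformly-continuous} permits).

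The central calculation is that $\M$ and $\M^*$ agree on all formulas. I would prove this by induction on formula complexity. For terms and atomic formulas it follows directly from the construction and from continuity of $\M(\ff)$ and $\M(\RR)$, since the image of $\M(\SS)$ is dense in $\M^*(\SS)$. The connective case is immediate by continuity of the connective $u:[0,1]^n\to[0,1]$. The quantifier case is where density is essential: $\sup$ and $\inf$ of a uniformly continuous function on a metric space agree with $\sup$ and $\inf$ on any dense subset, so $\M(\sup_{\tvar y}\varphi)(\bar a) = \M^*(\sup_{\tvar y}\varphi)(\bar a)$ for $\bar a$ in the image of $\M$, and by density on all of $\M^*$. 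In particular $\M\models\sigma$ iff $\M^*\models\sigma$ for sentences.

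For functoriality and the adjunction, if $h:\M\to\N$ is elementary then it preserves the pseudo-metric sort by sort, hence descends to the quotients and extends by uniform continuity to a map $h^*:\M^*\to\N^*$ which still preserves all formula values; this gives the functor $(-)^*:\Mod(\Ell,T)\to\Mod^*(\Ell,T)$. The forgetful functor $F$ in the other direction is obvious. To exhibit the adjunction, I would verify the unit $\eta_\M:\M\to F\M^*$ (the canonical map into the completion) is universal: any elementary map $\M\to F\N$ for a metric model $\N$ must identify pseudo-metrically equivalent points (since $F\N$ has a true metric) and must take Cauchy sequences to Cauchy sequences (since it is uniformly continuous on each sort), so it factors uniquely through $\M^*$.

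The step most likely to demand care is the quantifier step of the induction, because I must argue that the $\sup$/$\inf$ over $\M^*(\tvar y)$ is attained in the limit by tuples from the dense image of $\M(\tvar y)$. This is where uniform continuity of $\M^*(\varphi(\tvar x,\tvar y))$ in $\tvar y$ (provided by Theorem \ref{thm:formulas-are-uniformly-continuous}) is essential; without it one could only argue pointwise agreement and might lose the equality of suprema.
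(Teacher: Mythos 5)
Your proof is correct and is precisely the standard quotient-by-the-pseudo-metric-kernel-then-complete construction; the paper itself gives no proof, deferring entirely to Prop.~2.10 of the cited reference, and your argument matches that cited proof in all essentials (including the density argument for the quantifier step and the universal property of the completion for the adjunction). One minor remark: in the quantifier step, plain continuity of $\M^*(\varphi(\tvar{x},\tvar{y}))$ in $\tvar{y}$ already forces the supremum over the dense image to equal the supremum over the completion, so uniform continuity is needed only to carry the induction (i.e.\ to know $\M^*(\varphi)$ is the unique continuous extension), not for the equality of suprema itself.
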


%It is shown in \cite{continuous-first-order-logic} that every pre-model can be made into a model by first taking a quotient by the pseudo-metric, and taking the completion of the resulting metric space, all the while preserving the values of all function and relation symbols. 

%Note that even though the choice of metrics on $\DOM(\tvar{x})$ is not canonical, it is a well known fact of Continuous Logic that any two choices of definable metrics on tuples of sorts yield the same continuous functions.

\section{Continuous Syntactic Categories}

%{\bf Now we seem to have returned to R-valued logic - better to stick with [0,1]-valued and redefine CLA}

We now give the definition of continuous syntactic categories. The approach we take to logical categories in the continuous context is somewhat related to the concept of hyperdoctrine which is described for classical and intuitionistic logic in  \cite{adjointness-in-foundations} and \cite{hyperdoctrines}. We begin by describing a continuous analogue of boolean algebras, which we will later use to interpret $[0,1]$-valued formulas. For every $n\in\Nat$, consider the set $U_n$ of all continuous functions $u:[0,1]^n\to[0,1]$.  We consider a language which has, for every $n$, an $n$-ary function symbol $u$ for every $u \in U_n$.  For us, the basic model in this language is $[0,1]$ itself with the natural interpretation of each $u$.
A {\em continuous logical algebra} is an algebra that satisfies the equational theory in this language of the standard structure on $[0,1]$. The category of all continuous logical algebras together with homomorphisms between them will be denoted $\CLA$.
%The category of all continuous logical algebras with homomorphisms as maps will be denoted $\CLA$.  
We make a few observations and notational conventions when working with continuous logical algebras.

First of all, there is a natural lattice structure on a continuous logical algebra since among the functions in $U_2$ one has $\min$ and $\max$.  This is part of the equational theory of $[0,1]$.  We will use the order notation to denote the lattice ordering so $x \leq y$ means $x = \min(x,y)$ for instance. When $n = 0$, all $u \in U_0$ are just constants or names for the elements of $[0,1]$.
%; in this way we see that the standard structure on $[0,1]$ is a subalgebra of any continuous logical algebra.  
The ordering induced on these constants is exactly the same as in $[0,1]$ and in fact, 0 is the least element of our lattice and 1 is the maximal element.  Since every continuous logical algebra will interpret the constant $\varepsilon$ for every $\varepsilon \in [0,1]$, we will say that a continuous logical algebra $A$ is {\em standard} if whenever $a \in A$ and $a \leq \varepsilon$ for every $\epsilon > 0$ then $a = 0$.

The most important use of the concept of continuous logical algebra is the following. Suppose that $\M$ is a $[0,1]$-valued structure and $\SS$ is a sort.  Consider $\Ell^\M(\tvar{x}))$ to be the collection of all the $[0,1]$-valued interpretations of formulas $\varphi(\tvar{x})$ where $\tvar{x}$ is a variable of sort $\SS$.  Every continuous connective $u \in U_n$ has a natural interpretation on this set via composition and so it remains to see that this algebra is a continuous logical algebra.  Every $\M(\varphi(\tvar{x}))\in \Ell^\M(\tvar{x}))$ is a function from $\M(\SS )$ to $[0,1]$ and so can be thought of as an element of $[0,1]^{\M(\SS)}$ which is a product of the continuous logical algebra structure on $[0,1]$.  It is easy to see then that $\Ell^\M(\tvar{x}))$ is a subalgebra of this product and hence a continuous logical algebra.

%\begin{enumerate}
%\item $(F,\leq)$ is a lattice with top and bottom element;
%\item For every $u\in U_n$, $\ff_u:F^n\to F$ is a continuous function;
%%\item For every $r\in[0,1]$, $\cc_r\in F$ is an element.% and $\cc_r\leq\cc_s$ if and only if $r\leq s$.
%\end{enumerate}
%
%The standard structure for continuous logical algebras is the structure $[0,1]$ itself, with the obvious interpretation of all the functions, constants, and the ordering. We will denote by $\CLA$ the category of all those continuous logical algebras which satisfy the universal theory of $[0,1]$, together with order-continuous homomorphisms between them. 

\subsection*{Quantification}

In the 1960's, F.W. Lawvere realized that quantification in first-order logic can be understood in purely categorical terms using adjoint functors. More specifically, if we denote by $F(\tvar{x})$ the set of all formulas in a language $\Ell$ whose free variables are among those listed in the tuple $\tvar{x}$, then $F(\tvar{x})$ is partially ordered by entailment, i.e. $\varphi(\tvar{x})\leq\psi(\tvar{x})$ if and only if $\varphi(\tvar{x})\proves\psi(\tvar{x})$. The inclusion map $\iota:F(\tvar{x})\to F(\tvar{x}\tvar{y})$, viewed as a functor, has both a left and a right adjoint: $\exists_{\tvar{y}}\dashv\iota\dashv\forall_\tvar{y}$. The adjunction properties mirror the natural deduction rules for introducing and eliminating the quantifiers. 

In the continuous case, we denote by $\Lang(\tvar{x})$ the set of all formulas whose free variables are among $\tvar{x}$. 
%This set is partially ordered by the relation $\varphi(\tvar{x})\leq\psi(\tvar{x})$ if and only if $T\models\psi(\tvar{x})\leq\varphi(\tvar{x})$.
For any set of variables $\tvar{x}$, we have the continuous logical algebra $\Lang^\M(\tvar{x})=\{\M(\varphi(\tvar{x})):\varphi\in\Lang(\tvar{x})\}$ and we can consider it a simple category with arrows given by the lattice ordering. Suppose that $\iota^\M:\Lang^\M(\tvar{x})\to\Lang^\M(\tvar{x}\tvar{y})$ is the inclusion map and note that the continuous quantifiers  $\forall_\tvar{y}^\M$ and  $\exists_\tvar{y}^\M$ define maps from
$\Lang^\M(\tvar{x}\tvar{y})\to\Lang^\M(\tvar{x})$.
% via 
% \[
% \forall_{\tvar{y}}(\M(\varphi(\tvar{x})))(\overline{x})=\sup\{\M(\varphi(\tvar{x})))(\overline{x},\overline{y}):\overline{y}\in \M(\tvar{y})\}
% \]
%  and 
%  \[
%  \exists_{\tvar{y}}(\M(\varphi(\tvar{x})))(\overline{x})=\inf\{\M(\varphi(\tvar{x})))(\overline{x},\overline{y}):\overline{y}\in \M(\tvar{y})\}.
%  \] 
  Then for every $\M$, we get an adjunction $\forall^\M_\tvar{y}\dashv\iota^\M\dashv\exists^\M_\tvar{y}$. Note how the left and the right adjoints are reversed because of the convention that $0$ is true.  This observation was made in \cite{continuous-first-order-logic} and justifies the definition of the quantifiers in the continuous context.
%In the case of continuous logic, the set $R_S(n)$ defined above is replaced by the set $C_S(n)$ of all functions $f:S^n\to [0,1]$, which firs the interpretation of a relation symbol for a continuous language. To account or the fact that continuous logic uses $0$ to represent true, the relation $\subseteq$ is to be replaced by $\geq$. Also, we define $$\forall_{x_{n+1}}(f)(x_1,...,x_n)=\sup\{f(x_1,...,x_n,y):y\in S\}$$ and $$\exists_{x_{n+1}}(f)(x_1,...,x_n)=\inf\{f(x_1,...,x_n,y):y\in S\}$$ With these definitions, we see that if $f\in C_S(n+1)$, then $f\geq I_\pi(g)$ if and only if $\exists_{x_{n+1}}(f)\geq g$ and $I_\pi(g)\geq f$ if and only if $g\geq \forall_{x_{n+1}}(f)$. The similarity between $\sup$ and $\inf$ in the previous sentence justifies their use in the interpretation of quantified statements in continuous logic: 
%If $\varphi(\tvar{x},\tvar{y})$ is a formula, where $\tvar{x}$ is a finite tuple of variables, then we define $$\M(\forall_{\tvar{x}}[\varphi(\tvar{x},\tvar{y})])=\forall^\M_\tvar{x}(\M(\varphi(\tvar{x},\tvar{y})))=\sup\left\{\M(\varphi)(\overline{x}, \overline{y})):\overline{x}\in \M(\tvar{x})\right\}$$ and $$\M(\exists_{\tvar{x}}[\varphi(\tvar{x},\tvar{y})])=\exists^\M_\tvar{x}(\M(\varphi(\tvar{x},\tvar{y})))=\inf\left\{\M(\varphi)(\overline{x}, \overline{y})):\overline{x}\in \M(\tvar{x})\right\}$$ %In general, we will write $\M(t(\tvar{x}))$ for the interpretation of a term $t(\tvar{x})$ in $\M$, and $\M(\varphi(\tvar{x}))$ for the interpretation of the formula $\varphi(\tvar{x})$. 

\begin{definition}
%\begin{enumerate}
%\item In a category $\C$ closed under products, we call an object $\Omega$ a $[0,1]$-object if for each $n \in N$ and $u \in U_n$, there is $f_u:\Omega_n \to \Omega$ such that the subcategory of $\C$ formed by $\Omega$, its chosen powers and all the $f_u$'s is isomorphic to the full subcategory of $\Metric$ generated by $[0,1]$ and all its powers.
A {\em continuous syntactic category} is a 
category $\C$  with all finite products and a contravariant functor $\Lang:\C \to \CLA$.  $\Lang$ satisfies the following:
\begin{enumerate}
\item If 1 is the empty product then $\Lang(1)$ is a standard continuous logical algebra, and
\item for every $A,B\in\C$, and projection map $\pi:A\times B\to B$, the map 
\[
\Lang(\pi):\Lang(B)\to\Lang(A\times B)
\]
 is a continuous logical algebra embedding.
Moreover, $\Lang(\pi)$ has a left adjoint $\forall_\pi:\Lang(A\times B) \to \Lang(B)$ which preserves the constants and the order. That is, for every $g \in \Lang(A\times B)$ and $h \in \Lang(B)$
\[
\forall_\pi(g)  \leq h \mbox{ iff } g \leq  \Lang(\pi)(h).
\]
\end{enumerate}
%\end{enumerate}
%We will use $\C$ to refer to both the structure $(\C, 1, \Lang)$ and the underlying category.
\end{definition}

\begin{remark}As a justification for calling the above map $\forall_\pi$ an adjoint, we think of a continuous logical algebra as a category with the algebras elements as objects and a unique arrow between $x$ and $y$ iff $x \leq y$.  With this identification, $\forall_\pi$ is a functor which is a left adjoint to the functor $\Lang(\pi)$.

We note that we also have a right adjoint when $\pi$ is as above and $\forall_\pi$ is the left adjoint.  Define $\exists_\pi(g) := 1 \dotminus \forall_\pi (1 \dotminus g)$ and it is easy to show that
\[
g \leq \exists_\pi(h) \mbox{ iff } \Lang_\pi(g) \leq  h
\]
%To clarify the continuous logical algebra structure on $\Lang(A)$ above, for every $u \in U_n$, we will interpret $u$ on $\Lang(A)$ by composition with $f_u$ from the definition of $\Omega$ as a $[0,1]$-object.  That is, if $g_1,\ldots,g_n \in \Lang(A)$ then by $u(g_1,\ldots,g_n)$ we will mean $f_u(g_1,\ldots,g_n)$.  
\end{remark}

An example is in order.  $\Metric$ can be  considered a continuous syntactic category.  $\Lang(X)$ is the continuous logical algebra of uniformly continuous functions from $X$ to $[0,1]$; it is clear that all functions from a one point set to $[0,1]$ is isomorphic to $[0,1]$ so we have $\Lang(1) \cong [0,1]$.  If $\pi:X\times Y \to Y$ in $\Metric$ is the projection map onto $Y$ then $\Lang(\pi)$ is just composition by $\pi$ and it is easily seen to be an continuous logical embedding. Define $\forall_\pi(g)$ for any $g \in \Lang(X)$ by:
\[ \forall_\pi(g)(y) = \sup\{ g(a) : \pi(a) = y\} \]
for all $y \in Y$.  One checks that $\forall_\pi(g) \leq h$ iff $g \leq h \circ \pi$ which shows that $\forall_\pi$ is the left adjoint.  

A continuous syntactic category has enough structure to interpret a continuous language. Let $\Ell$ be a continuous language, and $\C$ be a continuous syntactic category. A {\em $\C$-structure $\M$ of type $\Ell$}, which we will denote by $\M:\Ell\to\C$,  consists of the following data:
\begin{enumerate}
\item For every sort symbol $\SS$ of $\Ell$, an object $\M(\SS)$ of $\C$;
\item For every relation symbol $\RR\subseteq\SS_1\times\cdots\times\SS_n$, $\M(\RR)$ is an element of $\Lang(\M(\SS_1)\times\cdots\times\M(\SS_n))$.
\item For every function symbol $\ff:\SS_1\times\cdots\times\SS_n\to\SS$, $\M(\ff)$ is a morphism $\M(\SS_1)\times\cdots\times\M(\SS_n)\to \M(\SS)$
\end{enumerate}

In order to simplify the notation, as before, if $\tvar{x}$ is a finite tuple of variables, we write $\M(\tvar{x})$ instead of $\prod \M(\DOM(\var{x}_i))$. 

\paragraph{\bf Terms} We can provide interpretations for terms in $\Ell$ as follows: 
\begin{enumerate}
\item If $\var{x}$ is a variable, then $\M(\var{x})=Id_{\M(\DOM(\SS))}:\M(\DOM(\var{x}))\to\M(\DOM(\var{x}))$. 
\item If $\ff(\var{x}_1,...,\var{x}_n)$ is a function symbol of type $\SS$, and for every $i$, $s_i$ is a term of sort $\DOM(\var{x}_i)$, then $$\M(f(s_1,...,s_n))=\M(\ff)(\M(s_1),...,\M(s_n))$$
\end{enumerate}

\paragraph{\bf Atomic Formulas} 
%First note that by definition $\Lang(1)=[0,1]$, so that we may interpret the formula $r$ by $r$ itself. 
If $\varphi(\tvar{x})=\RR(t_1(\tvar{x}_1),...,t_n(\tvar{x}_n))$, then by definition $\M(\RR)\in\Lang(\tvar{y})$, and $\M(t_i(\tvar{x}_i)):\M(\tvar{x}_i)\to\M(\var{y}_i)$ for every $i$, so that we get the tuple 
\[
(\M(t_1(\tvar{x}_1)),...,\M(t_n(\tvar{x}_n))): \prod_i\M(\tvar{x_i})\to\M(\tvar{y}).
\]
 and so
 \[
 \Lang(\M(t_1(\tvar{x}_1)),...,\M(t_n(\tvar{x}_n))):\Lang(\M(\tvar{y})) \to \Lang(\prod_i\M(\tvar{x_i}))
 \]
 we may therefore define 
 \[
 \M(\RR(t_1(\tvar{x}_1),...,t_n(\tvar{x}_n)))=\Lang(\M(t_1(\tvar{x}_1)),...,\M(t_n(\tvar{x}_n)))(\M(\RR)).
 \]

\paragraph{\bf Connectives} Suppose that $\varphi_i(\tvar{x})$ is a formula for every $i \leq n$ and $\uu$ is an $n$-ary connective corresponding to the continuous function $\uu:[0,1]^n\to[0,1]$. We have that $\M(\varphi_i(\tvar{x}) \in \Lang(\tvar{x})$ for every $i$ and so we define 
\[
\M(\uu(\varphi_i(\tvar{x}):i\leq n))=\uu(\M(\varphi_i(\tvar{x})):i\leq n).
\]
Notice that $\uu$ on the right-hand side of this equation is the interpretation of $\uu$ in the continuous logical algebra $\Lang(\tvar{x})$.

\paragraph{\bf Quantifiers} Finally, if $\varphi(\tvar{x},\tvar{y})$ is a formula, where $\tvar{x}$ and $\tvar{y}$ are disjoint {\em finite} tuples of variables, consider the projection map $\pi := \pi_\tvar{y}:\M(\tvar{x}\tvar{y})\to\M(\tvar{y})$; remember that $\M(\tvar{x}\tvar{y}) = \M(\tvar{x}) \times \M(\tvar{y})$.
%induces the map $f := \Lang(\pi_\tvar{y}):\Lang(\tvar{y})\to\Lang(\tvar{x}\tvar{y})$. %For every $b\in \Lang(\tvar{x}\tvar{y})$, we have the sets $$E(b)=\{a\in \Lang(\tvar{x}):\Lang(\pi)(a)\leq b\}\subseteq\Lang(\tvar{x})$$ and $$A(b)=\{a\in \Lang(\tvar{x}):\Lang(\pi)(a)\geq b\}\subseteq\Lang(\tvar{x}).$$ 
By definition $\M(\varphi(\tvar{x},\tvar{y}))$ is an element of $\Lang(\tvar{x}\tvar{y})$, and so we define 
\[
\M(\forall_{\tvar{x}}(\varphi(\tvar{x},\tvar{y})))=\forall_\pi(\M(\varphi(\tvar{x}, \tvar{y})))
\]
 and for convenience
 \[
 \M(\exists_{\tvar{x}}(\varphi(\tvar{x},\tvar{y})))=\exists_\pi(\M(\varphi(\tvar{x}, \tvar{y})))
 \]
 % provided the $\sup$ and $\inf$  on the right hand side exists.

% \begin{definition}
% We will say that $\C$ is a {\em logical category} if and only if for every $A,B\in C$, if $\pi:A\times B\to B$ represents the canonical projection map, for any $b\in\Lang(A\times B)$, $$\inf\{a\in\Lang(B):\Lang(\pi)(a)\geq b\}$$ and $$\sup\{a\in\Lang(B):\Lang(\pi)(a)\leq b\}$$ exist in $\Lang(B)$.
% \end{definition}

%The main example of continuous logical categories we will encounter in this paper are the category of sets ($\Set$), the category of metric spaces $(\Metric)$, and the category of definable sets of $(\Ell, T)$ ($\DEF(\Ell, T)$), in $\Set$ and $\Metric$, the functor $\Lang$ is given by $\Lang(S)=\hom(S, [0,1])$. It is easy to see that $\Set$-structure of type $\Ell$ is exactly the same as a structure $\M:\Ell\to\Set$ as per the definition given in Section \ref{sec:continuous-languages-and-structures}.

\subsection*{Truth and Satisfaction in Continuous Syntactic Categories}

If $\C$ is a continuous syntactic category, and $\M:\Ell\to\C$ is a structure then 
for any formula $\varphi(\tvar{x})$ we will write $\M \models \varphi(\tvar{x})$ iff $\M(\varphi(\tvar{x})) = 0$ in $\Lang^\M(\tvar{x})$. 
We similarly adopt our notation regarding the use of inequalities and sequents from earlier.

\begin{definition}\mbox{}
\begin{enumerate}
\item For an $\Ell$-theory $T$, a $\C$-model is a structure $\M:\Ell\to\C$ with the property that $\M\models \varphi$ whenever $\varphi \in T$. A model of $T$ in $\C$ will be denoted by an arrow $\M:(\Ell, T)\to \C$. 
\item If $\M$ and $\N$ are both models of $T$ in $\C$ a collection 
\[
f =  \{ f_S:\M(S) \to \N(S) : \mbox{ $S$ is a sort in $\Ell$}\}
\]
 is an elementary map from $\M$ to $\N$ if for every formula $\varphi(\tvar{x})$,
\[
\M(\varphi(\tvar{x})) = \Lang(f_S)(\N(\varphi(\tvar{x})))
\]
where $S$ is the sort of the variables $\tvar{x}$.
\item The category of all $\C$-models of $T$ is denoted $\Mod_\C(\Ell, T)$ with elementary maps as arrows.  
\end{enumerate}
\end{definition}

If $T$ is an $\Ell$-theory, then for any formula $\varphi(\tvar{x})$, we write $T\models_\C\varphi(\tvar{x})$ to mean that every structure $\M:\Ell\to\C$ which satisfies $T$ also satisfies $\varphi(\tvar{x})$.

\section{Definability and the Category of Definable Sets}

%{\bf It seems that in the first few paragraphs of this section you rescind everything that has been done in the second section i.e. the logic becomes bounded, we are happy to talk about definable predicates.  Frankly, I am happier with this approach than the former.  Make everything finitary and [0,1]-valued}
For the remainder of this paper, we concentrate on metric languages and theories, and fix such an $(\Ell, T)$. Definition \ref{def:quantification-over-conceptually-definable-sets} is very general, and says that a set is definable if and only if it is possible to quantify over the elements of that set. One of the main obstacles in porting the notions of categorical logic to the continuous framework is that continuous logic lacks the correspondence between definable sets and formulas. 
%For example, closed unit balls in {\em non-archimedean} Banach spaces, which can be described as zero-sets of formulas of the form $\dd(\var{x}, 0)$, are not definable in the sense of Definition \ref{def:quantification-over-conceptually-definable-sets}. 
An important tool for the remainder of this paper is Theorem \ref{thm:syntactic-definability}, which establishes a syntactic criterion for determining whether a set is definable. 

\subsection*{Definable Sets and Definable Functions}
Suppose that $T$ is a metric theory, $I$ is a finite set, $(\SS_i:i\in I)$ is an $I$-indexed list of sorts of $\Ell$ and $\tvar{x}$ is an $I$-indexed sequence of variables $\DOM(\var{x}_i) = \SS_i$ for every $i$ . Remember that if $\varphi(\tvar{x})$ is a formula and $M$ is a model of $T$ then $\M(\varphi(\tvar{x})):\prod_{i\in I}\M(\SS_i)\to [0,1]$.  We say that a sequence of formulas $f = \langle \varphi_n(\tvar{x} : n \in \Nat \rangle$ is a $T$-definable predicate if for every model of $T$,
$\M:\Ell\to\Metric$  the sequence of functions $\M(\varphi_n(\tvar{x}))$ converges uniformly to a function $\M(f):\prod_{i\in I}\M(\SS_i)\to [0,1]$. The fact that the sequence $\M(\varphi_n(\tvar{x}))$ is uniform can easily be expressed by first-order statements. The interpretation of a formula is always a definable predicate.
%and if $I$ is finite, then a definable predicate is always equal to the interpretation of a formula, and we will see later that infinitary definable predicates can be considered to be formulas as well. 

\begin{definition} For a metric theory $T$ and a $T$-definable predicate $f$, \\
if $\M(f):\prod_{i\in I}\M(\SS_i)\to[0,1]$ then the {\em zero-set} of $f$ is the set 
\[
Z_\M(f)=\{\overline{x}\in\prod_{i\in I}\M(\SS_i):f(\overline{x})=0\}.
\]
\end{definition}

%Definable sets are where Continuous Logic and classical First-Order Logic differ the most. While every formula in a classical first-order language correspond to definable sets, the analogous concept of zero-set described in the previous paragraph cannot. 

\begin{definition}\label{def:quantification-over-conceptually-definable-sets}
\label{prop:quantification-over-definable-sets} For a metric theory $T$,
suppose $\tvar{x}$ and $\tvar{y}$ are finite tuples of variables and $\psi(\tvar{x})$ is a $T$-definable predicate. Then $\psi(\tvar{x})$ is a $T$-definable set if and only if for every formula $\varphi(\tvar{x}, \tvar{y})$, there are $\Ell$-formulas $\exists_{\psi(\tvar{x})=0}[\varphi(\tvar{x}, \tvar{y})]$ and $\forall_{\psi(\tvar{x})=0}[\varphi(\tvar{x}, \tvar{y})]$ such that, in any model $\M:(\Ell, T)\to\Metric$. 
\begin{enumerate}
\item $\M(\exists_{\psi(\tvar{x})=0}[\varphi(\tvar{x}, \tvar{y})])=\inf\{\varphi(\overline{x}, \overline{y}):\overline{x}\in Z_\M(\psi(\tvar{x}))\}$
\item $\M(\forall_{\psi(\tvar{x})=0}[\varphi(\tvar{x}, \tvar{y})])=\sup\{\varphi(\overline{x}, \overline{y}):\overline{x}\in Z_\M(\psi(\tvar{x}))\}$
\end{enumerate}
\end{definition}

% If $\Ell$ is a metric language, and $T$ is a metric theory, then Definition \ref{def:quantification-over-conceptually-definable-sets} s equivalent to the following:

% \begin{definition}\label{def:T-definable-set}
% Let $\tvar{x}$ be a finite tuple of variables.
% A {\em $T$-definable set} is a formula $A(\tvar{x})$, such that the predicate  $$f(\overline{x})=\inf\{\M(D_\tvar{x})(\overline{x}, \overline{y}):\overline{y}\in Z_\M(A(\tvar{x}))\}$$ is a definable in any model $\M:(\Ell, T)\to\Metric$.
% \end{definition}

%We will further assume that the theory is {\em bounded}, in the sense that there is a $K\in[0,1]$ such that $T\models[\emptyset\Implies \{\varphi(\tvar{x})\leq K\}]$ for every formula $\varphi(\tvar{x})$. Bounded metric theories satisfy the compactness theorem.%The proof of Theorems \ref{thm:definable-if-bounded-by-formula} and \ref{thm:syntactic-definability} can be found in \cite{definability-of-groups}, and will not be repeated here.

%{\bf There should be references here.  As much as possible we should remove sequents that quantify over $\varepsilon$ and remove the empty sequent - I did this in various places.}
\begin{lemma}[Proposition 9.19 in \cite{model-theory-for-metric-structures}]\label{thm:definable-if-bounded-by-formula}
Let $\tvar{x}$ be a finite tuple of variables. A definable predicate $A(\tvar{x})$ is a $T$-definable set if and only if there is a $T$-definable predicate $\varphi(\tvar{x})$ such that in every $\M\in\Mod^*(\Ell, T)$, we have $\varphi(\overline{x})=0$ for every $\overline{x}\in Z_\M(A(\tvar{x}))$, and $D(\overline{x}, A)\leq\varphi(\overline{x})$ for every $\overline{x}\in\M(\tvar{x})$.
\end{lemma}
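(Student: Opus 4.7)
My plan is to treat the two implications separately; the reverse direction carries the interesting work.

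For the forward direction, I will exploit the definable quantifier that comes for free from the assumption that $A(\tvar{x})$ is a $T$-definable set. Introducing a fresh tuple $\tvar{y}$ matching $\tvar{x}$ sort-by-sort, I apply Definition~\ref{def:quantification-over-conceptually-definable-sets} to the formula $D(\tvar{x}, \tvar{y})$ and set
\[
\varphi(\tvar{x}) \Def \exists_{A(\tvar{y})=0}[D(\tvar{x}, \tvar{y})].
\]
By that same definition, in any $\M \in \Mod^*(\Ell, T)$ this interprets as the distance $D(\overline{x}, Z_\M(A))$, so it vanishes on $Z_\M(A)$ and meets the distance bound with equality.

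For the reverse direction, given $\varphi$ with the stated properties, I must produce, for every formula $\psi(\tvar{x}, \tvar{y})$, two $T$-definable predicates that serve as the bounded quantifiers $\exists_{A(\tvar{x})=0}[\psi]$ and $\forall_{A(\tvar{x})=0}[\psi]$ demanded by Definition~\ref{def:quantification-over-conceptually-definable-sets}. The recipe is a penalty approximation: using the continuous unary connective $\min(1, n\cdot)$, set
\[
\psi_n(\tvar{y}) \Def \inf_\tvar{x}\bigl[ \psi(\tvar{x}, \tvar{y}) \vee \min(1, n\varphi(\tvar{x})) \bigr],
\]
and show that the sequence $(\psi_n)_{n \in \Nat}$ converges uniformly, in every model of $T$, to $\inf\{\psi(\overline{x}, \overline{y}) : \overline{x} \in Z_\M(A)\}$; this exhibits the latter as a $T$-definable predicate. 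The dual $\sup$-over-$Z_\M(A)$ is obtained by passing to $\sup_\tvar{x}\bigl[ \psi(\tvar{x}, \tvar{y}) \wedge (1 \dotminus \min(1, n\varphi(\tvar{x}))) \bigr]$.

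The main obstacle is the uniform-convergence estimate, and both halves of the hypothesis on $\varphi$ are essential for it. One direction is immediate: because $\varphi$ vanishes on $Z_\M(A)$, we get $\psi_n(\overline{y}) \leq \inf\{\psi(\overline{x}_0, \overline{y}) : \overline{x}_0 \in Z_\M(A)\}$ for every $n$. For the reverse estimate, given $\varepsilon > 0$, I will invoke the modulus of uniform continuity of $\psi$ in $\tvar{x}$ furnished by Theorem~\ref{thm:formulas-are-uniformly-continuous} to pick $\delta > 0$ and then take $n > 1/\delta$. Any near-optimizer $\overline{x}$ for $\psi_n(\overline{y})$ that lies below $1$ in value must satisfy $n\varphi(\overline{x}) < 1$, hence $D(\overline{x}, Z_\M(A)) \leq \varphi(\overline{x}) < 1/n < \delta$. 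Completeness of the metric on the sorts of $\M \in \Mod^*(\Ell, T)$ then supplies a genuine $\overline{x}' \in Z_\M(A)$ with $D(\overline{x}, \overline{x}') < \delta$, and uniform continuity forces $\psi(\overline{x}', \overline{y}) \leq \psi_n(\overline{y}) + 2\varepsilon$, completing the estimate. Two edge cases require brief separate treatment: the regime where $\psi_n(\overline{y})$ is already near $1$ (where the target infimum is also near $1$ and there is nothing to prove), and the possibility that $Z_\M(A) = \emptyset$ in some model (which, under the convention $\inf\emptyset = 1$, forces $\varphi \equiv 1$ by the bound $D(\cdot, A) \leq \varphi$, so $\psi_n \equiv 1$ matches the vacuous infimum automatically).
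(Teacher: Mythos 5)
The paper offers no proof of this lemma; it is imported as Proposition 9.19 of \cite{model-theory-for-metric-structures}, so there is nothing internal to compare against. Your argument is correct in substance and follows the standard route from that reference: the forward direction reads off $\varphi$ as the relativized distance predicate $\exists_{A(\tvar{y})=0}[D(\tvar{x},\tvar{y})]$, and the reverse direction realizes the relativized quantifiers as uniform limits of penalized unrestricted quantifiers, with the two hypotheses on $\varphi$ supplying exactly the two halves of the convergence estimate. Before this could be spliced into the paper, two repairs are needed. First, the paper's connective conventions are the reverse of the usual lattice ones ($\varphi\And\psi:=\max\{\varphi,\psi\}$ and $\varphi\Or\psi:=\min\{\varphi,\psi\}$, because $0$ is true), so your displayed $\psi(\tvar{x},\tvar{y})\vee\min(1,n\varphi(\tvar{x}))$ literally denotes a minimum and would force $\psi_n\equiv 0$ whenever $Z_\M(A)\neq\emptyset$; your surrounding estimates show you intend $\max\{\psi(\tvar{x},\tvar{y}),\min(1,n\varphi(\tvar{x}))\}$ (and dually a genuine $\min$ in the $\sup$ case), so write the $\max$ and $\min$ explicitly. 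Second, completeness of the sorts plays no role in producing $\overline{x}'$: from $D(\overline{x},Z_\M(A))\leq\varphi(\overline{x})<1/n<\delta$ the existence of $\overline{x}'\in Z_\M(A)$ with $D(\overline{x},\overline{x}')<\delta$ is just the definition of the infimum. You should instead spend a sentence on the point that the modulus $\delta$ furnished by Theorem \ref{thm:formulas-are-uniformly-continuous} can be chosen uniformly over all models of $T$, since that is what makes the convergence of $(\psi_n)$ uniform in the sense required of a $T$-definable predicate; and note that your construction yields definable predicates rather than literal $\Ell$-formulas for the relativized quantifiers, which is evidently what Definition \ref{def:quantification-over-conceptually-definable-sets} intends.
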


%\begin{theorem}[Proposition 1.8 in \cite{definability-of-groups}]\label{thm:syntactic-definability}
%Let $(\Ell, T)$ be a metric language, and let $\varphi(\tvar{x},\tvar{c})$ be a formula, and suppose
%\begin{enumerate}
%\item $T\models\{\forall_{\tvar{x}\tvar{y}}[\varphi(\tvar{x},\tvar{c})-\varphi(\tvar{y},\tvar{c})-\dd(\tvar{x},\tvar{y})]\leq\varepsilon\}$
%\item $T\models \{\forall_\tvar{x}\exists_\tvar{y}[\varphi(\tvar{y},\tvar{c})\And (\dd(\tvar{x},\tvar{y})-\varphi(\tvar{x},\tvar{c}))]\leq\varepsilon\}$
%\end{enumerate}
%for every $\varepsilon>0$. Then $Z(\varphi(\tvar{x},\tvar{c}))$ is definable, and in any model $\M$, $$\M(\varphi(\tvar{x},\tvar{c}))(\overline{x})=\M(\dd)(\overline{x}, Z_\M(\varphi(\tvar{x},\tvar{c})).$$ Moreover, quantification over the fibres of $\varphi(\tvar{x},\tvar{c})$ is uniform, in the sense that there exists a formula $\Psi(\tvar{y}, \tvar{c})$ such that in any model $\M$, and any formula $\psi(\tvar{x},\tvar{y},\tvar{c})$, then $$\sup\{\varphi(\overline{x},\overline{y},\overline{c}):\overline{x}\in Z_\M(\varphi(\tvar{x}, c)\}=\M(\Psi(\tvar{y}, c))$$ for every $\overline{c}$ satisfying clauses (1) and (2).
%\end{theorem}

\begin{theorem}[Theorem 9.2, \cite{model-theory-for-metric-structures}]\label{thm:syntactic-definability}
Let $(\Ell, T)$ be a metric language and $\varphi(\tvar{x})$ a formula.  Suppose 
\begin{enumerate}
\item $T \models \forall_\tvar{x}\exists_\tvar{y} \max\{ \varphi(\tvar{y}), |\varphi(\tvar{x}) - d(\tvar{x},\tvar{y})| \}$ and
\item $T \models \forall_\tvar{x} | \varphi(\tvar{x}) - \exists_\tvar{y}(\varphi(\tvar{y}) + d(\tvar{x},\tvar{y})) |$.
\end{enumerate}
Then for any model $\M$ of $T$, $\M(\varphi)(\bar x) = d(\bar x, D)$ where $D = \{ \bar x \in \M : \M(\varphi)(\bar x) = 0 \}$.  In particular, $\varphi(\tvar{x})$ is a $T$-definable set.
\end{theorem}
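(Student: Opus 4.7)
The plan is to prove the two inequalities $\M(\varphi)(\bar x) \leq d(\bar x, D)$ and $\M(\varphi)(\bar x) \geq d(\bar x, D)$ separately, and then invoke Lemma \ref{thm:definable-if-bounded-by-formula} to deduce $T$-definability. First I would unpack the hypotheses in a model $\M$. Condition (1) becomes: for every $\bar x$ and every $\epsilon > 0$, there is $\bar y$ with $\M(\varphi)(\bar y) < \epsilon$ and $|\M(\varphi)(\bar x) - d(\bar x, \bar y)| < \epsilon$; in particular $d(\bar x, \bar y) < \M(\varphi)(\bar x) + \epsilon$. Condition (2) becomes the identity $\M(\varphi)(\bar x) = \inf_{\bar y} (\M(\varphi)(\bar y) + d(\bar x, \bar y))$ for all $\bar x$.

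The easy direction is $\M(\varphi)(\bar x) \leq d(\bar x, D)$: restrict the infimum in (2) to $\bar y \in D$, where $\M(\varphi)(\bar y) = 0$, giving $\M(\varphi)(\bar x) \leq \inf_{\bar y \in D} d(\bar x, \bar y) = d(\bar x, D)$.

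The other direction is the crux and requires using (1) iteratively together with completeness of $\M$. Fix $\bar x = \bar x_0$ and $\epsilon > 0$. Apply (1) to $\bar x_n$ with tolerance $\epsilon/2^{n+2}$ to produce $\bar x_{n+1}$ satisfying $\M(\varphi)(\bar x_{n+1}) < \epsilon/2^{n+2}$ and $d(\bar x_n, \bar x_{n+1}) < \M(\varphi)(\bar x_n) + \epsilon/2^{n+2}$. An easy telescoping estimate using $\M(\varphi)(\bar x_n) < \epsilon/2^{n+1}$ for $n \geq 1$ bounds $\sum_n d(\bar x_n, \bar x_{n+1}) \leq \M(\varphi)(\bar x) + \epsilon$, so $(\bar x_n)$ is Cauchy; by completeness of each sort of $\M$ it converges to some $\bar x_\infty$ with $d(\bar x, \bar x_\infty) \leq \M(\varphi)(\bar x) + \epsilon$. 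Since $\M(\varphi)$ is uniformly continuous (Theorem \ref{thm:formulas-are-uniformly-continuous}) and $\M(\varphi)(\bar x_n) \to 0$, we get $\M(\varphi)(\bar x_\infty) = 0$, i.e.\ $\bar x_\infty \in D$. Thus $d(\bar x, D) \leq \M(\varphi)(\bar x) + \epsilon$, and letting $\epsilon \to 0$ yields the desired inequality. The main obstacle is this step: one needs to choose the tolerances carefully so the telescoping sum converges to something bounded by $\M(\varphi)(\bar x)$ rather than an uncontrolled multiple of it.

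Combining both inequalities gives $\M(\varphi)(\bar x) = d(\bar x, D)$ in every model. For the final clause, $\varphi$ itself is a definable predicate vanishing exactly on $D$, and since $d(\bar x, D) = \M(\varphi)(\bar x) \leq \M(\varphi)(\bar x)$ trivially, Lemma \ref{thm:definable-if-bounded-by-formula} applies with $A = \varphi$ bounded by itself, yielding that $\varphi(\tvar{x})$ is a $T$-definable set.
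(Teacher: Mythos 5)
Your proof is correct and is essentially the standard argument for this result: the paper itself gives no proof (it cites Theorem 9.2 of \cite{model-theory-for-metric-structures}), and your two inequalities --- the Lipschitz bound $\M(\varphi)(\bar x)\leq d(\bar x,D)$ from condition (2) and the iterated application of condition (1) with geometrically shrinking tolerances to build a Cauchy sequence converging into $D$ --- are exactly how the cited source proves it. The telescoping bookkeeping and the final appeal to Lemma \ref{thm:definable-if-bounded-by-formula} with $\varphi$ bounding itself are both sound.
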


\begin{definition}\label{def:syntactic-definability}
From now on, when we write that $\varphi(\tvar{x})$ is a $T$-definable set, we will mean that $\varphi(\tvar{x})$ satisfies the conditions of Theorem \ref{thm:syntactic-definability}, and thus defines the distance to its zero-set in any model.
\end{definition}

\begin{definition}\label{def:definable-function}
Let $A(\tvar{x})$ and $B(\tvar{y})$ be $T$-definable sets. A {\em $T$-definable function from $A(\tvar{x})$ to $B(\tvar{x})$} is a $T$-definable predicate $\alpha(\tvar{x},\tvar{y})$ such that $Z_\M(\alpha)$ is the graph of a function from $Z_\M(A)$ to $Z_\M(B)$ for all  models $\M\models T$.
%\begin{enumerate}
%\item For every $\varepsilon>0$, there are $\delta_1,\delta_2>0$ such that $$\M\models\{\alpha(\tvar{x},\tvar{y})\leq\delta_1, \alpha(\tvar{x},\tvar{z})\leq\delta_2\}\Implies \{\dd(\tvar{y}, \tvar{z})\leq\varepsilon\}$$
%\item For every $\varepsilon>0$, there are $\delta>0$ such that  $$\M\models\{A(\tvar{x})\leq\delta\}\Implies \{\exists_\tvar{y}[\alpha(\tvar{x}, \tvar{y})]\leq\varepsilon\}$$
%\item For every $\varepsilon>0$, there are $\delta_1,\delta_2>0$ such that  $$\M\models\{A(\tvar{x})\leq\delta_1, \alpha(\tvar{x},\tvar{y})\leq\delta_2\}\Implies \{B(\tvar{y})\leq\varepsilon\}$$
%\end{enumerate}
\end{definition}

\begin{theorem}\label{thm:continuity-of-definable-functions}
Let $\alpha(\tvar{x},\tvar{y})$ be a definable function $A(\tvar{x})\to B(\tvar{y})$, and let $f_\alpha$ be the induced function on a model $\M$ of $T$.  Then  $f_\alpha:Z_\M(A(\tvar{x}))\to Z_\M(B(\tvar{y}))$ is uniformly continuous with respect to the metric $D_\tvar{x}$ on $\M(\tvar{x})$ and $D_\tvar{y}$ on $\M(\tvar{y})$
\end{theorem}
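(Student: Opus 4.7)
The plan is to proceed by contradiction using an $\omega_1$-saturated elementary extension of $\M$, collapsing an apparent failure of uniform continuity into a genuine multivaluedness of $\alpha$ and thereby contradicting the hypothesis that $Z_\M(\alpha)$ is a graph.

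Suppose $f_\alpha$ is not uniformly continuous. Then there exist $\varepsilon > 0$ and sequences $(\overline{a}_n), (\overline{a}'_n)$ in $Z_\M(A)$ with $D_\tvar{x}(\overline{a}_n, \overline{a}'_n) < 1/n$ while $D_\tvar{y}(f_\alpha(\overline{a}_n), f_\alpha(\overline{a}'_n)) \geq \varepsilon$ for every $n$. I will pass to an $\omega_1$-saturated elementary extension $\N$ of $\M$ (equivalently, to an ultrapower of $\M$ along a nonprincipal ultrafilter on $\Nat$) and consider the partial type in $\tvar{x}, \tvar{x}', \tvar{y}, \tvar{y}'$ consisting of
\[
A(\tvar{x}) = 0,\quad A(\tvar{x}') = 0,\quad \alpha(\tvar{x}, \tvar{y}) = 0,\quad \alpha(\tvar{x}', \tvar{y}') = 0,
\]
together with $D_\tvar{x}(\tvar{x}, \tvar{x}') \leq 1/n$ for each $n \in \Nat$ and $D_\tvar{y}(\tvar{y}, \tvar{y}') \geq \varepsilon$. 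This type is finitely satisfied in $\M$ by the tuples $(\overline{a}_n, \overline{a}'_n, f_\alpha(\overline{a}_n), f_\alpha(\overline{a}'_n))$, so by saturation it is realized in $\N$ by some $(\overline{a}, \overline{a}', \overline{b}, \overline{b}')$. Because $\N$ is a metric model, $D_\tvar{x}(\overline{a}, \overline{a}') = 0$ forces $\overline{a} = \overline{a}'$, and then $\alpha(\overline{a}, \overline{b}) = 0 = \alpha(\overline{a}, \overline{b}')$ with $\overline{a} \in Z_\N(A)$ and $\overline{b}, \overline{b}' \in Z_\N(B)$. Since $Z_\N(\alpha)$ is the graph of a function from $Z_\N(A)$ to $Z_\N(B)$, we conclude $\overline{b} = \overline{b}'$, contradicting $D_\tvar{y}(\overline{b}, \overline{b}') \geq \varepsilon$.

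The main obstacle is really just invoking the appropriate saturated metric extension, which this paper has not explicitly developed; this can be circumvented by spelling out an ultrapower argument directly, using the continuous-logic ultraproduct theorem and the fact that the metric quotient of an ultrapower of metric models is again a metric model of $T$. An alternative avoiding saturation would be to represent the function $\overline{x} \mapsto D_\tvar{y}(\overline{y}, f_\alpha(\overline{x}))$ as a definable predicate via the $\exists_{B(\tvar{y}) = 0}$ quantifier from Definition \ref{def:quantification-over-conceptually-definable-sets} and derive uniform continuity of $f_\alpha$ from uniform continuity of that predicate, but this route appears to require a regularity hypothesis on $\alpha$ beyond what the bare definition of definable function supplies.
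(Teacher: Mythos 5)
The paper states Theorem \ref{thm:continuity-of-definable-functions} without proof (it is essentially Proposition 9.24 of \cite{model-theory-for-metric-structures} adapted to this setting), so there is no in-paper argument to compare yours against; judged on its own, your proof is correct and is the standard one. The key points all check out: the negation of uniform continuity is correctly unwound into sequences; the displayed collection of closed conditions is a bona fide partial type (each condition, including those involving the definable predicates $A$, $B$, $\alpha$, is a countable conjunction of conditions on formulas, so compactness and $\omega_1$-saturation apply); it is finitely satisfiable in $\M$ by the exhibited witnesses; and, crucially, Definition \ref{def:definable-function} requires $Z_\N(\alpha)$ to be the graph of a function in \emph{every} model $\N\models T$, so the functionality you invoke in the saturated extension is available by hypothesis rather than needing to be transferred from $\M$. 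The only external input is the existence of an $\omega_1$-saturated metric elementary extension (or, equivalently, the ultrapower construction for metric structures), which you correctly identify and which is standard from \cite{continuous-first-order-logic} and \cite{model-theory-for-metric-structures}. Your closing remark is also well taken: the ``definable predicate $D_{\tvar{y}}(\tvar{y}, f_\alpha(\tvar{x}))$'' route would need $\alpha$ to control the distance to its zero set, which the bare definition of definable function here does not supply, so the compactness argument is the right choice given the paper's definitions.
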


%There are many non-equivalent formulas which may give rise to the same function. When we talk about $\alpha$ as a definable function, we will be assuming that $\alpha(\tvar{x},\tvar{y})$ is the formula defining $D(\overline{y},f_\alpha(\overline{x}))$ in a model. 

\subsection*{The Category of Definable Sets}

We now come to the definition of the central category of this paper: the category of definable sets of a metric theory.

\begin{definition}\label{def:def-L-T}
To a metric theory $(\Ell, T)$ we associate the category $\DEF(\Ell, T)$ defined as follows: the objects of $\DEF(\Ell, T)$ are  formulas  $A(\tvar{x})$ which are $T$-definable. A morphism $\alpha:A(\tvar{x})\to B(\tvar{y})$ is an equivalence class of the form $[\alpha(\tvar{x}, \tvar{y})]$ where $\alpha(\tvar{x},\tvar{y})$ is a definable function $A(\tvar{x})\to B(\tvar{y})$ and 
%\begin{enumerate}
$\alpha(\tvar{x},\tvar{y})\sim \beta(\tvar{x},\tvar{y})$ if they define the same function from $Z_\M(A)$ to $Z_\M(B)$ for all models $\M$ of $T$.
%\begin{enumerate}
%\item For every $\varepsilon>0$ there exists a $\delta>0$ such that $$T\models \{\alpha(\tvar{x},\tvar{y})\leq\delta\}\Implies\{\beta(\tvar{x},\tvar{y})\leq\varepsilon\}$$%$ for every $\varepsilon>0$.
%\item For every $\varepsilon>0$ there exists a $\delta>0$ such that $$T\models  \{\beta(\tvar{x},\tvar{y})\leq\delta\}\Implies\{\alpha(\tvar{x},\tvar{y})\leq\varepsilon\}$$% for every $\varepsilon>0$.
%\end{enumerate}

%We define function composition in $\DEF(\Ell, T)$ as follows. 
If $\alpha(\tvar{x}, \tvar{y})$ and $\beta(\tvar{y}, \tvar{z})$ are definable functions then 
the composition of $[\alpha]$ and $[\beta]$ is the equivalence class of the formula $\exists_{\tvar{y}}(\alpha(\tvar{x}, \tvar{y}) \wedge \beta(\tvar{y}, \tvar{z}))$. It is straightforward to show that this definition of composition is well-defined.
%in any model $\M$, we consider the functions $f_\alpha^\M(\overline{x})$ and $f_\beta^\M(\overline{y})$. The composition $g^\M=f_\beta^\M\circ f_\alpha^\M(\overline{x})$ is definable, and given by a formula $\Psi(\beta, \alpha)$. Define $[\beta(\tvar{y}, \tvar{z})]\circ[\alpha(\tvar{x}, \tvar{y})]=[\Psi(\beta, \alpha)]$.
\end{definition}

\section{The Internal Language of a Continuous Syntactic Category}

%\subsection{The internal Language}
%In the previous sections we have shown that it is possible to interpret any continuous language $\Ell$ in a continuous logical category. In the next few paragraphs, we will show that continuous logical categories can be seen as continuous theories in their own rights. 
Fix a continuous syntactic category $\C$. We produce a language $\Ell_\C$, a theory $T_\C$, and a model $\M_\C:(\Ell_\C,T_\C)\to\C$ such that under suitable assumptions placed on $\C$, $(\Ell_\C,T_\C)$ and $\M_\C$ are universal among those languages interpretable in $\C$.

\begin{definition}\label{def:the-language-of-a-logical-category}
The canonical language for a continuous syntactic category $\C$ is the continuous language $\Ell_{\C}$ is defined as follows:
\begin{enumerate}
\item A sort symbol $\SS_{A}$ for every object $A\in\C$; 
\item A relation symbol $\RR_{\varphi}$ with domain $\SS_{A}$ for every $\varphi\in\Lang(A)$. 
\item A function symbol $\ff_{\alpha}:\SS_{A}\to \SS_{B}$ for every morphism $\alpha:A\to B$ in $\C$. 
\end{enumerate}
The interpretation $\M_\C:\Ell_\C\to\C$ assigns to every sort, relation and function symbol the object it came from. We define the theory $T_\C$ to be the set of all sentences $\varphi$ such that $\M_\C(\varphi) = 0$. With this definition, $\M_\C$ is a model $(\Ell_\C, T_\C)\to\C$. 
\end{definition}

One bit of sanity checking is the following:
\begin{proposition}\label{soundness}
Suppose that $\C$ is a continuous syntactic category and $T_\C \models \varphi$ for some sentence $\varphi$ in $\Ell_\C$ then $\varphi \in T_\C$.
\end{proposition}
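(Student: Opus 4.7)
The plan is simple, essentially tautological once the definitions are unpacked. The key observation is that $T_\C$ is defined by reference to the canonical model $\M_\C$: a sentence $\sigma$ lies in $T_\C$ if and only if $\M_\C(\sigma)=0$, i.e.\ $\M_\C\models\sigma$. In particular, $\M_\C$ itself is a $\C$-model of $T_\C$; this is the content of the last sentence of Definition~\ref{def:the-language-of-a-logical-category}, and it merely requires checking that the interpretations $\M_\C(\SS_A)=A$, $\M_\C(\RR_\varphi)=\varphi$, $\M_\C(\ff_\alpha)=\alpha$ make $\M_\C:\Ell_\C\to\C$ a well-defined $\C$-structure of type $\Ell_\C$, which is immediate from the construction.

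With this in hand, the proof runs as follows. The assertion $T_\C\models\varphi$ is to be read as $T_\C\models_\C\varphi$, i.e.\ every $\C$-structure $\N:\Ell_\C\to\C$ satisfying every sentence of $T_\C$ also satisfies $\varphi$. Since $\M_\C$ is one such $\C$-structure, we conclude $\M_\C\models\varphi$; unpacking the convention $\M\models\sigma \Iff \M(\sigma)=0$, this gives $\M_\C(\varphi)=0$. By the very definition of $T_\C$, this means $\varphi\in T_\C$, which is what we wanted.

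The only thing that could be called a ``main obstacle'' is a pedantic one: one has to check that the recursive interpretation of $\Ell_\C$-formulas by $\M_\C$ agrees symbol-by-symbol with the internal data of $\C$, so that for an arbitrary $\Ell_\C$-sentence $\varphi$ the value $\M_\C(\varphi)\in\Lang(1)$ is actually a well-defined element to which the equation $\M_\C(\varphi)=0$ can be applied. This is an unwinding of how atomic formulas, connectives, and the adjoint quantifiers $\forall_\pi,\exists_\pi$ act on $\Lang$, combined with the fact that $\Lang(1)$ is a \emph{standard} continuous logical algebra so that ``$=0$'' is the correct notion of truth. Once this bookkeeping is done, the proposition follows from the single fact that the canonical model $\M_\C$ is always among the $\C$-models being quantified over in $T_\C\models\varphi$.
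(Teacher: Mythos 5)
Your proof rests on reading the hypothesis $T_\C \models \varphi$ as $T_\C \models_\C \varphi$, i.e.\ as entailment quantified over $\C$-structures. Under that reading the statement is indeed trivial, since $\M_\C$ is by construction a $\C$-model of $T_\C$. But that is not what the proposition asserts, and the triviality of your argument is the symptom. The unsubscripted $\models$ here is the ordinary semantic consequence of continuous logic (over $\Metric$-valued models), which by the Ben Yaacov--Pedersen completeness theorem is equivalent to $T_\C \proves \varphi \dotminus \epsilon$ for every $\epsilon > 0$. The content of the proposition --- and the reason it is labelled ``soundness'' and called a sanity check --- is that this \emph{external} notion of consequence is reflected by truth in the canonical model $\M_\C$, which lives in the abstract category $\C$ and is not among the models quantified over in $T_\C \models \varphi$. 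Your argument never bridges that gap: knowing that every metric model of $T_\C$ satisfies $\varphi$ tells you nothing directly about the value $\M_\C(\varphi) \in \Lang(1)$.

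The paper's proof supplies exactly the missing bridge: it converts $T_\C \models \varphi$ into derivability via completeness, and then verifies that every axiom of the deduction system of \cite{a-proof-of-completeness} holds in $\M_\C$ and that modus ponens preserves truth there. This is where the real work sits --- e.g.\ the quantifier axiom A7 is validated using the adjunction $\forall_\pi(g) \leq h$ iff $g \leq \Lang(\pi)(h)$, and the propositional axioms hold because they are part of the equational theory of $[0,1]$ as a continuous logical algebra. Soundness then yields only $(\varphi \dotminus \epsilon) \in T_\C$ for every $\epsilon > 0$, and the final step invokes the standardness of $\Lang(1)$ to conclude $\M_\C(\varphi) = 0$, hence $\varphi \in T_\C$. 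You mention standardness only as a side remark about ``$=0$ being the right notion of truth,'' but it is actually load-bearing for this last approximation step. As written, your proof establishes a different (and essentially vacuous) statement.
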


\proof This is essentially the soundness theorem for continuous logic as worked out in \cite{a-proof-of-completeness}.  We give a brief sketch of the proof.  In \cite{a-proof-of-completeness}, a proof system is presented for continuous logic.  For a theory $T$, $T \models \varphi$ iff $T \proves \varphi \dotminus \epsilon$ for every $\epsilon > 0$; the proof's are carried out in the proof system that they present.  It suffices then to notice two things: first of all, all the axioms presented in \cite{a-proof-of-completeness} are true in the model $\M_\C$ and the only proof rule, modus ponens, preserves truth in $\M_\C$.  So if $T \models \varphi$ it will follow that $(\varphi \dotminus \epsilon) \in T_\C$ for all $\epsilon > 0$.  The second thing to notice is that 
$\Lang(1)$ is a standard continuous logical algebra.  The statement that $(\varphi \dotminus \epsilon) \in T_\C$ for all $\epsilon > 0$ means that in the continuous logical algebra $\Lang(1)$, $\varphi$ evaluates to 0 and is hence in $T_\C$.

It is somewhat tedious to check all the axioms presented in \cite{a-proof-of-completeness}; we look at them in batches.  The first set are the axioms A1--A6 which govern the propositional portion of the proof system.  These are all equations which hold in $[0,1]$ viewed as a continuous logical algebra and hence are true in $\M_\C$.  The axioms A10--A14 which govern the metric symbols are not involved in the proof system for $[0,1]$-valued logic but see Proposition \ref{metric-proof-system} after we introduce metric symbols in our language.
%and the uniform continuity of the function and relation symbols are also equational, hence easily verified, once one interprets the definition of a metric in a continuous syntactic category.  
This leaves us with the axioms A7--A9 which govern the quantifiers; we check A7 and leave the other two to the interested reader. 

The axiom A7 from \cite{a-proof-of-completeness} is  
\[
(\forall_\var{x}  \psi \dotminus  \forall_\var{x} \varphi) \dotminus \forall_\var{x}(\psi \dotminus \varphi)
\]
Implicit in the notation is the possibility that $\psi$ and $\varphi$ are formulas in the free variables $\var{x}$ and $\tvar{y}$.  So if $Y$ is the object associated to the variables $\tvar{y}$, this axiom should be true in the continuous logical algebra $\Lang(\tvar{y})$.  Rearranging things in that algebra alone, we wish to prove that
\[
\forall_\var{x} \psi \leq \forall_\var{x}(\psi \dotminus \varphi) + \forall_\var{x} \varphi.
\]
But if $\Lang(\pi):\Lang(\tvar{y}) \to \Lang(\var{x}\tvar{y})$ is the embedding to which $\forall_\var{x}$ is an adjoint then we have $\theta \leq \Lang(\pi)\circ \forall_\var{x} \theta$ for any $\theta \in \Lang(\var{x}\tvar{y})$ and this guarantees that
\[
\forall_\var{x} \max\{\psi,\varphi\} = \forall_\var{x} ((\psi \dotminus \varphi) + \varphi) \leq \forall_\var{x}(\psi \dotminus \varphi) + \forall_\var{x} \varphi
\]
and since $\forall$ is order preserving the result follows. \qed

\begin{definition}\label{def:interpretation}
Let $\Ell$ and $\Ell'$ be continuous languages. An {\em interpretation} $I$ of $\Ell$ in $\Ell'$, which we write $I:\Ell\to\Ell'$ assigns
\begin{enumerate}
\item to every sort symbol $\SS$ of $\Ell$, a sort $I(\SS)$ of $\Ell'$
\item to every function symbol $\ff:\prod\SS_i\to\SS$, a term $I(\ff):\prod I(\SS_i)\to I(\SS)$, and 
\item to every relation symbol $\RR$ with domain $\SS_1\times\cdots\times\SS_n$ in $\Ell$ a formula $I(\RR)$ with domain $I(\SS_1)\times\cdots\times I(\SS_n)$ %with domain $\prod \dom(I(\SS_i))$ in $\Ell'$.
\end{enumerate}
\end{definition}

 This is clearly not a general enough definition of interpretation since sorts can only be interpreted as sorts. However, the definition simplifies the presentation and it is enough for this paper, since many of the languages we will consider will have sort symbols corresponding to every definable set. If $I:\Ell\to\Ell'$ is an interpretation, and $\sigma(\tvar{x})$ is a sentence, then $I(\sigma(\tvar{x}))$ is the natural interpretation of $\sigma$ via $I$. 
 
\begin{definition}
Let $(\Ell, T)$ and $(\Ell', T')$ be continuous theories. An {\em interpretation of  $(\Ell, T)$ in  $(\Ell', T')$} is an interpretation $I$ of $\Ell$ in $\Ell'$ such that whenever $\sigma$ is a sentence in $\Ell$, we have that  $T\models \sigma$ only if $T'\models I(\sigma)$. We will denote an interpretation of $(\Ell, T)$ in $(\Ell', T')$ by an arrow $I:(\Ell, T)\to(\Ell', T')$.
\end{definition}

If $\M:(\Ell', T')\to\R$ is a model and $I:(\Ell, T)\to (\Ell', T')$ is an interpretation then we may compose $I$ with $\M$ and get a model $\M\circ I:(\Ell, T)\to\R$. This defines a functor $I^*_\R:\Mod_\R(\Ell', T')\to\Mod_\R(\Ell, T)$, which we call the {\em forgetful functor associated to $I$}.

\begin{theorem}[The Universal Property of $(\Ell_\R, T_\R)$ with respect to $\R$]\label{thm:universal-property-of-LR}
Let $\R$ be a continuous syntactic category, and let $(\Ell_\R, T_\R)$ be the canonical language associated to $\R$. Let $\M_\R:(\Ell_\R, T_\R)\to\R$ be the canonical interpretation. Then for any theory $(\Ell, T)$, and any $\R$-model $\M:(\Ell, T)\to\R$, there is an interpretation $I:(\Ell, T)\to (\Ell_\R, T_\R)$ such that the following commutes:
\[
\xymatrix{{(\Ell, T)\rto^\M\dto_{I}}&{\R}\\{(\Ell_\R,T_\R)\urto_{\M_\R}}&{}}
\] 
\end{theorem}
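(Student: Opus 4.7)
The plan is to construct $I$ by sending each symbol of $\Ell$ to the symbol of $\Ell_\R$ canonically indexed by its image under $\M$, and then to verify by structural induction that $\M_\R \circ I = \M$ on all terms and formulas; the interpretation-of-theories property falls out immediately from this commutativity. On sorts, set $I(\SS) := \SS_{\M(\SS)}$. For a function symbol $\ff:\SS_1 \times \cdots \times \SS_n \to \SS$, the morphism $\M(\ff):\M(\SS_1) \times \cdots \times \M(\SS_n) \to \M(\SS)$ in $\R$ names a function symbol $\ff_{\M(\ff)}$ of $\Ell_\R$, which we take to be $I(\ff)$ (identifying product sort symbols in $\Ell_\R$ with the corresponding tuples of sorts via the projection function symbols coming from $\R$). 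For a relation symbol $\RR$ on $\SS_1 \times \cdots \times \SS_n$, the element $\M(\RR) \in \Lang(\M(\SS_1) \times \cdots \times \M(\SS_n))$ is named by a relation symbol $\RR_{\M(\RR)}$ of $\Ell_\R$, so $I(\RR)$ is set to be the atomic formula $\RR_{\M(\RR)}$ applied to free variables of the appropriate sorts.

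The bulk of the work is checking $\M_\R(I(t)) = \M(t)$ and $\M_\R(I(\varphi)) = \M(\varphi)$ by induction. For variables and for compound terms, this follows from the definition of $I$ on symbols together with the defining equalities $\M_\R(\SS_A) = A$ and $\M_\R(\ff_\alpha) = \alpha$. For atomic formulas it follows similarly from $\M_\R(\RR_\psi) = \psi$. For connectives, the step is automatic because every $\R$-structure interprets an $n$-ary continuous connective as the corresponding operation in the continuous logical algebra $\Lang(\cdot)$, and this interpretation is functorial. For quantifiers, one invokes that $\M_\R$ must interpret $\forall_\tvar{y}$ and $\exists_\tvar{y}$ as the left and right adjoints $\forall_\pi, \exists_\pi$ of $\Lang(\pi)$ for the appropriate projection $\pi$, and combines this with the inductive hypothesis applied to the body $\varphi(\tvar{x},\tvar{y})$.

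Once the commutative triangle is in hand, verifying that $I$ is an interpretation of theories is short: for any sentence $\sigma$ of $\Ell$ with $T \models \sigma$, since $\M$ is an $\R$-model of $T$ we get $\M(\sigma) = 0$ in $\Lang(1)$, and by commutativity $\M_\R(I(\sigma)) = \M(\sigma) = 0$, which by the defining property of $T_\R$ places $I(\sigma) \in T_\R$, whence $T_\R \models I(\sigma)$. The main obstacle I anticipate is the quantifier case of the induction: one needs the projection $\pi$ used to build $\M(\forall_\tvar{y}\varphi)$ to correspond precisely to the projection used to build $\M_\R(I(\forall_\tvar{y}\varphi))$. Because the function and relation symbols of $\Ell_\R$ are indexed by single sorts $\SS_A$, this requires carefully spelling out the identification of $\SS_{\M(\tvar{x}\tvar{y})}$ with $\SS_{\M(\tvar{x})} \times \SS_{\M(\tvar{y})}$ inside $\Ell_\R$ via the projection morphisms of $\R$. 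With that bookkeeping in place, the uniformity of the adjoint construction built into the definition of a continuous syntactic category makes the remainder of the step routine.
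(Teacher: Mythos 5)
Your proposal is correct and follows essentially the same route as the paper: define $I$ symbol-by-symbol via the canonical naming of objects, morphisms, and elements of $\Lang(\cdot)$ in $\Ell_\R$, observe that $\M = \M_\R \circ I$, and deduce the theory-preservation from the fact that $\M(\sigma) = 0$ forces $I(\sigma) \in T_\R$ by the definition of $T_\R$. The only difference is that you spell out the inductive verification of the commutative triangle (including the product-sort bookkeeping for quantifiers), which the paper dispatches as immediate from the definitions.
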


\begin{proof}
By definition the arrow $\M_\R$ is a bijection between the sorts of $\Ell_\R$, and the objects of $\R$. Define $I$ by assigning to every sort symbol $\SS$ of $\Ell$ the sort $\M_0^{-1}(\M(\SS))$ of $\Ell_\R$. If $\ff$ is a function symbol of $\Ell$, then $\M(\ff)$ is a morphism of $\R$, which corresponds to a function symbol of $\Ell_\R$, and we can define $I(\ff)=\M_\R^{-1}(\M(\ff))$. If $\RR$ has domain $\SS_1\times\cdots\times\SS_n$ is a relation symbol of $\Ell$, then $\M(\RR)\in\Lang_\R(\M(\SS_1)\times\cdots\times\M(\SS_n))$, and we can define $I(\RR)=\M_\R^{-1}(\M(\RR))$.  It it clear from this definition that $\M=\M_\R\circ I$. 

Let $\sigma$ be a sentence of $\Ell$, and suppose $T\models \sigma$. Since $\M$ is a model of $T$, by we have $\M\models\sigma$, which implies that $\M_\R\models \M(\sigma)$, the translation of $\sigma$ into $\R$. By definition, $\M(\sigma)$ is an element of $T_\R$, so  $T_\R\models I(\sigma)$, showing that $I$ is indeed an interpretation. 
\end{proof}

\section{Metric Logical Categories}\label{sec:metric-logical-category}

%\subsection{Pseudo-metric}

%This section introduces the concept of metric logical category. Let $\Ell$ be a metric language. Let us first consider two entailment relations. If $T$ is a metric theory, the notation $T\models^* (\Sigma\Implies\Psi)$ will mean that for every $\M:\Ell\to\Metric$, if $\M\models T$, then $\M\models (\Sigma\Implies\Psi)$. The notation $T\models (\Sigma\Implies\Psi)$ will have its usual meaning. Theorem \ref{thm:soundness} implies that if $T\models(\Sigma\Implies\Psi)$, then $T\models^*(\Sigma\Implies\Psi)$. We will also write $\Mod^*(\Ell, T)$ for the category of all $\Metric$-based models of $T$. We begin with a few results relating $\models^*$ to $\models$.

% \begin{theorem}\label{thm:met-is-a-metric-logical-category}
% $\Metric$ is a {\em complete for metric languages}, i.e. if $\Ell$ is a metric language, and $T$ is a metric theory, then $T\models^*(\Sigma\Implies\Psi)$ implies $T\models(\Sigma\Implies\Psi)$
% \end{theorem}

% \begin{proof}
% This is a consequence of Theorem \ref{thm:completion-of-metric-pre-structure}. implies that if $T$ is a metric theory, and $\Sigma\Implies\Psi$ is a sequent, if $\M:(\Ell, T)\to\Set$ is a model in which $\Sigma\Implies\Psi$ fails to hold, then  the model $\M^*:(\Ell, T)\to\Metric$ is a structure in which $\Sigma\Implies\Psi$ also fails to hold.
% \end{proof}

% \begin{corollary}
% Let $\Ell$ be a metric language. For any metric $\Ell$-theory $T$, if $T\models^*(\Sigma\Implies\Psi)$, then $T\models_\R (\Sigma\Implies\Psi)$ for any metric logical category $\R$.
% \end{corollary}

\begin{definition} 
Let $S\in \C$, and $d\in\Lang(S\times S)$.  We will call $d$ a {\em pseudo-metric on $S$} if and only if $\M_\C$ satisfies all the sentences listed in Definition \ref{def:pseudo-metric}.
\end{definition}

\begin{proposition}\label{prop:equal-terms-implies-zero-distance}
Let $\R$ be a continuous syntactic category, and let $B\in\R$. Suppose $d$ is a pseudo-metric on $B$, and let $t(\tvar{x})$ and $s(\tvar{x})$ be two terms of $\Ell_\R$ with values in $B$. If $\M_\R(t(\tvar{x}))=\M(s(\tvar{x}))$, then $\M_\R\models d(t(\tvar{x}),s(\tvar{x}))$.
\end{proposition}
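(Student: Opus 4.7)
The plan is to unpack the definition of the interpretation of the atomic formula $d(t(\tvar{x}),s(\tvar{x}))$ in $\M_\R$ and then reduce to the reflexivity axiom of the pseudo-metric, exploiting the contravariant functoriality of $\Lang$ and the fact that morphisms in $\CLA$ preserve the constant $0$.

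First I would set up notation. Let $X = \M_\R(\tvar{x})$ and let $f = \M_\R(t(\tvar{x})) = \M_\R(s(\tvar{x})) : X \to B$ be the common morphism in $\R$. Following the interpretation of atomic formulas from the section on continuous syntactic categories, the pair $(\M_\R(t),\M_\R(s))$ yields a morphism $X \to B \times B$, and
\[
\M_\R(d(t(\tvar{x}),s(\tvar{x}))) \;=\; \Lang((\M_\R(t),\M_\R(s)))(\M_\R(d)).
\]
Since $\M_\R(t) = \M_\R(s) = f$, this pair is exactly $\Delta_B \circ f$, where $\Delta_B = \langle \mathrm{id}_B, \mathrm{id}_B\rangle : B \to B\times B$ is the diagonal.

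Next, by contravariant functoriality of $\Lang$,
\[
\Lang(\Delta_B \circ f)(\M_\R(d)) \;=\; \Lang(f)\bigl(\Lang(\Delta_B)(\M_\R(d))\bigr).
\]
Now $\Lang(\Delta_B)(\M_\R(d))$ is precisely the interpretation $\M_\R(d(y,y))$ of the formula obtained by substituting a single variable $y$ of sort $\SS_B$ into both arguments of $d$. Because $d$ is assumed to be a pseudo-metric on $B$, clause (1) of Definition \ref{def:pseudo-metric} gives $\M_\R \models d(y,y) = 0$, i.e.\ $\Lang(\Delta_B)(\M_\R(d)) = 0$ in $\Lang(B)$. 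Since $\Lang(f) : \Lang(B) \to \Lang(X)$ is a morphism of continuous logical algebras, it preserves the constant symbol $0$, so applying it yields $0 \in \Lang(X)$. Combining the displays gives $\M_\R(d(t(\tvar{x}),s(\tvar{x}))) = 0$, which is exactly $\M_\R \models d(t(\tvar{x}),s(\tvar{x}))$.

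There is no real obstacle here: the statement is essentially a categorical restatement of the substitution instance $d(y,y)=0$ of the reflexivity axiom, and the only thing to be careful about is correctly identifying the atomic formula $d(t(\tvar{x}),s(\tvar{x}))$ with the morphism $\Delta_B \circ f$ into $B \times B$, together with the bookkeeping that $\Lang$ is contravariant and sends composition to composition (in the opposite order) while preserving the constant $0$ as part of the $\CLA$-structure.
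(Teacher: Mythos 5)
Your proof is correct and follows essentially the same route as the paper's: interpret the atomic formula $d(t(\tvar{x}),s(\tvar{x}))$ by pulling $d$ back along the induced morphism into $B\times B$, observe that when $\M_\R(t)=\M_\R(s)=f$ this morphism factors as $\Delta_B\circ f$, and conclude via $\Lang(f)(\Lang(\Delta_B)(d))=\Lang(f)(0)=0$ using reflexivity of the pseudo-metric. The only cosmetic difference is that the paper first forms $F:\M_\R(\tvar{x})\times\M_\R(\tvar{x})\to B\times B$ and then precomposes with the diagonal on the domain, whereas you work directly with the pairing $\langle f,f\rangle$; these are the same computation.
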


\begin{proof}
Let $f=\M(t(\tvar{x})):\M(\tvar{x})\to B$ and $g=M(s(\tvar{x})):\M(\tvar{x})\to B$. By definition, the interpretation of $d(t(\var{x}),s(\var{y}))$ as an element of $\Lang(\DOM(\tvar{x})\times \DOM(\tvar{x}))$ is given by $\Lang(F)(d)$, where $F:\DOM(\tvar{x})\times \DOM(\tvar{x})\to B\times B$ is the unique morphism satisfying $\pi_1^BF=f\pi_1^A$ and $\pi_2^BF=g\pi_2^A$. Since $d$ is a pseudo-metric, $\R\models d(\var{x},\var{x})$ which by definition means that $\Lang(\Delta_B)(d)=0$ for where $\Delta_B:B\to B\times B$ is the diagonal embedding. By the definition of $F$, and because of the assumption that $f=g$ as morphisms in $\R$, we have $F\Delta_A=\Delta_B f=\Delta_B g$. The interpretation of $d(f(\var{x}),f(\var{x}))$ as an object of $\Lang(A)$ is given by $\Lang(F\Delta_A)(d)=\Lang(f)(\Lang(\Delta_B)(d)$. This shows that $\R\models d(t(\var{x}),s(\var{x}))$. 
\end{proof}

\begin{definition}
If $d_A$ is a pseudo-metric on $A$, $d_B$ is a pseudo-metric on $B$ and $\alpha:A\to B$ is an arrow in $\C$, then we say that $\alpha$ is {\em uniformly continuous with respect to $d_A$ and $d_B$} if and only if for every $\varepsilon>0$, there exists $\delta>0$ such that 
\[
T_\C\models d_A(x,y)<\delta\Implies d_B(\alpha(x),\alpha(y))\leq\varepsilon .
\] 
If $R\in\Lang(A)$, then we say that $\R$ is {\em uniformly continuous with respect to $d_A$} if and only if for every $\varepsilon>0$, there exists $\delta>0$ such that 
\[
T_\C\models d_A(x,y)<\delta\Implies |R(x)-R(y)|\leq\varepsilon
\]
\end{definition}

%{\bf Metrics should also account for terms: if $d(x,y)$ defines a metric, then for every term $t$, $d(t(x), t(y))$ should define a metric as well on the domain on $t$.}

\begin{definition}
A {\em metric logical category} is a pair $(\R, d)$, where $\R$ is a syntactic category, and $d$ is an assignment of a pseudo-metric $d_B\in\Lang(B\times B)$ to every object $B\in\R$, such that 
\begin{enumerate}
\item for every $A,B\in \R$, every morphism $\alpha:A\to B$ is uniformly continuous with respect to $d_A$ and $d_B$, and every element $b\in\Lang(B)$ is uniformly continuous with respect to $d_B$. 
\item for every $A,B\in\C$, and $\alpha,\beta\in\hom(A, B)$, if $T_\R\models d_A(\ff_\alpha,\ff_\beta)$ then $\alpha = \beta$
%\item $\alpha:A\to B$ is an epimorphism if $T_\C\models \forall_\var{b}\exists_\var{a}[\dd(f_\alpha(a), b)]$
\end{enumerate}
\end{definition}

The metric symbol will be treated as a special symbol.  This is not entirely necessary but makes the presentation somewhat simpler.  So when interpreting a metric language $\Ell$ in a {\em metric} logical category, we will require that for every sort $\SS$, the metric symbol $\dd_\SS$ be interpreted by the metric $d_{\M(\SS)}$. The congruence properties for $d$ we are requiring of $T_\C$ will then force the interpretation of all function and relation symbols, as well as all terms and formulas, to be uniformly continuous with respect to the chosen metrics.

Let's see that $\Metric$ is a metric logical category.  If we assign to every object $X$, its metric $d_X$ then by definition of the category, all morphisms and all elements of $\Lang(X)$ are uniformly continuous.  Now suppose that $\alpha,\beta:X \to Y$ are two uniformly continuous functions between two metric spaces in $\Metric$, $X$ and $Y$.  If $\alpha \neq \beta$ then there is $a \in X$ such that $\alpha(a) \neq \beta(a)$. Then $d_X(\alpha(a),\beta(a)) \neq 0$ and so $d_X(\ff_\alpha,\ff_\beta)$ is not in $T_\Metric$. This shows that $\Metric$ is a metric logical category.

%In $\Metric$, $\alpha:X \to Y$ is an epimorphism if and only if the image of $\alpha$ is dense in $Y$.  Unwrapping the definitions, we see that the theory of $\Metric$ satisfies the formula $\forall \var{b}\exists_\var{a}[\dd(f_\alpha(a), b)]$ means that $\sup_x\inf_y d(\alpha(x),y) = 0$ which implies $\alpha$ is surjective.  
The other important example of a metric logical category is $\DEF(\Ell,T)$ for a metric theory
 $(\Ell, T)$ By Lemma 1.10 in \cite{definability-of-groups}, $\DEF(\Ell, T)$ is closed under the formation of all finite products.
 Now suppose we have a $T$-definable set $A(\tvar{x})$; we wish to define the continuous logical algebra $\Lang(A(\tvar{x}))$.
 For two formulas $\varphi(\tvar{x})$ and $\psi(\tvar{x})$ we say $\varphi \sim_A \psi$ if $T\models \forall_{A(\tvar{x}) = 0} |\varphi(\tvar{x}) - \psi(\tvar{x})|$. 
  Let $\Lang(A(\tvar{x}) )$ be the equivalence classes of $\sim_A$.  
  The definition of the continuous logical operations is almost tautological: if $\uu:[0,1]^n \to [0,1]$ is a continuous function and 
  $\varphi_1(\tvar{x}),\ldots,\varphi_n(\tvar{x})$ are formulas then
 \[
 \uu([\varphi_1],\ldots,[\varphi_n]) = [\uu(\varphi_1,\ldots,\varphi_n)]
 \]
 where $[\psi]$ is the $\sim_A$-class of $\psi$.  Notice that $\Lang(1)$ will simply be the $T$-equivalence classes of sentences.  If $T \models \varphi \leq \varepsilon$ for all $\varepsilon > 0$ then $T \models \varphi$.  This says that $\Lang(1)$ is a standard continuous logical algebra.
  
Now if $\alpha(\tvar{x},\tvar{y})$ is a definable function from $A(\tvar{x})$ to $B(\tvar{y})$ then we can define a natural map from $\Lang(B)$ to $\Lang(A)$ by composing with $\alpha$.  This yields a contravariant functor from $\DEF(\Ell,T)$ to $\CLA$.   If $A$ and $B$ are two definable sets, let $\pi:A \times B \to B$ be the definable function which projects onto $B$.  $\Lang(\pi)$, composition with $\pi$ is clearly a continuous logical embedding from $\Lang(B)$ to $\Lang(A \times B)$.  Of course it has a left adjoint given by 
\[
\forall_\pi (\varphi(\tvar{x},\tvar{y})) := \forall_{A(\tvar{x}) = 0} \varphi(\tvar{x},\tvar{y}).
\]
This shows that $\DEF(\Ell,T)$ is a continuous syntactic category.  The uniform continuity requirement follows from the fact that $T$ knows that each formula is uniformly continuous with respect to the metrics on each sort.  Finally, if $\alpha$ and $\beta$ are two definable functions from $A$ to $B$ and $T\models d_A(\alpha,\beta)$ then $\alpha$ and $\beta$ are $T$-equivalent and hence equal in $\DEF(\Ell,T)$.

\begin{definition}
Let $(\Ell, T)$ be a metric theory. We define the {\em canonical interpretation} $\M_{(\Ell, T)}:(\Ell, T)\to\DEF(\Ell, T)$ as follows. To the sort $\SS$ we assign the object $[\dd(\var{x}, \var{x})]$, where $\var{x}$ is a variable of sort $\SS$. To the function symbol $\ff:\SS_1\times\cdots\times\SS_n\to\SS$ we assign the functional formula $\alpha_\ff(\tvar{x}, \tvar{y})=\dd_\var{y}(\ff(\tvar{x}), \var{y})$, and to the relation symbol $\RR\subseteq\SS_1\times\cdots\times\SS_n$ we assign the formula $\RR(\var{x}_1,...,\var{x}_n)\in\Lang(\var{x}_1\var{x}_2\cdots\var{x}_n)$.
\end{definition}

\begin{proposition}\label{metric-proof-system}
Suppose that $\R$ is a metric logical category and $T_\R \models \varphi$ for some sentence $\varphi$.  Then $\varphi \in T_\R$.
\end{proposition}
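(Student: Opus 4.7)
The plan is to follow the same template as the proof of Proposition \ref{soundness}, extending it to cover the metric axioms A10--A14 of the proof system in \cite{a-proof-of-completeness}. The overall reduction is identical: by completeness, $T_\R \models \varphi$ implies $T_\R \vdash \varphi \dotminus \varepsilon$ for every $\varepsilon > 0$. If every axiom is true in the canonical model $\M_\R$ and modus ponens preserves truth in $\M_\R$, then $(\varphi \dotminus \varepsilon) \in T_\R$ for every $\varepsilon > 0$, and since $\Lang(1)$ is a standard continuous logical algebra, this forces $\varphi \in T_\R$. The axioms A1--A9 and modus ponens were already handled in Proposition \ref{soundness}, so only A10--A14 remain.

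The axioms expressing that $\dd_\SS$ is a pseudo-metric on each sort---namely $\dd(\var{x},\var{x}) = 0$, symmetry, and the triangle inequality---are true in $\M_\R$ essentially by definition. A metric logical category comes equipped, for every object $B$, with a distinguished $d_B \in \Lang(B \times B)$ that is a pseudo-metric on $B$, and by the definition preceding the proposition this unpacks precisely as $\M_\R$ satisfying the three sentences of Definition \ref{def:pseudo-metric}. Since the interpretation of $\dd_\SS$ in $\M_\R$ is required (by our convention about metric symbols in metric logical categories) to coincide with $d_{\M_\R(\SS)}$, the pseudo-metric axioms transfer directly.

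The remaining axioms are the uniform-continuity axioms for function and relation symbols. In the canonical language $\Ell_\R$ every function symbol has the form $\ff_\alpha$ for a morphism $\alpha : A \to B$ of $\R$, and every relation symbol has the form $\RR_\varphi$ for some $\varphi \in \Lang(B)$. The definition of metric logical category requires each such $\alpha$ to be uniformly continuous with respect to $d_A$ and $d_B$ and each $\varphi \in \Lang(B)$ to be uniformly continuous with respect to $d_B$; these are exactly the $\varepsilon$--$\delta$ assertions that A13 and A14 make, once one reads them against the fixed interpretation of $\dd_\SS$ as $d_{\M_\R(\SS)}$.

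The main obstacle---really the only nontrivial bookkeeping step---is matching the formal shape of the uniform-continuity axioms of \cite{a-proof-of-completeness} to the $\varepsilon$--$\delta$ clauses in the definition of metric logical category, including verifying that the sequent-style formulation of Definition \ref{def:metric-language-and-theory} encodes the same $[0,1]$-valued sentence that A13 and A14 assert. Once that is confirmed, truth of these axioms in $\M_\R$ is immediate from the two clauses in the definition of metric logical category, and the proof concludes exactly as in Proposition \ref{soundness}.
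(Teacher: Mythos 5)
Your proposal is correct and follows the same skeleton as the paper's argument: reduce to the proof system of \cite{a-proof-of-completeness} exactly as in Proposition \ref{soundness}, and note that the only new obligations are the metric axioms A10--A14. Where you diverge is in how those axioms are discharged. The paper disposes of them in one line by asserting that they are ``equational axioms'' valid in $[0,1]$ and hence in every continuous logical algebra. You instead split them into the pseudo-metric axioms (reflexivity, symmetry, triangle inequality), which you derive from the requirement that each $d_B$ satisfy Definition \ref{def:pseudo-metric} in $\M_\R$, and the uniform-continuity axioms for function and relation symbols, which you derive from clause (1) of the definition of a metric logical category together with the convention that $\dd_\SS$ is interpreted as $d_{\M_\R(\SS)}$. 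Your decomposition is arguably the more faithful one: the uniform-continuity axioms are not interpretation-independent equational identities of $[0,1]$ --- they genuinely depend on the interpretations of the symbols $\ff_\alpha$ and $\RR_\varphi$ --- so the work really is carried by the two clauses of the definition of metric logical category, exactly as you say. Your remaining caveat about matching the sequent-style $\varepsilon$--$\delta$ clauses to the formal shape of the axioms is honest bookkeeping and does not hide a gap.
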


\begin{proof} This is soundness for metric logical categories and is an analogue of Proposition \ref{soundness}.  We again rely on the proof system presented in \cite{a-proof-of-completeness}.  As in Proposition \ref{soundness}, one needs only check that the axioms and proof rules presented in that paper preserve validity in our setting.  The only axioms we did not discuss in our earlier proof were axioms A10--14.  One observes that these are all equational axioms and they hold in the standard continuous logical algebra $[0,1]$.  Hence they hold in any continuous logical algebra and we are done.
\end{proof}

\begin{definition}
A {\em logical functor} from a metric logical category $(\R,\Lang_\R)$ to another $(\S,\Lang_\S)$ is a pair $I$ and $i$ such that
\begin{enumerate}
\item  $I:\R\to\S$ is a product preserving functor,
\item  $i$ is a natural transformation between $\Lang_\R$ and $\Lang_S \circ I$,
\item for every $A \in \R$, $i$ induces a continuous logical algebra embedding from $\Lang_\R(A)$ to $\Lang_\S(I(A))$, and
 \item for every $S\in\R$, $\iii_S(d^\R_S)=d^\S_{\FF(S)}$.
 \end{enumerate}
We will denote by $\hom(\R, \S)$ the class of all logical functors from $\R$ to $\S$.
\end{definition}

The final condition here is again technically not necessary but is in line with our assumption regarding interpretations of metric symbols.

\begin{theorem}[The Universal Property of $\R$ with respect to $(\Ell_\R, T_\R)$]\label{thm:universal-property-of-def-1}
Let $\R$  and $\S$ be metric logical categories, and let $(\Ell_\R, T_\R)$ be the canonical language associated to $\R$. %Let $C:(\Ell_\R, T_\R)\to\R$ be the canonical interpretation. 
For any model $\M:(\Ell_\R, T_\R)\to\S$, there is a logical functor $I:\R\to \S$ such that $$\xymatrix{{(\Ell_\R, T_\R)\rto^{\M_\R}\drto_{\M}}&{\R\dto^I}\\{}&{\S}}$$ is commutative.
\end{theorem}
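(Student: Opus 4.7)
The plan is to read off the logical functor $(I,i)$ directly from the data of $\M$, and then verify each of its defining properties by transporting structural identities from the canonical model $\M_\R$ through to $\M$ via soundness. On objects I put $I(A) := \M(\SS_A)$; on a morphism $\alpha : A \to B$ I put $I(\alpha) := \M(\ff_\alpha)$; and for the natural transformation I set $i_A(\varphi) := \M(\RR_\varphi)$ for every $\varphi \in \Lang_\R(A)$. Commutativity of the triangle $\M = I \circ \M_\R$ is then immediate, because $\M_\R$ was defined to send each symbol of $\Ell_\R$ back to the object, morphism, or $\Lang_\R$-element that indexes it.

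The remaining conditions defining a logical functor are all verified by a single template. Each identity to be established in $\S$ is encoded by an $\Ell_\R$-sentence which visibly holds in $\M_\R$; by Proposition \ref{metric-proof-system} any such sentence must lie in $T_\R$; since $\M$ is a model of $T_\R$, the same sentence then holds in $\S$; and the metric logical category axiom for $\S$---that $T_\S \models d(\ff_\alpha,\ff_\beta) = 0$ forces $\alpha = \beta$---upgrades the resulting semantic statement to a genuine equality in $\S$. Applying this template to $d_A(\ff_{1_A}(x),x) = 0$ and $d_C(\ff_{\beta\circ\alpha}(x),\ff_\beta(\ff_\alpha(x))) = 0$ yields functoriality of $I$; applying it to the existence-and-uniqueness-of-pairing sentences for each product $A \times B$ in $\R$ yields preservation of finite products; applying it to $\RR_{\Lang_\R(\alpha)(\varphi)}(\tvar{x}) = \RR_\varphi(\ff_\alpha(\tvar{x}))$ yields naturality of $i$; and applying it to $\RR_{\uu(\varphi_1,\ldots,\varphi_n)} = \uu(\RR_{\varphi_1},\ldots,\RR_{\varphi_n})$ for every $\uu \in U_n$ yields that each $i_A$ is a continuous logical algebra homomorphism. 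Preservation of the distinguished pseudo-metrics $i_S(d^\R_S) = d^\S_{I(S)}$ needs no real work: $d^\R_S$ is by definition the metric symbol on the sort $\SS_S$ in $\Ell_\R$, and by convention $\M$ interprets this symbol by $d^\S_{I(S)}$.

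The step I expect to be the main obstacle is upgrading each $i_A$ from a homomorphism to an \emph{embedding}. Injectivity does not fall out of the template above, because $\M$ could a priori identify distinct $\varphi,\psi \in \Lang_\R(A)$ without the separating sentence $\sup_{\tvar{x}}|\RR_\varphi(\tvar{x}) - \RR_\psi(\tvar{x})|$ belonging to $T_\R$. My approach will be to exploit the left adjoint $\forall_\pi$ for the unique morphism $\pi : A \to 1$ together with the standardness of $\Lang_\R(1)$, in order to reduce the question to a comparison of elements of the standard algebra $\Lang_\R(1)$, where equality can be read off from bounds by positive constants. This is the only step where the proof is genuinely delicate, and where the metric structure of $\R$---as opposed to its purely categorical structure---does real work.
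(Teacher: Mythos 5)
Your construction of $(I,i)$ and your verification template are exactly the paper's: the same definitions of $I$ on objects and morphisms and of $i_A(\varphi)=\M(\RR_\varphi)$, and the same mechanism (an identity in $\R$ is encoded by a sentence true in $\M_\R$, hence lying in $T_\R$, hence satisfied by $\M$, and the separation axiom of $\S$ upgrades ``distance zero'' to equality) for functoriality, the homomorphism property of each $i_A$, and naturality. Two minor remarks: the relevant sentences lie in $T_\R$ by the \emph{definition} of $T_\R$ as the set of sentences true in $\M_\R$, so the appeal to Proposition \ref{metric-proof-system} is misdirected (soundness runs in the opposite direction); and you explicitly treat product preservation and the condition $i_S(d^\R_S)=d^\S_{I(S)}$, which the paper's proof leaves implicit --- that is a genuine improvement in completeness.

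The gap is exactly where you predicted it: the embedding property of $i_A$. Your reduction does show that if $\varphi\neq\psi$ in $\Lang_\R(A)$ then $e:=|\varphi-\psi|$ satisfies $\forall_\pi(e)\neq 0$ in $\Lang_\R(1)$ (if $\forall_\pi(e)=0$ then $e\leq\Lang(\pi)(0)=0$ by the adjunction), and standardness gives an $\epsilon_0>0$ with $\forall_\pi(e)\not\leq\epsilon_0$. But this is where the argument stalls: a standard continuous logical algebra need not be totally ordered, so $\forall_\pi(e)\not\leq\epsilon_0$ does not yield $\forall_\pi(e)\geq\epsilon$ for any $\epsilon>0$, and hence no sentence of the form $\epsilon\dotminus\forall_{\tvar{x}}|\RR_\varphi-\RR_\psi|$ need belong to $T_\R$. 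Without such a sentence, nothing in $T_\R$ forces an arbitrary model $\M$ to separate $i_A(\varphi)$ from $i_A(\psi)$. Concretely, take $\R=\DEF(\Ell,T)$ for an incomplete $T$, $\S=\Metric$, and two sentences distinguished only in one completion of $T$: a $\Metric$-model coming from the other completion collapses them, so $i_1$ fails to be injective. Either the statement must be read as restricting to conservative models $\M$, or ``embedding'' must be weakened to ``homomorphism''; your $\forall_\pi$/standardness idea cannot close the gap as it stands. For what it is worth, the paper's own one-line justification of this step --- that if $\M$ satisfies $|\RR_a-\RR_b|$ then ``this must be true in $T_\R$'' --- reverses the direction of entailment and suffers from the same defect, so you have correctly isolated the weak point of the proof rather than missed an idea the paper actually supplies.
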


\begin{proof}
By definition, $\M_\R$ defines a bijection between the sorts of $(\Ell_\R, T_\R)$ and the objects of $\R$. If $A\in\R$, then we define $I(A)=\M(\SS_A)=\M(\M_\R^{-1}(A))$. If $\alpha:A\to B$ is a morphism, then we have $\ff_\alpha:\SS_A\to\SS_B$ as a function symbol in $\Ell_\R$. Define $I(\alpha)=\M(\ff_\alpha)=\M(\M_\R^{-1}(\alpha))$. If $\varphi\in\Lang_\R(A)$, then in $\Ell_\R$ we have the predicate symbol $\RR_\varphi$, and we can define $I(\varphi)=\M(\RR_\varphi)=\M(\M_\R^{-1}(\varphi))$.

First we show that $I$ thus defined is indeed a functor. Suppose $\alpha:A\to B$ and $\beta:B\to C$ are morphisms in $\R$, and $D$ is any other object. Consider $\beta\alpha$. We have $\beta \circ \alpha = \beta\alpha$ and so we have 
\[
T_\R\models d_A(\ff_{\beta\alpha}, \ff_\beta(\ff_\alpha)). 
\]
 Since we are assuming that $\M$ is a model of $T_\R$, $\M$ satisfies $d_A(\ff_{\beta\alpha}, \ff_\beta(\ff_\alpha)) = 0$. This shows that in $\S$, $d_{I(A)}(\ff_{I(\beta\circ\alpha)}, \ff_{I(\beta)} \circ \ff_{I(\alpha)})$ evaluates to 0 and since $\S$ is a metric logical category, we have
$I(\beta\circ\alpha)=I(\beta)\circ I(\alpha)$
showing that $I$ is a functor.

%Since we are assuming that $\R$ is complete, $T_\R\models \ff_{\beta}\circ\ff_\alpha \sim\ff_{\beta\alpha}$, and by Theorem \ref{thm:soundness}, $T_\R\models_\S \ff_{\beta}\circ\ff_\alpha \sim\ff_{\beta\alpha}$. Since we are assuming that $\M$ is a model of $T_\R$, $\M\models \ff_{\beta}\circ\ff_\alpha \sim\ff_{\beta\alpha}$, so that $I(\alpha\circ\beta)=\M(\ff_{\beta\alpha})=\M(\ff_\beta\circ\ff_\alpha)=\M(\ff_\beta)\circ\M(\ff_\alpha)=I(\beta)\circ I(\alpha)$, showing that $I$ is a functor.

We now need to define $i$.  Suppose $A\in\R$ and $\varphi \in \Lang(A)$.  Then $R_\varphi \in \Ell_\R$ and we can define 
$i_A:\Lang(A) \to \Lang(I(A))$ by $i_A(\varphi) = \M(\RR_\varphi)$.  We need to see that this defines a continuous logical algebra embedding and that it is natural between $\Lang_\R$ and $\Lang_\S \circ I$. Towards the first, 
suppose $u:[0,1]^n\to[0,1]$ is continuous, and $b_1,...,b_n\in\Lang(A)$. Let $b=\ff_u(b_1,..,b_n)$, and consider the formula $|\RR_b - \uu(\RR_{b_1},...,\RR_{b_n})|$. Since $b=\ff_u(b_1,..,b_n)$, this formula is in $T_\R$ and so $\M$ satisfies it in $\S$.  Unravelling this, this means that $\M(\RR_b) = \uu(\M(\RR_{b_1}),\ldots,\M(\RR_{b_n}))$ which shows that $i_A$ is a continuous logical algebra homomorphism.  That it is an embedding follows from the fact that if $\M$ satisfies $|\RR_a - \RR_b|$ then this must be true in $T_\R$ and hence it must be the case that $a = b$.

%Therefore, for every $\epsilon$, the sequent $\{\RR_b\leq\epsilon\}\Iff\{\uu(\RR_{b_1},...,\RR_{b_n})\leq\epsilon\}$ is in $T_\R$. and since $\M$ is a model of $T_\R$, this implies  $\M\models \{\RR_b\leq\epsilon\}\Iff\{\uu(\RR_{b_1},...,\RR_{b_n})\leq\epsilon\}$ for every $\epsilon$. Therefore, $\M(\RR_b)=\M(\uu(\RR_{b_1},...,\RR_{b_n}))$. 
%
%%By completeness,  $T_\R\models \{\RR_b\leq\varepsilon\}\Iff\{\uu(\RR_{b_1},...,\RR_{b_n})\leq\varepsilon\}$ for every $\varepsilon$, which implies  $\M\models \{\RR_b\leq\varepsilon\}\Iff\{\uu(\RR_{b_1},...,\RR_{b_n})\leq\varepsilon\}$ for every $\varepsilon$. Therefore, $\M(\RR_b)=\M(\uu(\RR_{b_1},...,\RR_{b_n}))$. 
%
%
%Since $\M=I\circ\M_\R$, we have $$I(\ff_u(b_1,...,b_n))=I(\M_\R(\uu(\RR_{b_1},...,\RR_{b_n})))=\M(\uu(\RR_{b_1},...,\RR_{b_n}))$$ and from the definition of the action of $\M$ on formulas, we have $$\M(\uu(\RR_{b_1},...,\RR_{b_n}))=\ff_u(\M(\RR_{b_1}),...,\M(\RR_{b_n})))$$ Since $\M=I\circ\M_\R$, we get 
%$$\ff_u(\M(\RR_{b_1}),...,\M(\RR_{b_n})))=\ff_u(I(b_1),...,I(b_n))$$ which shows that $\iii$ commutes with the $\CLA$-operations. 
%Now to see that $i$ is natural, suppose $a\leq b\in\Lang(A)$, then $\M_\R\models \RR_b\dotminus\RR_a$. So $\M \models \RR_b \dotminus \RR_a$ and this tells us that $i_A(a) \leq i_A(s a model, $\M\models \{\RR_b\leq\varepsilon\}\Implies\{\RR_a\leq\varepsilon\}$ and we get $\iii(a)\leq\iii(b)$. This completes the proof that $I$ is logical.
Now to show that $i$ is natural, we need to suppose that $f:A \to B$ in $\R$ and show that $i_A\circ \Lang(f) = \Lang(I(f))\circ i_B$.  But this is known to the theory for if $b \in \Lang(B)$ then $\Ell_\R$ has an associated $\RR_b$.  If $a = \ff(b)$ then $T_\R$ satisfies $|\RR_b(\ff) - \RR_a|$.  Since $\M$ is a model of $T_\R$ in $\S$ we see that $i_A(a) = I(\ff)(i_B(b))$ which is the commutativity that we need.
\end{proof}
\begin{corollary}\label{thm:logical-functor-interpretation}
Let $\C$ and $\D$ be metric logical categories. There is a one-to-one correspondence between logical functors $\C\to\D$ and interpretations $(\Ell_\C,T_\C)\to(\Ell_\D,T_\D)$.
\end{corollary}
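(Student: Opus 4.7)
My plan is to assemble the desired bijection directly from the two universal properties already established, namely Theorem \ref{thm:universal-property-of-LR} and Theorem \ref{thm:universal-property-of-def-1}. The two directions of the correspondence are obtained by pre-- or post--composing with the canonical interpretations, and the proof that these operations are mutually inverse is essentially book-keeping enabled by the fact that $\M_\C$ (respectively $\M_\D$) sets up a bijection between the syntactic data of $\Ell_\C$ and the categorical data of $\C$.

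In more detail, I would first define a map $\Phi$ from $\hom(\C,\D)$ to the class of interpretations $(\Ell_\C,T_\C) \to (\Ell_\D,T_\D)$ by sending a logical functor $F:\C \to \D$ to the interpretation produced by Theorem \ref{thm:universal-property-of-LR} applied to the $\D$-model $F \circ \M_\C : (\Ell_\C,T_\C) \to \D$. (One first checks that $F \circ \M_\C$ really is a model of $T_\C$: if $\sigma \in T_\C$ then $\M_\C(\sigma) = 0$ in $\Lang_\C(1)$, and $F$, being a logical functor, sends this to $0$ in $\Lang_\D(1)$ via the continuous logical algebra homomorphism $i_1$; hence $F\circ\M_\C \models \sigma$.) This yields an interpretation $\Phi(F) = I_F$ satisfying $\M_\D \circ I_F = F \circ \M_\C$.

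Conversely, define $\Psi$ from interpretations to logical functors by sending $I:(\Ell_\C,T_\C) \to (\Ell_\D,T_\D)$ to the logical functor produced by Theorem \ref{thm:universal-property-of-def-1} (with $\R = \C$ and $\S = \D$) applied to the $\D$-model $\M_\D \circ I :(\Ell_\C,T_\C) \to \D$. This yields a logical functor $\Psi(I) = F_I : \C \to \D$ satisfying $F_I \circ \M_\C = \M_\D \circ I$.

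The final step is to verify that $\Psi$ and $\Phi$ are mutually inverse. For $\Psi\circ\Phi = \mathrm{id}$, given $F$, both $F$ and $F_{I_F}$ fit into the commutative triangle $\M_\D \circ I_F = F \circ \M_\C$, so by the uniqueness implicit in the construction of Theorem \ref{thm:universal-property-of-def-1} --- which defines the functor on objects, morphisms and predicates of $\C$ by applying $\M_\D \circ I_F$ to the corresponding sort, function and relation symbols of $\Ell_\C$, using the bijections set up by $\M_\C$ --- we must have $F_{I_F} = F$ on objects, morphisms and on each $\Lang_\C(A)$. For $\Phi\circ\Psi = \mathrm{id}$, given $I$, both $I$ and $I_{F_I}$ satisfy $\M_\D \circ (\cdot) = F_I \circ \M_\C$, and again the construction in Theorem \ref{thm:universal-property-of-LR} makes this interpretation unique because it is read off from $\M_\C^{-1}$ on sorts, function and relation symbols. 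The only subtlety, which is the main thing to check carefully, is that the assignments defined in Theorem \ref{thm:universal-property-of-def-1} really do recover a logical functor (product preservation, a natural transformation $i$, continuous logical algebra embeddings on each fibre, and preservation of the distinguished metric) from the model $\M_\D \circ I$; but these follow as in the proof of Theorem \ref{thm:universal-property-of-def-1}, since $I$ is required to send the metric symbol $\dd_{\SS_A}$ to the metric symbol $\dd_{\SS_{I(A)}}$ (equivalently, $i_A(d^\C_A) = d^\D_{F_I(A)}$) and this is exactly clause (4) in the definition of logical functor.
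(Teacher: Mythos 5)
Your proposal is correct and follows essentially the same route as the paper: send a logical functor $F$ to the interpretation induced by the $\D$-model $F\circ\M_\C$ via Theorem \ref{thm:universal-property-of-LR}, send an interpretation $I$ to the logical functor induced by the $\D$-model $\M_\D\circ I$ via Theorem \ref{thm:universal-property-of-def-1}, and observe that the two constructions are mutually inverse. The paper simply asserts the inverse property as ``clear,'' whereas you spell out the uniqueness bookkeeping; this is a welcome elaboration, not a divergence.
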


\proof Suppose that $I:\C \to \D$ is a logical functor.  We have the canonical models $M_\C$ and $M_\D$ of the theories $(\Ell_\C,T_\C)$ and $(\Ell_\D,T_\D)$ respectively.  Then $I \circ \M_\C$ is a $\D$-model of $(\Ell_\C,T_\C)$.  By Theorem \ref{thm:universal-property-of-LR}, this induces an interpretation from $(\Ell_\C,T_\C)\to(\Ell_\D,T_\D)$.  On the other hand, if we have an interpretation $I$ from $(\Ell_\C,T_\C)\to(\Ell_\D,T_\D)$ then $\M_\D \circ I$ is a $\D$-model of $(\Ell_\C,T_\C)$.  By Theorem \ref{thm:universal-property-of-def-1}, this induces a logical functor from $\C\to\D$.  These processes are clearly inverses of each other which is what we wished to show. \qed

We now wish to consider the collection of metric logical functors between two metric logical categories as a category itself.  Towards this end we define
\begin{definition}
Let $\FF,\GG:\R\to\S$ be logical functors, and write $\FF=(\FFF, \iii_\FF)$ and $\GG=(\GGG, \iii_\GG)$. A {\em logical transformation} $h:\FF\to\GG$ is a pair $h=(\eta, \epsilon)$ where $\eta:\FFF\to\GGG$ and  $\epsilon:\Lang\GGG\to\Lang\FFF$ are a natural transformations such that for every $A\in\R$, the diagram 
$$\xymatrix{
{\Lang(A)\ar[r]^{\iii_{\FF,A}}\ar[dr]_{\iii_{\GG,A}}}&{\Lang(\FFF(A))}\\
{}&{\Lang(\GGG(A))\ar[u]_{\epsilon_A}}
}$$ is commutative.
\end{definition}
The category of metric logical functors between $\R$ and $\S$ with logical transformations as morphisms will be denoted $\hom^*(\R,\S)$. 
%In the special case where $\S=\Metric$, we have the forgetful functor $F:\Metric\to\Set$, which induces a functor $F:\hom^*(\R,\Metric)\to\hom(\R,\Set)$. This functor is akin to the forgetful functor $\Mod^*(\Ell, T)\to\Mod(\Ell, T)$ defined earlier.  

\begin{theorem}\label{thm:models-are-equivalent-to-logical-functors}\label{thm:models-and-logical-functors-on-C}
Let $\C$ be a metric logical category. Then there is an equivalence of categories $$\Mod^*(\Ell_\C,T_\C)\cong\hom(\C,\Metric)$$
\end{theorem}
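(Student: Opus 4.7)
The plan is to construct mutually inverse functors
\[
\Phi \colon \hom(\C, \Metric) \longrightarrow \Mod^*(\Ell_\C, T_\C) \quad \text{and} \quad \Psi \colon \Mod^*(\Ell_\C, T_\C) \longrightarrow \hom(\C, \Metric),
\]
using Theorem \ref{thm:universal-property-of-def-1} as the main technical input. On objects, $\Phi$ sends a logical functor $I = (\FFF, \iii)$ to the composite $I \circ \M_\C$, which on generators is $\SS_A \mapsto \FFF(A)$, $\ff_\alpha \mapsto \FFF(\alpha)$, and $\RR_\varphi \mapsto \iii_A(\varphi)$. This lands in $\Mod^*$ because condition (4) in the definition of logical functor ensures $\iii$ sends the distinguished metric on $A$ to the actual metric on $\FFF(A)$, while the CLA-embedding property of $\iii_A$ together with preservation of products and the quantifier adjoints by $I$ yield $(I \circ \M_\C)(\sigma) = I(\M_\C(\sigma)) = I(0) = 0$ for every $\sigma \in T_\C$. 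Conversely, $\Psi(\M)$ is the logical functor supplied by Theorem \ref{thm:universal-property-of-def-1} applied with $\R = \C$ and $\S = \Metric$, explicitly $\Psi(\M)(A) = \M(\SS_A)$, $\Psi(\M)(\alpha) = \M(\ff_\alpha)$, and $\iii_{\Psi(\M),A}(\varphi) = \M(\RR_\varphi)$. The identities $\Phi \circ \Psi = \mathrm{id}$ and $\Psi \circ \Phi = \mathrm{id}$ on objects are then immediate from these formulas.

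On morphisms, a logical transformation $(\eta, \epsilon) \colon \FF \to \GG$ is sent by $\Phi$ to the candidate elementary map with component $\eta_A \colon \FFF(A) \to \GGG(A)$ at each sort $\SS_A$. Verifying elementarity, which requires $\Phi(\FF)(\varphi(\tvar{x})) = \Lang(\eta_A)(\Phi(\GG)(\varphi(\tvar{x})))$ for every formula, proceeds by induction on $\varphi$: the atomic case $\RR_\psi$ is the coherence triangle $\iii_{\FF,A} = \epsilon_A \circ \iii_{\GG,A}$ combined with $\epsilon_A = \Lang(\eta_A)$ on the image of $\iii_{\GG,A}$; the connective step uses that $\Lang(\eta_A)$ is a CLA-homomorphism; the term step uses the naturality square for $\eta$ against the morphisms $\FFF(\alpha), \GGG(\alpha)$; and the quantifier step uses preservation by logical functors of the adjoints $\forall_\pi, \exists_\pi$, which in $\Metric$ compute as $\sup$ and $\inf$ over fibers. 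Conversely, $\Psi$ sends an elementary map $f \colon \M \to \N$ to the logical transformation $(\eta, \epsilon)$ with $\eta_A := f_{\SS_A}$ (under the identification $\Psi(\M)(A) = \M(\SS_A)$) and $\epsilon_A := \Lang(\eta_A)$; naturality of $\eta$ is commutation of $f$ with the function symbols $\ff_\alpha$, and the coherence triangle is elementarity applied to the atomic $\RR_\varphi$.

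The main obstacle is the apparent slack in the auxiliary datum $\epsilon$ of a logical transformation: a priori $\epsilon_A$ is determined by coherence only on the image of $\iii_{\GG,A}$, which would permit distinct logical transformations to induce the same elementary map. To close the loop I would argue that in the target $\Metric$ the canonical choice $\epsilon = \Lang(\eta)$ is forced by naturality: morphisms into $[0,1]$ separate elements of $\Lang(X)$ for $X \in \Metric$, so agreement with $\Lang(\eta_A)$ on the image $\iii_{\GG,A}(\Lang(A))$ together with naturality of $\epsilon$ with respect to all $\Lang$-morphisms pins it down on all of $\Lang(\GGG(A))$. Once this is settled, functoriality of $\Phi$ and $\Psi$ and the verification that the two composites are identity on morphisms reduce to routine unwinding on generators.
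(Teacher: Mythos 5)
Your construction coincides with the paper's own: the object-level correspondence is exactly the one furnished by Theorem \ref{thm:universal-property-of-def-1} in one direction and by composition with $\M_\C$ in the other, and on morphisms the paper likewise sends an elementary map $h$ to the pair $\eta_A=h_{\SS_A}$, $\epsilon_A=\Lang(h_{\SS_A})$ and sends a logical transformation $(\eta,\epsilon)$ back to $h_{\SS_A}=\eta_A$, verifying elementarity formula by formula. Up to the point where you try to pin down $\epsilon$, the proposal and the paper's proof are the same argument.

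The step you add is where the gap lies. Everything in your morphism-level verification funnels through the claim that $\epsilon_A$ acts as $\Lang(\eta_A)$: already the atomic case of your elementarity induction needs $\epsilon_A(\iii_{\GG,A}(\psi))=\iii_{\GG,A}(\psi)\circ\eta_A$, and faithfulness needs $\epsilon_A=\Lang(\eta_A)$ on all of $\Lang(\GGG(A))$. The definition of logical transformation supplies neither: the coherence triangle only asserts $\epsilon_A\circ\iii_{\GG,A}=\iii_{\FF,A}$ and imposes no relation whatsoever between $\epsilon$ and $\eta$ (e.g.\ $\FF=\GG$, $\epsilon=\mathrm{id}$, $\eta$ an arbitrary natural endotransformation of $\FFF$ satisfies the definition). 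Your proposed remedy --- that naturality of $\epsilon$ plus separation of points by maps into $[0,1]$ forces $\epsilon=\Lang(\eta)$ --- does not go through, because $\epsilon$ is a natural transformation of the functors $\Lang\circ\GGG,\Lang\circ\FFF:\C\to\CLA$; the only naturality squares available are those indexed by morphisms $\alpha$ of $\C$, i.e.\ against maps of the form $\Lang(\GGG(\alpha))$, and there is no square to invoke against an arbitrary uniformly continuous map $\GGG(A)\to[0,1]$. Since in $\Metric$ the algebra $\Lang(\GGG(A))$ is \emph{all} uniformly continuous $[0,1]$-valued functions, which is in general strictly larger than $\im\iii_{\GG,A}$, the slack is real. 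You have correctly identified a genuine defect --- the paper's own proof silently assumes $\epsilon\iii(\varphi)(x)=\iii(\varphi)(\eta(x))$ in its final computation without justification --- but your patch does not close it; what is actually needed is either to build the compatibility $\epsilon_A\circ\iii_{\GG,A}=\Lang(\eta_A)\circ\iii_{\GG,A}$ (and, for faithfulness, $\epsilon_A=\Lang(\eta_A)$) into the definition of logical transformation, or to restrict $\Lang$ of an object of $\Metric$ to the subalgebra generated by the image of $\iii$.
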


\begin{proof}
Let $\M,\N:(\Ell_\C, T_\C)\to\Metric$ be models, and suppose $h:\M\to\N$ is an elementary map. Let $A$ be an object in $\C$, and let $\SS_A$ be the corresponding sort in $\Ell_\C$. From the definition of $F_\M$ and $F_\N$, we have that $F_\M(A)=\M(\SS_A)$, and $F_\N(A)=\N(\SS_A)$. Define $\eta_A:F_\M(A)\to F_\N(A)$ via $\eta_A(x)=h_{\SS_A}(x)$. %Let $\alpha:A\to B$ be a morphism in $\C$, and let $\ff_\alpha:\SS_A\to\SS_B$ be the corresponding function symbol in $\Ell_\C$. 

Let $\alpha:A\to B$ be a morphism in $\C$, and let $\ff_\alpha:\SS_A\to\SS_B$ be the corresponding function symbol in $\Ell_\C$. Then by definition $$\eta_B F_\M(\alpha)=h_{\SS_B}\M(\ff_\alpha)=\N(\ff_\alpha) h_{\SS_A}=F_\N(\alpha)\eta_A$$ the equality in the middle holds because $h$ is elementary. For the definition of $\varepsilon:\Lang F_\N\to\Lang F_\M$, suppose $b\in\Lang(F_\N(A))$. By definition, $b$ is a function $\N(\SS_A)\to[0,1]$. Since $h_{\SS_A}:\M(\SS_A)\to\N(\SS_B)$, we get that $b h_{\SS_A}:\M(\SS_A)\to [0,1]$. We define $\varepsilon_A(b)= b h_{\SS_A}$.

If $\alpha:A\to B$ is a morphism in $\C$ and consider the diagram
$$\xymatrix{
{\Lang(F_\N(A))\rrto^{\varepsilon_A}}&{}&{\Lang(F_\M(A))}\\
{\Lang(F_\N(B))\rrto^{\varepsilon_B}\ar[u]^{\Lang(F_\N(\alpha))}}&{}&{\Lang(F_\M(B))\ar[u]_{\Lang(F_\M(\alpha))}}
}$$ By definition, $\Lang(F_\N(\alpha))(b)=b\circ F_\N(\alpha)$, so that $$\varepsilon_A(\Lang(F_\N(\alpha))(b))=\epsilon_A(b\circ F_\N(\alpha))=b\circ F_\N(\alpha)\circ h_S$$ and $\Lang(F_\M(\alpha))(g)=g\circ F_\M(\alpha)$, so that $$\Lang(F_\M(\alpha))(\epsilon_B(b))=\Lang(F_\M(\alpha))(b\circ h_R)=b\circ h_R\circ F_\M(\alpha)$$ Since $h$ is an elementary map, $h_R\circ F_\M(\alpha)=F_\N(\alpha)\circ h_S$, which shows that the diagram is commutative, and that $\epsilon$ is a natural transformation.

Finally, for every object $A\in\C$, if $\varphi\in\Ell(A)$, then $\iii_{\M,A}(\varphi)=\M(\varphi)$ by definition, so that $\iii_{\M,A}(\varphi)=\M(\varphi)=\N(\varphi)\circ h_S=\iii_{\N,A}(\varphi)\circ h=\epsilon_A(\iii_{\N,A})$, which is the required property for $\epsilon$. This completes the proof that $(\eta, \epsilon)$ is a logical transformation. It is clear from the definition that this definition commutes with composition.

Now let $F,G:\C\to\Metric$ be logical functors, and consider the models $\M_F$ and $\M_G$. In what follows if $\SS$ is a sort of $\Ell_\C$, we write $A_\SS$ for the corresponding object of $\C$. Define $h:\M_F\to\M_G$ via $h_\SS(x)=\eta_{A_\SS}(x)$. Since $\eta$ is a natural transformation, it is clear that $h_\SS$ commutes with all the function symbols of $\Ell_\C$. 

It remains to show that $h$ thus defined preserves the value of all formulas of $\Ell_\C$. Let $\varphi(\tvar{x})$ be a formula of $\Ell_\C$. Then
\begin{eqnarray*}
\N(\varphi(\tvar{x}))(h(\overline{x}))&=& \N(\varphi(\tvar{x}))(h_{\SS_1}(x_1), ..., h_{\SS_n}(x_n))\\
{} &=& \iii(\varphi(\tvar{x}))(\eta_{A_{\SS_1}}(x_1), ..., \eta_{A_{\SS_n}}(x_n))\\
{} &=&\epsilon\iii(\varphi(\tvar{x}))(x_1, ..., x_n)
\end{eqnarray*}

%Consider the diagram
%$$\xymatrix{{\Mod(\Ell_\C, T_\C)\ar@/^{1pc}/[rr]\ar@/^{1pc}/[dd]^{(-)^*}}&{\cong}&{\hom(\C,\Set)\ar@/^{1pc}/[dd]^{(-)^*}\ar@/^{1pc}/[ll]}\\{\vdash}&{}&{\vdash}\\{\Mod^*(\Ell_\C, T_\C)\ar@/^{1pc}/[rr]\ar@/^{1pc}/[uu]^{F}}&{}&{\hom(\C, \Metric)\ar@/^{1pc}/[uu]^{F}\ar@/^{1pc}/[ll]}}$$ To show that the two arrows at the bottom define an equivalence of categories, note that on the objects and arrows of $\Mod^*(\Ell_\R, T_\R)$, the arrow is the identity, and similarly on the objects and morphisms of $\hom(\R, \Metric)$.
\end{proof}

\begin{corollary}\label{cor:satisfaction-via-metric-models}
Let $\R$ be a metric logical category, then  for every statement $\sigma(\tvar{x})$, we have $\R\models\sigma(\tvar{x})$ if and only if $\M\models\sigma(\tvar{x})$ for every metric logical functor $\M:\R\to\Metric$.
\end{corollary}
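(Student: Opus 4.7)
The plan is to reduce Corollary~\ref{cor:satisfaction-via-metric-models} to the equivalence of categories between logical functors and metric models given by Theorem~\ref{thm:models-are-equivalent-to-logical-functors}, and then invoke soundness (Proposition~\ref{metric-proof-system}) in the nontrivial direction. Throughout I read $\R \models \sigma(\tvar{x})$ as $\M_\R(\sigma(\tvar{x}))=0$ in $\Lang_\R(\tvar{x})$, equivalently $\sup_\tvar{x}\sigma(\tvar{x})\in T_\R$, and I read $\M\models \sigma(\tvar{x})$ for a metric logical functor $\M\colon\R\to\Metric$ to mean that the metric model of $(\Ell_\R,T_\R)$ corresponding to $\M$ under the equivalence of Theorem~\ref{thm:models-are-equivalent-to-logical-functors} satisfies $\sigma$ in the sense of continuous logic (equivalently, that $\sup_\tvar{x}\sigma(\tvar{x})$ evaluates to $0$ there).

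For the forward direction, assume $\R \models \sigma$, so $\sigma\in T_\R$. Given any metric logical functor $\M\colon\R\to\Metric$, Theorem~\ref{thm:models-are-equivalent-to-logical-functors} produces a corresponding metric model $\M_\ast\in\Mod^*(\Ell_\R,T_\R)$. Since $\M_\ast$ is a model of $T_\R$ it satisfies every sentence of $T_\R$, including $\sup_\tvar{x}\sigma(\tvar{x})$, and so $\M\models\sigma(\tvar{x})$.

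For the converse, assume that $\M\models\sigma(\tvar{x})$ for every metric logical functor $\M\colon\R\to\Metric$. Running the equivalence of Theorem~\ref{thm:models-are-equivalent-to-logical-functors} in the other direction, every object $\M_\ast$ of $\Mod^*(\Ell_\R,T_\R)$ satisfies $\sigma$. By Theorem~\ref{thm:completion-of-metric-pre-structure}, every $\Set$-valued model of $T_\R$ completes to a metric model satisfying the same sentences, so every model of $T_\R$ satisfies $\sup_\tvar{x}\sigma(\tvar{x})$; that is, $T_\R\models \sup_\tvar{x}\sigma(\tvar{x})$ in the standard sense of continuous logic. Proposition~\ref{metric-proof-system} then yields $\sup_\tvar{x}\sigma(\tvar{x})\in T_\R$, i.e.\ $\R\models\sigma(\tvar{x})$.

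The main obstacle is the converse, which cannot be argued purely categorically: one has to bridge metric models back to all models of $T_\R$ via the completion adjunction and then invoke the soundness/completeness package encapsulated in Proposition~\ref{metric-proof-system}. A subtler bookkeeping point worth flagging is that the definition of a logical functor asks only that $\ii$ be a continuous logical algebra embedding; commutation with the quantifiers is not stated explicitly but is needed to identify the functor-level notion of satisfaction with the model-level one, and that identification is precisely what Theorem~\ref{thm:models-are-equivalent-to-logical-functors} already packages up for us.
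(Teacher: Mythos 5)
Your argument is correct and follows what is evidently the intended derivation: the paper states this corollary without proof immediately after Theorem~\ref{thm:models-are-equivalent-to-logical-functors}, and the only ingredients available are exactly the ones you use --- the equivalence $\Mod^*(\Ell_\R,T_\R)\cong\hom(\R,\Metric)$ to pass between functors and metric models, and Proposition~\ref{metric-proof-system} to convert semantic entailment $T_\R\models\sigma$ back into membership in $T_\R$. Your closing remark about quantifier commutation being absorbed into Theorem~\ref{thm:models-are-equivalent-to-logical-functors} is a fair and accurate observation, not a gap.
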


\section{Conceptual completeness: model theory version}\label{sec:completion-of-languages}

\subsection*{Imaginaries in continuous logic}
Before we give the proof of the conceptual completeness theorem, we need to remind the reader of the construction of imaginaries in continuous logic.  There are a couple extra wrinkles beyond the discrete first order case.  Presentations of imaginaries in continuous logic appear in \cite{scc-continuous-logic}, \cite{model-theory-for-metric-structures}, \cite{hart-makkaifest} and \cite{muenster-paper}.

Suppose that $(\Ell,T)$ is a metric theory.  Then $(\Ell^{\eq},T^{\eq})$ is the smallest expansion of $(\Ell,T)$ satisfying the following closure properties:
\begin{example}\label{Ex.axioms}\mbox{ }
\begin{enumerate}
\item \label{Ex.ctble-prod}{\bf Closure under countable products:} If $(\SS_n : n < \omega)$ is a sequence of sorts in $\Ell^\eq$ then there is a sort $\SS$ in $\Ell^\eq$ with metric symbol $d_\SS$ together with function symbols
$\pi_n:\SS \rightarrow \SS_n$.  $T^\eq$ contains, for all $n < \omega$, the sentences
\[
\forall_{x_1\in \SS_1}\ldots\forall_{x_n\in \SS_n} \exists_{y\in \SS} \bigwedge^n_{i=1} d_i(\pi_i(y),x_i)
\]
where $d_i$ is the metric on $\SS_i$, and
\[
\forall_{x,y \in \SS} | d_\SS(x,y) - \sum_{i=1}^n \frac{d_i(\pi_i(x),\pi_i(y))}{2^i} | \dotminus \frac{1}{2^n}
\]
\item\label{Ex.def-sets}{\bf Closure under definable sets:} If $A(\var{x}_1,...,\var{x}_n)$ is a definable set in $T^\eq$
then there is a sort $\SS_A$ in $\Ell^\eq$ with metric symbol $\dd_A$, and function symbols $\ff_i:\SS_A\to\DOM(\tvar{x}_i)$ for $1\leq i\leq n$.
% and a relation symbol $\RR_A\subseteq\DOM(\tvar{x}_1)\times\cdots\times\DOM(\tvar{x}_n)$. 
$T^\eq$ contains the sentences
\[
|A(\var{x}_1,...,\var{x}_n) - \exists_{\var{y}}\max\{\dd_1(\var{x}_1,\ff_1(\var{y})),ldots,\dd_n(\var{x}_n,\ff_n(\var{y}))\}|
\]
where $\dd_i$ is the metric symbol on $\SS_i$, and
\[
|\dd_A(\tvar{x}, \tvar{y})-\dd_1(\ff_1(\tvar{x}),\ff_1(\tvar{y}))\And \cdots\And \dd_n(\ff_n(\tvar{x}),\ff_n(\tvar{y}))|
\]

\item\label{Ex.can-para}{\bf Closure under canonical parameters:} If $\varphi(\tvar{x},\tvar{y})$ is a formula in $\Ell^\eq$ then there is a sort $\SS_\varphi$ in $\Ell^\eq$ with metric symbol $\dd_\varphi$ and a function symbol $\pi_\varphi:\DOM(\tvar{y}) \rightarrow \SS_\varphi$.  $T^\eq$ contains the following sentences:
\[
\forall_{y,y'} |\dd_\varphi(\pi_\varphi(y),\pi_\varphi(y')) - \forall_x(\varphi(x,y) - \varphi(x,y'))|
\]
and 
\[
\forall_z \exists_y (\dd_\varphi(\pi_\varphi(y),z)
\]
\item\label{Ex.unions}{\bf Closure under finite unions:} If $\varphi_1(\tvar{x},\tvar{y_1}),\ldots,\varphi(\tvar{x},\tvar{y_n})$ are formulas in $\Ell^\eq$ then there is a sort $\SS$ in $\Ell^\eq$ with metric symbol $\dd$, and function symbols $i_j:\SS_{\varphi_j} \rightarrow \SS$ in $\Ell^\eq$.  $T^\eq$ contains the sentences
\[
\forall_{x \in S} \bigvee_{j=1}^n \exists_y \dd(x,i_j(y))
\]
and for all $1\leq j,k \leq n$
\[
\forall_y\forall_z |\dd(i_j(y),i_k(z))-\forall_x|\varphi_j(x,y) - \varphi_k(x,z)||
\]

\end{enumerate}
\end{example}

A few comments are in order.  
\begin{enumerate}
\item Regarding closure under countable products, it follows from the two axioms listed that the sort $\SS$ is bijective with $\prod_n \SS_n$ and that the metric on $\SS$ is induced by the metric $\sum_i \frac{d_i}{2^i}$.  
\item For closure under definable sets, the theory is expanded so that quantification over a definable set is provided by quantification over its own sort.  
\item For a formula $\varphi(\tvar{x},\tvar{y})$, the function induced on $\DOM(\tvar{x})$ by $\varphi$ is captured by the image of $\tvar{y}$ in the sort $\SS_\varphi$ which mimics the construction of canonical parameters in the discrete case. We abuse notation by writing $\varphi(\tvar{x},\var{y})$ for $\var{y}$ of sort $\SS_\varphi$ to mean $\varphi(\tvar{x},\tvar{y})$ for any $\tvar{y}$ such that $\pi_\varphi(\tvar{y}) = \var{y}$. 
\item It is convenient although not entirely necessary to consider finitely many formulas all inducing functions on the same sort and to take the union of the corresponding sets of canonical parameters.  We include the finite union of such canonical parameter sorts to cover the most general situation.  In practice this can usually be avoided.  For instance, suppose one has two sorts $\SS_1$ and $\SS_2$ as described in 4, i.e.\ functions from a single sort $X$ to $[0,1]$, and a single sort $C$ with exactly two elements designated 0 and 1.  We can then form $C \times \SS_1 \times \SS_2$ together with the function on $X$, $\varphi(i,a,b)$ defined by:
\[
\varphi(0,a,b)(x) = a(x) \text{ and } \varphi(1,a,b)(x) = b(x).
\]
If we quotient by the kernel of $\varphi$, the resulting object is effectively the union of $\SS_1$ and $\SS_2$.  Of course this can be repeated for any finite number of  sorts as in 4 and so the closure under finite unions is covered by the other three clauses whenever there is a sort with two distinct constants.
\end{enumerate}
It is reasonably clear that each of these closure properties provides a conservative expansion to the theory $T$ and so we record the following

\begin{theorem}\label{Teq-is-conservative}
If $(\Ell,T)$ is a metric theory then the forgetful function 
\[
F:Mod(\Ell^\eq,T^\eq) \rightarrow Mod(\Ell,T)
\]
 is an equivalence of categories i.e. $T^\eq$ is a conservative expansion of $T$.
\end{theorem}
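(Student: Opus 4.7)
The plan is to verify, separately for each of the four closure operations in Example~\ref{Ex.axioms}, that the step-forgetful functor from models of the expanded theory to models of the current theory is an equivalence of categories. The full statement for $T^{\eq}$ then follows because $T^{\eq}$ is built by a well-founded iteration of these four operations (chosen so as eventually to close under all of them), and the composition (or chain-colimit) of such equivalences is again an equivalence.

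For each step I would exhibit a concrete interpretation of the new symbols satisfying the axioms, and show the axioms force it up to canonical isometric isomorphism. For countable products, interpret $\SS$ as $\prod_n \M(\SS_n)$ with metric $\sum_n d_n/2^n$ and each $\pi_n$ as the coordinate projection; completeness is automatic. For definable sets, interpret $\SS_A$ as the zero-set $Z_\M(A)$, which is closed (hence complete) by Theorem~\ref{thm:syntactic-definability}, with each $\ff_i$ the coordinate inclusion and $\dd_A$ the induced metric. For canonical parameters, define $y \sim_\varphi y'$ iff $\sup_{\tvar{x}} |\varphi(\tvar{x},y) - \varphi(\tvar{x},y')| = 0$ and interpret $\SS_\varphi$ as the metric completion of $\M(\DOM(\tvar{y}))/{\sim_\varphi}$, with $\pi_\varphi$ the quotient-then-inclusion map. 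For finite unions, interpret $\SS$ as the quotient of the disjoint union $\bigsqcup_j \SS_{\varphi_j}$ by the equivalence relation dictated by the metric axiom, with the $i_j$ the obvious inclusions.

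In every case uniqueness up to canonical isometric isomorphism is forced by the axioms, which completely pin down the new metric and the image of each new function symbol. Functoriality on elementary maps $h:\M \to \N$ is automatic: $h$ extends coordinatewise for products, restricts to zero-sets for definable sets, descends to quotients and extends continuously to completions for canonical parameters, and acts componentwise for finite unions, with well-definedness guaranteed because $h$ preserves every formula appearing in the closure axioms. This gives essential surjectivity and full faithfulness of each step-forgetful functor, so by iteration $F$ is an equivalence.

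The step I expect to be the principal obstacle is the canonical parameter construction. The quotient $\M(\DOM(\tvar{y}))/{\sim_\varphi}$ need not be complete in $d_\varphi$, so one must pass to its metric completion; the density axiom then asserts ``$\pi_\varphi$ has dense image'' rather than ``$\pi_\varphi$ is surjective''. Verifying that the extension to the completion is functorial in elementary maps, and that interpretations of symbols built on top of $\SS_\varphi$ at subsequent stages remain uniformly continuous on the completion, is where the care is concentrated; the other three operations stay inside finite products, closed subsets, and finite disjoint unions of already complete spaces and are routine.
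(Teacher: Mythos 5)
Your proposal is correct and is exactly the argument the paper has in mind: the paper offers no written proof here, remarking only that ``it is reasonably clear that each of these closure properties provides a conservative expansion,'' and your step-by-step verification of the four closure operations (with the canonical interpretations, iteration, and the extension of elementary maps) is the standard way to make that remark precise. You also correctly isolate the one genuinely delicate point, namely that the canonical-parameter sort is the \emph{completion} of the quotient, so that the density axiom rather than surjectivity is what the theory can assert.
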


%\subsection*{Conceptual Completeness}

\begin{definition}
Suppose $(\Ell, T)\subseteq (\Ell', T')$ are metric theories and $\M'\in\Mod(\Ell',T')$. Let $\M$ be its $\Ell$-reduct.  We say that $\M$ is {\em stably embedded in $\M'$} if and only if for every $\varepsilon>0$, and every $\Ell'$-formula $\varphi(\tvar{x}, \tvar{y})$, where $\DOM(\tvar{x})\in\Ell$ and $\DOM(\tvar{y})\in\Ell'$, there is an $\Ell$-formula $\psi(\tvar{x}, \tvar{z})$ such that  for every $\tvar{a} \in \DOM(\tvar{y})$ there is $\tvar{b} \in \DOM(\tvar{z})$
\[
\M'\models \forall_\tvar{x}|\varphi(\tvar{x}, \tvar{a})-\psi(\tvar{x},\tvar{b})|\leq\varepsilon.
\]
\end{definition}  

The above definition can be transferred easily to the case where we have an interpretation $I:(\Ell, T)\to (\Ell', T')$ by considering the image of $(\Ell, T)$ under $I$ as a subset of $\Ell'$. In this case we will say that $\M$ is stably embedded in $\M'$ via $I$.  
%By moving to a conservative expansion of $\Ell$, the conclusion of Lemma \ref{lem:beth-s-theorem} remains true if $P$ is a formula of $\Ell$, or a uniform limit of formulas. Also, an easy consequence of Lemma \ref{lem:beth-s-theorem} is that if $\psi(\tvar{x})$ is a formula of $\Ell$ whose value is determined by formulas of $\Ell_0$, then $\psi$ can be uniformly approximated by $\Ell_0$-formulas.

\begin{theorem}\label{lem:full-forgetful-functor-implies-stably-embedded}
	Let $I:(\Ell, T)\to(\Ell', T')$ be an interpretation, and consider the corresponding forgetful functor $F:\Mod(\Ell', T')\to\Mod(\Ell, T)$. If $F$ is full and faithful, then for every $\M'\in\Mod(\Ell', T')$, $F(\M')$ is stably embedded in $\M'$ via $I$.
\end{theorem}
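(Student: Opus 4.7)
The approach is to establish stable embeddedness via an automorphism-invariance argument that leverages fullness to lift $\Ell$-automorphisms of the reduct to $\Ell'$-automorphisms of a saturated elementary extension of $\M'$, and faithfulness to pin down their behavior on $\M'$. First, pass to a sufficiently saturated and strongly homogeneous $\Ell'$-elementary extension $\N'\succeq\M'$; the reduct $F(\N')$ inherits the corresponding saturation and homogeneity as an $\Ell$-structure, since any $\Ell$-type over a small parameter set in $F(\N')$ can be viewed via $I$ as an $\Ell'$-type and is realized by saturation of $\N'$.

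The key step is an automorphism-invariance calculation. Fix an $\Ell'$-formula $\varphi(\tvar{x},\tvar{y})$ with $\DOM(\tvar{x})$ in the image of $I$ and $\tvar{a}\in\DOM(\tvar{y})^{\M'}$, and consider the predicate $f_\tvar{a}(\tvar{c}):=\varphi^{\N'}(\tvar{c},\tvar{a})$ on $F(\N')^{\tvar{x}}$. I claim $f_\tvar{a}$ is $\operatorname{Aut}_\Ell(F(\N')/F(\M'))$-invariant. Given such a $g$, fullness of $F$ produces $h\in\operatorname{Aut}_{\Ell'}(\N')$ with $F(h)=g$. Then $h|_{\M'}:\M'\to\N'$ is an $\Ell'$-elementary map satisfying $F(h|_{\M'})=g|_{F(\M')}=\operatorname{id}_{F(\M')}$; since the inclusion $\M'\hookrightarrow\N'$ is another $\Ell'$-elementary lift of $\operatorname{id}_{F(\M')}$, faithfulness of $F$ forces $h|_{\M'}$ to equal that inclusion, so $h(\tvar{a})=\tvar{a}$. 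Elementarity of $h$ then yields $\varphi(g(\tvar{c}),\tvar{a})=\varphi(h(\tvar{c}),h(\tvar{a}))=\varphi(\tvar{c},\tvar{a})$, establishing the claim.

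To finish, I invoke the continuous-logic analog of the Svenonius / Ryll--Nardzewski criterion for definability: a uniformly continuous, $\operatorname{Aut}(\N/A)$-invariant predicate on a sufficiently saturated $\Ell$-structure is the uniform limit of $\Ell$-formulas with parameters from $A$. Applied with $A=F(\M')$, this gives, for every $\varepsilon>0$ and every $\tvar{a}$, an $\Ell$-formula $\psi_\tvar{a}(\tvar{x},\tvar{b})$ with $\tvar{b}\in F(\M')$ that approximates $f_\tvar{a}$ to within $\varepsilon$ on $F(\N')^{\tvar{x}}$, hence certainly on $F(\M')^{\tvar{x}}$. A standard compactness argument, realizing in a further saturated extension a hypothetical ``uniformly unapproximable'' $\tvar{a}^{*}$ that would contradict the pointwise conclusion, upgrades this to a single $\Ell$-formula $\psi(\tvar{x},\tvar{z})$ serving all $\tvar{a}\in\DOM(\tvar{y})^{\M'}$ (with parameters $\tvar{b}$ depending on $\tvar{a}$), which is exactly the stable embeddedness condition via $I$. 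The main obstacle will be the precise formulation and application of the Svenonius-type automorphism-invariance principle in the $[0,1]$-valued setting, together with the compactness upgrade from pointwise-in-$\tvar{a}$ to uniform approximation.
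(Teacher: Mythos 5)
Your proposal implements the same core mechanism as the paper's proof: use fullness of $F$ to lift an $\Ell$-elementary self-map that fixes $F(\M')$ pointwise and moves one realization of an $\Ell$-type to another, and use faithfulness to force the lift to restrict to the identity on $\M'$ (in particular to fix the parameter $\tvar{a}$), whence $\varphi(\tvar{x},\tvar{a})$ depends only on the $\Ell$-type over $F(\M')$ and is therefore a definable predicate over $F(\M')$. The difference is in the packaging: the paper realizes the ``moving map'' as the elementary embedding $h:\N\to\N^{U}$ into an ultrapower with $h(a)=\Delta(b)$ and $h\restriction_\M=\Delta_\M$, shows the partial type $\Sigma(x,y,c;n,\psi)$ is inconsistent, and extracts the finite set $\Delta_\varepsilon$ and the modulus $\delta$ by compactness; you realize it as an automorphism of a monster model and then invoke a Svenonius-type principle. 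Both routes end at the same place, namely continuity of $\varphi(\cdot,\tvar{a})$ on the $\Ell$-type space over $F(\M')$, and your closing compactness step (upgrading the pointwise-in-$\tvar{a}$ approximation to a single $\psi(\tvar{x},\tvar{z})$) is actually more explicit than what the paper writes down.

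The one step that needs repair is the assertion that $F(\N')$ ``inherits the saturation and homogeneity'' of $\N'$: the justification you give (every $\Ell$-type over a small set is realized because the corresponding $\Ell'$-type is) proves saturation of the reduct but says nothing about homogeneity, and in general the reduct of a strongly homogeneous structure need not be strongly homogeneous. Your argument only lifts automorphisms that already exist in $\operatorname{Aut}_{\Ell}(F(\N')/F(\M'))$, so you genuinely need that group to act transitively on the realizations of each $\Ell$-type over $F(\M')$, i.e., strong homogeneity of the reduct; without it, automorphism-invariance is strictly weaker than type-invariance and the Svenonius step does not apply. This is fixable: take $\N'$ saturated of its own cardinality $\kappa>|\M'|$, so that $F(\N')$ is likewise $\kappa$-saturated of cardinality $\kappa$ and hence strongly $\kappa$-homogeneous over $F(\M')$. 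Alternatively, replace automorphisms by elementary embeddings into an ultrapower, which is exactly how the paper's proof sidesteps the issue.
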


\begin{proof}
Fix a model $\M'\in\Mod(\Ell', T')$ and let $\M = F(\M')$.  Suppose $\psi(\var{x}, \var{z})$ is a formula where $\var{x}$ is a variable of sort $\SS$ in $\Ell$ and $\var{z}$ is a variable of sort $\SS'$ in $\Ell'$. Fix $c \in \SS'(\M')$ and define the set $\Sigma(x,y,c;n,\psi)$ of statements in $\Ell'_{\M'}$ where $y$ is also of sort $\SS$ as
\begin{enumerate}
\item the elementary diagram of $\M'$ in $\Ell'$,	
\item for every $\Ell_{\M}$-formula $\varphi$ and $k\in\Nat$, $|\varphi(\var{x})-\varphi(\var{y})|\leq 1/k$, and
\item $|\psi(\var{x}, \var{c})-\psi(\var{y}, \var{c})|\geq 1/n$.
\end{enumerate}
Suppose $\Sigma(x,y,c;n,\psi)$ is consistent, and let $\N'\models\Sigma(a,b,c;n,\psi)$, with $a,b\in\SS(\N')$. Let $\N = F(\N')$. Note that there is an elementary embedding $g:\M'\to\N'$. Since $\Sigma(a,b,c;n,\psi)$ implies that $a\equiv_{\M} b$, there is an ultrafilter pair $(I,U)$ and an embedding $h:\N\to \N^U$ such that $h(a)=\Delta(b)$ and $\restr{h}{\M}=\Delta_{\M}$ where $\Delta$ is the diagonal embedding of $\N$ into $\N^U$. Since $F$ is full and faithful, there is a unique elementary map $h':\N'\to\N'^U$ such that $F(h')=h$ and $\restr{h'}{\N}=\Delta$.  Since $h'(a)=\Delta(b)$, we have $\psi(a,c)=\psi(b,c)$, 
which is impossible, since $a,b$ realize $\Sigma$ and $|\psi(a,c)-\psi(b,c)|\geq 1/n$. Therefore, $\Sigma(x,y,c;n,\psi)$ is inconsistent for every $n$ and every $\psi$.

 By compactness, for every $\varepsilon>0$, there is a number $\delta>0$ and a finite subset $\Delta_\varepsilon$ of $\Ell$-formulas such that if $\displaystyle\bigwedge_{\varphi\in\Delta}|\varphi(x)-\varphi(y)|<\delta$, then $|\psi(x,c)-\psi(y,c)|<\varepsilon$.   This says that $\psi(x,c)$ defines a continuous function on the set of types over the parameters in $\M$ in the variable $x$.  From this we conclude that $\psi(x,c)$ is equivalent to a definable predicate in the language $\Ell_\M$.
 %Since the value of $\psi(\tvar{x},\tvar{c})$ is controlled by formulas of $\Ell$, the conclusion follows from Beth's Theorem.
\end{proof}

\begin{theorem}[Conceptual Completeness]\label{thm:weak-conceptual-completeness}
Let $I:(\Ell, T)\to(\Ell', T')$ be an interpretation, and suppose that the forgetful functor $I^*:\Mod(\Ell', T')\to\Mod(\Ell, T)$ is an equivalence of categories. Then there is an interpretation $J:(\Ell',T')\to (\Ell^\eq, T^\eq)$ such that $JI$ is the inclusion $(\Ell, T)\subseteq(\Ell^\eq, T^{\eq})$
\end{theorem}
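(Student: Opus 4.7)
The plan is to invoke Theorem \ref{lem:full-forgetful-functor-implies-stably-embedded} and package its output into an interpretation. Since $I^*$ is an equivalence of categories it is in particular fully faithful, so every $\M' \in \Mod(\Ell',T')$ has its reduct $I^*(\M')$ stably embedded in $\M'$ via $I$. I would define $J$ to act as the inclusion of $\Ell$ into $\Ell^\eq$ on sorts and symbols coming from $I$, which makes $JI$ the desired inclusion by fiat; the real work is to interpret the $\Ell'$-sorts $\SS'$ outside the image of $I$, together with the $\Ell'$-function and $\Ell'$-relation symbols that involve them.

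For each such $\SS'$ I would code each element $a \in \SS'(\M')$ by the canonical parameters of its ``$\Ell$-trace''. Enumerate countably many $\Ell'$-formulas $\varphi_n(\tvar{x}_n, y)$ whose free variables are $\Ell$-sorted on the $\tvar{x}_n$ side and of sort $\SS'$ on the $y$ side; by stable embeddedness, $\tvar{x}_n \mapsto \varphi_n(\tvar{x}_n, a)$ is an $\Ell$-definable predicate over $\M := I^*(\M')$, with canonical parameter $c_n(a)$ in an $\Ell^\eq$-sort $\SS_{\varphi_n}$ provided by Example \ref{Ex.can-para}. Assembling the $c_n$'s gives a map $\tau: \SS'(\M') \to \prod_n \SS_{\varphi_n}$ into a sort of $\Ell^\eq$ via Example \ref{Ex.ctble-prod}. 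The image of $\tau$ should be carved out by a $T^\eq$-definable predicate expressing compatibility with the axioms of $T'$; Example \ref{Ex.def-sets} then promotes this image to a sort of $\Ell^\eq$, which we declare to be $J(\SS')$. Function and relation symbols of $\Ell'$ with $\SS'$-arguments are interpreted by running the same coding on their graphs and rewriting the resulting $\Ell$-sorted formulas through stable embeddedness.

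The main obstacle is establishing injectivity of $\tau$ and definability of its image. For injectivity, the $\Ell$-trace only records $\Ell'$-formulas whose free variables are $\Ell$-sorted, so I would code all $\Ell'$-sorts simultaneously and induct on formula complexity, substituting codes for $\Ell'$-sorted parameters, in order to promote ``same trace'' to ``same full $\Ell'$-type over $\M$''. A saturation-and-automorphism argument in a sufficiently saturated elementary extension then produces an $\Ell'$-automorphism fixing $\M$ pointwise and sending $a$ to $b$; by faithfulness of $I^*$ any such automorphism projects to the identity on the $\Ell$-reduct and therefore equals the identity, forcing $a = b$. The definability of the image uses essential surjectivity of $I^*$: every model of $T$ extends to a model of $T'$, so every tuple of canonical parameters compatible with a countable scheme of conditions is actually realized, and this scheme can be packaged using Examples \ref{Ex.ctble-prod} and \ref{Ex.def-sets}. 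The verification that $JI$ coincides with the inclusion $(\Ell,T)\subseteq(\Ell^\eq,T^\eq)$ is then immediate from the construction.
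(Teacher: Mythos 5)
Your skeleton matches the paper's: code each element $a$ of a new $\Ell'$-sort by the canonical parameters of the $\Ell$-definable predicates $\tvar{x}\mapsto\varphi(\tvar{x},a)$ supplied by stable embeddedness, assemble these into a countable product of $\Ell^\eq$-sorts, and use faithfulness of $I^*$ to separate points. But there are concrete gaps. The largest is that you never invoke Beth's theorem, and your substitutes for it do not work. First, to promote the image of your coding map $\tau$ to a sort of $\Ell^\eq$ via closure under definable sets, you need it to be a definable set in the strong sense of Theorem \ref{thm:syntactic-definability} (its distance predicate must be definable); ``every parameter tuple compatible with a countable scheme of conditions is realized'' is a statement about which points lie in the image, not a construction of the distance predicate, and images of definable injections are not automatically definable in continuous logic. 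The paper gets this by observing that the image is cut out by an $\Ell'$-formula on an $\Ell^\eq$-sort and then applying Beth definability (using that the forgetful functor is an equivalence) to replace it by a $T^\eq$-definable predicate. Second, your plan to interpret $\Ell'$-relation and function symbols on the new sorts by ``rewriting through stable embeddedness'' fails for symbols whose arguments all lie in new sorts: stable embeddedness only controls formulas $\varphi(\tvar{x},\tvar{y})$ with $\tvar{x}$ ranging over $\Ell$-sorts, so it says nothing about, e.g., a relation $R(y)$ with $y$ of sort $\SS'$. The paper again routes this through Beth: after transporting $R$ along the definable bijection to the code sort $\SS_X\in\Ell^\eq$, the resulting formula has $\Ell^\eq$-sorted free variables and Beth makes it $T^\eq$-definable.

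Two further points. Your injectivity step is over-engineered and under-justified: there is no need to ``promote same trace to same full $\Ell'$-type'' by induction on formula complexity, because the type of $a$ over the reduct $\M$ only involves parameters from $\Ell$-sorts, so agreement of the traces $\varphi(\cdot,a)=\varphi(\cdot,b)$ for all $\Ell'$-formulas $\varphi(\tvar{x},y)$ with $\Ell$-sorted $\tvar{x}$ already \emph{is} equality of types over $\M$; the paper then shows by a compactness argument (inconsistency of the sets $\Sigma_n$) that a single countable family of such formulas suffices to separate points, realizing the type-equality by an elementary map into an ultrapower rather than an automorphism of a saturated extension, and concluding from faithfulness of $I^*$. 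Finally, you gloss over the fact that the $\Ell$-trace of $\varphi(\cdot,a)$ is only an $\varepsilon$-approximation by $\Ell$-formulas with parameters for each $\varepsilon>0$, so ``the canonical parameter $c_n(a)$'' does not live in a single canonical-parameter sort: the paper must take finite unions of canonical-parameter sorts $\UU_n$, form the countable product $\prod_n\UU_n$, and quotient by a pseudo-metric built from the forced-limit function $\mathrm{Flim}$ to obtain a genuine $\Ell^\eq$-sort in which the code is well defined. Without that construction the map $\tau$ is not defined into any sort of $\Ell^\eq$.
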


\begin{proof}
The strategy of the proof will be to take any sort $\SS$ in $\Ell'$ and show that there is a $T'$-definable injection $f:\SS \rightarrow \SS^*$ where $\SS^*$ is a sort in $T^{\eq}$.  Let's see why this will be enough.  The statement of conceptual completeness is a generalization of the classical theorem which we state in its continuous form.
\begin{theorem}[Beth's Theorem for Continuous Logic]\label{lem:beth-s-theorem}
Suppose that $(\Ell,T)$ is a metric theory, $T \subseteq T'$ and $T'$ is a metric theory in a language $\Ell'$ with no new sorts.  Further suppose that the forgetful functor $F:\Mod(\Ell',T') \rightarrow \Mod(\Ell,T)$ is an equivalence of categories.  Then every $\Ell'$-formula is $T'$-equivalent to a definable predicate in $\Ell$.
%Let $(\Ell, T)$ be a continuous theory, and let $\Ell_0\subseteq\Ell$. Let $P$ be any nonlogical symbol in $\Ell$, and suppose $S$ is {\em implicitly defined in $T$ over $\Ell_0$} in the sense that if $\M$ and $\N$ are models of $T$ such that $\restr{\M}{\Ell_0} = \restr{\N}{\Ell_0}$, then $\M(P) = \N(P)$. Then $P$ is defined over $\Ell_0$. 
\end{theorem}
The image of $f$, $X$, will be a $T'$-definable subset of $\SS^*$ and since $T'$ is a conservative extension of $T$, by Beth definability, $X$ is also $T$-definable.  So since $T^{\eq}$ has a sort representing $X$ as a separate sort, say $\SS_X$, we can interpret $\SS$ as $\SS_X$. Let  $j := f^{-1}\circ i_X : \SS_X \rightarrow \SS$ be the map which definably provides a bijection between these two sorts and where $i_X$ is the embedding of $\SS_X$ into $\SS^*$.  Once we have identified sorts in $\Ell'$ with sorts in $T^{\eq}$ we still need to interpret function and relation symbols.  However this will come automatically again from the Beth definability theorem.  To see how let's assume for simplicity that we have a unary relation $R(x)$ where the sort of $x$ is $\SS$.  Consider the $\Ell'$-formula $R(j(y))$ where $y$ is a variable of sort $\SS_X$.  Again, since $T'$ is a conservative expansion of $T$, this formula is equivalent to a $T$-definable predicate on $\SS_X$.  Handling the cases of higher arity and function symbols requires some bookkeeping but is a similar proof.  So it will suffice to find the $f$ mentioned above.

Toward this end, let $\SS$ be a sort of $\Ell'$
%, and let $\M:(\Ell', T')\to\Metric$ be a saturated model. Let $c\in\M(\SS)$, 
and consider a formula $\varphi(\tvar{x}, \var{y})$, where $\DOM(\var{y})=\SS$, and 
%$\DOM(\tvar{x})$ is coming from $\Ell$, in the sense that 
$\DOM(\tvar{x})=I(\SS_1)\times\cdots\times I(\SS_k)$, where each $\SS_i$ is a sort of $\Ell$. 
%We have that $\varphi(\tvar{x}, c)$ is an $(\Ell', T')$-definable predicate on $I^*(\M)$. 
By compactness and stable embeddedness, there is, for every $n$, a finite set $\Psi_n(\tvar{x}, \tvar{y}_1,...,\tvar{y}_{\ell(n)})=\{\psi_1(\tvar{x}, \tvar{y}_1),...,\psi_{\ell(n)}(\tvar{x}, \tvar{y}_{\ell(n)})\}$ of $\Ell$-formulas with the property that 
\[
T'\models \bigvee_{i=1}^{\ell(n)}\exists_{\tvar{y}_i}\big[|\varphi(\tvar{x}, y)-I(\psi_i(\tvar{x}, \tvar{y}_i))|\big]\leq\frac{1}{2^n}.
\]
 In $\Ell^\eq$, 
 %we can form the sort $\DOM(\tvar\omega{y}_i)_{\psi_i(\tvar{x}, \tvar{y}_i)}$ 
 let $\SS_i$ be the sort of canonical parameter of $\psi_i(\tvar{x}, \tvar{y}_i)$, and $\UU_n=\bigcup_{i=1}^{\ell(n)} \SS_i$; let's suppose that $d_n$ is the metric on $U_n$.
 %\DOM(\tvar{y}_i)_{\psi_i(\tvar{x}, \tvar{y}_i)}$. 
 Define $\SS^*_\varphi:=\prod_{n\geq 1}\UU_n$, and note that $\SS^*_\varphi$ is also a sort of $\Ell^\eq$.   
 
We need one small technical tool which is Lemma 3.7 from \cite{continuous-first-order-logic}.

\begin{lemma}
There is a continuous function $\text{Flim}: [0,1]^\omega \rightarrow [0,1]$ such that 
\begin{enumerate}
\item if $(a_n : n \in \omega)$ is a sequence in $[0,1]^\omega$ such that for all $m$, $|a_n - a_{n+1}| \leq 2^{-m}$ for all $n \geq m$ then
$\text{Flim}(a_n : n \in \omega) = \lim_n a_n$, and moreover
\item if $\lim_n a_n = b$ and $|b - a_n| \leq 2^{-n}$ then $\text{Flim}(a_n: n \in \omega) = b$.
\end{enumerate}
\end{lemma}

Consider the definable predicate $\psi(\bar a,\bar x) \Def \text{Flim}(a_n(\bar x))$ where we think of each element of $\UU_n$ as a function on $\DOM(\tvar{x})$.
 The pseudo-metric which captures the canonical parameters for $\psi$ is
 \[
 \rho(\overline{a}, \overline{b}) = \forall_{\tvar{x}}(\text{Flim}(a_n(\tvar{x}): n < \omega) - \text{Flim}(b_n(\tvar{x}): n < \omega)).
 \]
 Let  $\widehat{\SS_\varphi^*}=\SS_\varphi^*/\rho$ which is a sort in $T^\eq$.
 The point of this construction is that if we consider any model $\M$ of $T'$ and $c \in \SS(\M)$ we can choose $a_n \in \UU_n(\M)$ such that 
  \[
  \M \models |a_n(\tvar{x}) - \varphi(\tvar{x}, c)| \leq 2^{-n}.
  \]
    Of course the choice of $a_n$ is not unique and so this does not define a function from $\SS$ to $\SS^*_\varphi$. However any two sequences obtained this way are $\rho$-equivalent and hence this does define a function $f_\varphi:\SS \rightarrow \widehat{\SS_\varphi^*}$ with the property that $\varphi(x,c) = \varphi(x,c')$ iff $f_\varphi(c) = f_\varphi(c')$.
 %By identifying an element $\overline{c}\in\SS_\varphi^*$ with a sequence of formulas, we can define pseudo-metric on $\SS_\varphi^*$ via $\rho(\overline{x}, \overline{y})=\forall_{\tvar{z}}[FLim - FLim]$, and 

 %With this definition, we get a function $f^\M:\M(\SS)\to\M(\widehat{\SS_\varphi^*})$, which we can view as a definable map (in a suitably expanded language). 
 
 Now consider $\Sigma_n(\var{c}, \var{c}')$ be the theory $T'$ together with the following set of formulas in the variables $\var{c}$ and $\var{c'}$ both of sort $\SS$:
\begin{enumerate}
\item All statements of the form $\forall_\var{x}|\varphi(\var{x},\var{c})-\varphi(\var{x},\var{c}')|\leq\varepsilon$, where $\DOM(\var{x})$ is a sort of $\Ell$, and $\varepsilon>0$
\item The statement $\dd(\var{c}, \var{c}')\geq 1/n$
\end{enumerate}
If $\Sigma_n(\var{c},\var{c}')$ is consistent then let $\N$ be a model of $\Sigma_n$, $\M$ the reduct to $\Ell$ and $c,c' \in \SS(\N)$ be witnesses for the two variables in $\Sigma_n$.  By assumption, the $\Ell'$-type of $c$ over $\M$ is the same as that of $c'$.  Now for any ultrafilter $\U$, let $\Delta:\N \rightarrow \N^\U$ be the diagonal embedding. By choosing a suitable ultrafilter $\U$, we can find an elementary map $h:\N \rightarrow \N^\U$ such that $h\restriction_\M = \Delta\restriction_\M$ and $h(c') = \Delta(c)$.  But we are assuming that $I^*$ is faithful which would imply that $h = \Delta$ contradicting that $\dd(c,c') \geq 1/n$.
We conclude then that $\Sigma_n(\var{c},\var{c}')$ is inconsistent for every $n$.  This implies that there is a countable set of $\Ell'$-formulas $\{\varphi_i(\var{x},\var{y}):i\in\Nat\}$ such that  if 
\[
T' \models \varphi_i(x,c)=\varphi_i(x,c')
\]
 for every $i<\omega$, then $c=c'$. Now let $\SS^*=\prod_{i\in\Nat}\widehat{\SS^*_{\varphi_i}}$, which is also a sort of $\Ell^\eq$. The sequence $f(c) = (f_{\varphi_i}(c):i<\omega)$ is an element of $\SS^*$, and $f(c)=f(c')$ implies that $c=c'$ by the previous argument.  Therefore, $f$ is the desired definable map from $\SS$ into $\SS^*$ and we are done.  
\end{proof}

\section{Pretoposes and the completion of a Metric Logical Category}

The concept of a pre-topos is introduced in Expos\'e VI of \cite{sga4}. A category $\C$ is a {\em pre-topos} if all finite projective limits are representable in $\C$, $\C$ has all finite sums, and the sums are disjoint, equivalence relations in $\C$ are effective, and all epi-morphisms in $\C$ are effective universal.  In \cite{first-order-categorical-logic}, it is shown that this is equivalent to saying that $\C$ is a logical category which is closed under the formation of quotients by equivalence relations, and the formation of finite disjoint sums. Furthermore, it is also shown that for every logical category $\C$, there is a pre-topos $\P(\C)$ and a conservative logical functor $I:\C\to\P(\C)$ which is universal among all conservative expansions of $\C$.  A direct corollary of the pre-topos completion in \cite{first-order-categorical-logic} is that a (boolean) pre-topos, when viewed as a logical category (and thus as a first-order theory), corresponds to a theory which eliminates imaginaries. The construction of the pre-topos completion of $\C$ in the classical framework is parallel to the construction of $(\Ell^\eq, T^\eq)$.

Grothendieck's notion of pre-topos is much too strong for the needs of continuous logic. In general, $\DEF(\Ell, T)$ is not closed  under enough finite left limits to be completed to a pre-topos. In this section we describe a completion process for metric logical categories which produces what in essence is the largest logical category in which $\R$ can be conservatively embedded. 
%The construction is carried out in parallel with the analogous construction for a metric theory $(\Ell, T)$ of $(\Ell^\eq, T^\eq)$ which produced the largest conservative expansion of $(\Ell, T)$.

%In the Section \ref{sec:completion-of-languages}, we described three operations which, when applied to a theory $(\Ell, T)$, yield an expansion $(\Ell^\eq, T^\eq)$ which is maximal with respect to the property of being conservative. In section \ref{sec:expansion-of-categories}, we saw that the same operations, when translated into the language of metric logical categories and applied to such a category $\R$ yield a metric logical category $\P(\R)$ and a logical functor $I:\R\to\P(\R)$ which is universal as a conservative expansions. %The closure properties of $\P(\R)$, and required of an arbitrary metric logical category $\T$, give rise to a notion of metric pre-topos.
In order to describe the allowable limits and co-limits that we consider, we need some data.  Fix the following:
\begin{enumerate}
\item a metric logical category $\R$ with language and theory $(\Ell_\R,T_\R)$,
\item a small category $\Lambda$ and a diagram $D:\Lambda \rightarrow \R$,
\item an expansion of the language $\Ell_\R$ to a metric language $\Ell_\Lambda$ which includes a new sort $\SS$ and for every object $a$ in $\Lambda$, a function symbol $\pi_a:\SS \rightarrow \SS_{D(a)}$; in the case of co-limits, $\pi_a:\SS_{D(a)} \rightarrow \SS$, and 
\item $T_D$, a theory in the language $\Ell_\Lambda$ containing $T_\R$.
\end{enumerate}
With all of this data then, we say that $(D,T_D)$ is an axiomatizable cone (or co-cone) if whenever $\M$ satisfies $T_D$ then $(\SS(\M),\{\pi_a : a \in \text{Obj}(\Lambda)\})$ is a cone (or co-cone) for $ID$ where $I:\R \rightarrow \Metric$ is the logical functor associated to the $T_\R$-model $M\restriction_{\Ell_\R}$.  We say that such an $\M$ is a $\Sigma$-allowable cone (or co-cone).
 
If $(D,T_D)$ is an axiomatizable cone (co-cone),  we will call $(D,T_D)$ an axiomatizable limit (co-limit) if
\begin{enumerate}
\item there is a language $\Ell$, containing $\Ell_\R$ and an $\Ell$-theory $T$ such that the forgetful functor $F:\Mod(\Ell, T) \rightarrow \Mod(\Ell_\R, T_\R)$ is an equivalence of categories and
\item for any model $\M$ of $T$, $(\SS(\M),\{\pi_a : a \in \text{Obj}(\Lambda)\})$ is a limit (co-limit)  cone for $ID$ where $I:\R \rightarrow \Metric$ is the logical functor associated to the $T_\R$-model $F(\M)$ among all the $\Sigma$-allowable cones (co-cones).
\end{enumerate}
We say that $T$ axiomatizes the limit or co-limit for the axiomatizable cone $(D,T_D)$.
Some examples are definitely in order.

\begin{example} These examples correspond to each of the closure conditions given in the previous section for the creation of $T^\eq$.
\begin{enumerate}
\item Consider the case of the limit diagram for a countable product: the category $\Lambda$ is just a countable category with only the identity arrows.  If $\R$ is any metric logical category, $D:\Lambda \rightarrow \R$ is essentially just a choice of countably many objects from $\R$.  In this case, $T_D$ will just be $T_\R$.  If we have a $\Metric$-model of $T_\R$, that is a model of $T_\R$ with one additional sort and functions from that sort to countably many metric spaces chosen by $D$, this will be a cone for $D$.  The axioms listed for closure under countable products (see \ref{Ex.axioms} (\ref{Ex.ctble-prod})) in the construction of $T^\eq$ represent the theory $T$ asked for in the definition of axiomatizable limit and so countable products represent an example of axiomatizable limits.

\item Now suppose that $\varphi$ is a definable set for some sort $\SS_a$ corresponding to an object $a$ in $\R$, a metric logical category.  $\Lambda$ in this case is just a single object category with the identity map and $D$ will pick out the single sort $a$.  There is a function symbol $\pi_a \in \Ell_\Lambda$ from the new sort $S$ to $S_a$ and the only addtional axiom in $T_D$ beyond $T_\R$ is the sentence $\forall_\var{x} \varphi(\pi_a(\var{x}))$ which asserts that the range of $\pi_a$ is contained in the zero set of $\varphi$.  Any $\Metric$-model of $T_D$ will interpret $\pi_a$ as a map whose image is contained in the zero-set of $\varphi$.  Having a separate sort for the zero-set of $\varphi$ would axiomatize this limit and that is exactly what the axioms given in \ref{Ex.axioms} (\ref{Ex.def-sets}) do.

\item Let us do an instance of an axiomatizable co-limit. The most important one for us is the case of canonical parameters (\ref{Ex.axioms} (\ref{Ex.can-para})).  As with definable sets, $\Lambda$ will just be the trivial one point category.  Suppose that $D$ picks out an object $a$ from a metric logical category $\R$. The language $\Ell_\Lambda$ will have a new sort symbol $\SS$ and a function symbol $\pi_a:\SS_a \rightarrow \SS$.  Fix a formula $\varphi(\var{x},\var{y})$ from $\Ell_\R$ where $\var{y}$ is a variable from $\SS_a$, The only axiom beyone $T_\R$ in $T_D$ is
\[
d(\pi_a(\var{y}),\pi_a(\var{y}')) \leq \forall_\var{x} |\varphi(\var{x},\var{y}) - \varphi(\var{x},\var{y}')|.
\]
Any $\Metric$-model $\M$ of $T_D$ will have a metric space $\SS(\M)$ and map $\M(\pi_a)$ to the metric space $\SS_a(\M)$.  The axiom above will guarantee that $\M(\pi_a)$ factors through 
$\SS(\M)/\sim$ 
where $\sim$
 is the pseudo-metric defined by $\forall_\var{x} | \varphi(\var{x},\var{y}) - \varphi(\var{x},\var{y}') |$ which guarantees that the sort for the canonical parameters of $\varphi$ realizes the axiomatizable co-limit.
\item The case of finite unions (\ref{Ex.axioms}(\ref{Ex.unions})) is easy to handle and so we leave it to the reader.
\end{enumerate}
\end{example}

\begin{definition} Suppose that $\R$ is a metric logical category.
\begin{enumerate}
\item For an axiomatizable cone (or co-cone) $(D,T_D)$ we say that $\R$ has an axiomatizable limit for $(D,T_D)$ if $T_\R$ axiomatizes the limit (or co-limit) for this cone (or co-cone). That is, there is a sort in $\Ell_\R$ and the necessary connecting maps which acts as the necessary limit or co-limit.
\item A metric logical category  $\R$ is called a {\em metric pre-topos} if it has all axiomatizable limits and colimits. 
\end{enumerate}
\end{definition}

Notice that it is immediate from the definition that $\Metric$ is a metric pre-topos.

\begin{theorem}
Let $\R$ and $\S$ be metric logical categories, and let $I:\R\to\S$ be a logical functor. Then $I$ preserves all axiomatizable limits and colimits that exist in $\R$.	
\end{theorem}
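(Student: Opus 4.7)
The plan is to reduce the claim to the corresponding fact about $\Metric$-valued models, exploiting the fact that the definition of axiomatizable limit is itself phrased in terms of $\Metric$-models, together with the observation that metric logical functors compose. By Theorem \ref{thm:models-are-equivalent-to-logical-functors}, $\Metric$-valued models of $T_\S$ correspond exactly to metric logical functors $\S \to \Metric$, and analogously for $T_\R$.

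Suppose $\R$ has an axiomatizable limit for the axiomatizable cone $(D,T_D)$, realized by an object $a_\infty \in \R$ with arrows $\pi_a : a_\infty \to D(a)$. Let $J : (\Ell_\R, T_\R) \to (\Ell_\S, T_\S)$ be the interpretation associated to $I$ by Corollary \ref{thm:logical-functor-interpretation}. First, I would form a candidate axiomatizable cone $(ID, T_D')$ in $\S$ by extending $\Ell_\S$ with a new sort $\SS'$ and function symbols $\pi'_a : \SS' \to \SS_{ID(a)}$, and taking $T_D'$ to consist of $T_\S$ together with the $J$-translation of the axioms of $T_D$ (with $\SS'$ and $\pi'_a$ in place of $\SS$ and $\pi_a$). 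A direct verification, using that $J$ is an interpretation, shows that $(ID, T_D')$ is indeed an axiomatizable cone in $\S$.

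Next I would verify that $(I(a_\infty), I(\pi_a))$ realizes the axiomatizable limit for $(ID, T_D')$ in $\S$. For any metric logical functor $F : \S \to \Metric$, the composite $F \circ I : \R \to \Metric$ is again a metric logical functor, so by hypothesis the cone $(FI(a_\infty), FI(\pi_a))$ is a limit cone in $\Metric$ for $FID$ among all $\Sigma$-allowable cones. Since $F(I(a_\infty)) = FI(a_\infty)$ and $F(I(\pi_a)) = FI(\pi_a)$, every such $F$ sends $(I(a_\infty), I(\pi_a))$ to the desired limit cone in $\Metric$; combined with Corollary \ref{cor:satisfaction-via-metric-models}, this says precisely that $T_\S$ itself axiomatizes the limit in $\S$ for $(ID, T_D')$.

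The main obstacle, which is primarily bookkeeping rather than conceptual, is ensuring that $\Sigma$-allowability transfers correctly: any $\Sigma$-allowable cone in $\S$ for $ID$ must map under an arbitrary $F$ to a $\Sigma$-allowable cone in $\Metric$ for $FID$, so that the universal property in $\Metric$ truly implies the universal property in $\S$. This follows because the axioms of $T_D'$ are by construction the $J$-translates of the axioms of $T_D$, and both $J$ (on the $\Ell_\R$-part) and $F$ (on the $\Ell_\S$-part) preserve them. The colimit case is entirely dual, using the right adjoints $\exists_\pi$ in place of the left adjoints $\forall_\pi$ in the interpretation of the co-cone structure.
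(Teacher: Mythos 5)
The paper states this theorem with no proof at all, so there is nothing to compare your argument against; judged on its own merits, your proposal is essentially correct, and it is the natural argument given that the notion of axiomatizable limit is itself defined by quantifying over $\Metric$-valued models. The load-bearing steps are exactly the ones you identify: (i) a $\Metric$-model of $T_\S$ restricts along the interpretation $J$ of Corollary \ref{thm:logical-functor-interpretation} to a $\Metric$-model of $T_\R$ (equivalently, $F\mapsto F\circ I$ on logical functors, via Theorem \ref{thm:models-are-equivalent-to-logical-functors}); and (ii) a model of $T_D'$ reduces along the extension of $J$ sending $\SS'\mapsto\SS$, $\pi_a'\mapsto\pi_a$ to a model of $T_D$, so every $\Sigma$-allowable cone for $(ID,T_D')$ over a given $T_\S$-model yields a $\Sigma$-allowable cone for $(D,T_D)$ over its $T_\R$-reduct, living on the very same metric spaces. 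Since the designated cone $(FI(a_\infty),FI(\pi_a))$ is literally the same cone in $\Metric$ under both readings, the restricted universal property (with its unique mediating morphism) transfers verbatim; note that only this one inclusion of competitor classes is needed, not surjectivity. Two small points of phrasing deserve tightening: $\Sigma$-allowable cones are already objects of $\Metric$ (expansions of a fixed model), so they do not "map under $F$" --- what you actually use is the reduct in (ii); and in the colimit case the duality lies in reversing the arrows $\pi_a:\SS_{D(a)}\to\SS$, not in swapping $\forall_\pi$ for $\exists_\pi$, which plays no role here. With those clarifications your argument is a complete proof of the statement.
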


\begin{definition}
Suppose that $\R$ is a metric logical category.  We define $\P(\R)$ to be the metric logical category $\DEF(\Ell^\eq_\R,T^\eq_\R)$.
\end{definition}

We record that

\begin{theorem}\label{thm:The-forgetful-functor-is-an-equivalence}
The forgetful functor $I^*:\Mod(\P(\R))\to\Mod(\R)$ is an equivalence of categories.
\end{theorem}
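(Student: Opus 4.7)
The plan is to reduce the statement to Theorem~\ref{Teq-is-conservative} by identifying both sides of $I^*$ with the categories of metric models of the relevant canonical theories. First I would make explicit the functor $I$ appearing in the statement. The canonical interpretation $\M_{(\Ell_\R^\eq,T_\R^\eq)}:(\Ell_\R^\eq,T_\R^\eq)\to\P(\R)$, precomposed with the tautological inclusion $(\Ell_\R,T_\R)\hookrightarrow(\Ell_\R^\eq,T_\R^\eq)$, gives a $\P(\R)$-model of $(\Ell_\R,T_\R)$, and by Theorem~\ref{thm:universal-property-of-def-1} this factors through a unique logical functor $I:\R\to\P(\R)$. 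Then $I^*$ is the induced forgetful functor on $\Metric$-valued models.

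The key fact I need next is the identification
\[
\hom(\P(\R),\Metric)\simeq\Mod^*(\Ell_\R^\eq,T_\R^\eq),
\]
the continuous analogue of $\Mod(\Ell,T)\cong\hom(\DEF(\Ell,T),\Set)$ quoted in the introduction. I would prove it by composing with the canonical model $\M_{(\Ell_\R^\eq,T_\R^\eq)}$: essential surjectivity follows from Theorem~\ref{thm:universal-property-of-def-1} with $\S=\Metric$, which lifts any $\Metric$-model of $(\Ell_\R^\eq,T_\R^\eq)$ to a logical functor $\P(\R)\to\Metric$, and full faithfulness follows because every object of $\P(\R)$ is a $T_\R^\eq$-definable set and every morphism a $T_\R^\eq$-definable function, so elementary maps of $(\Ell_\R^\eq,T_\R^\eq)$-models correspond precisely to logical transformations. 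Combined with Theorem~\ref{thm:models-are-equivalent-to-logical-functors} applied to $\R$, this yields $\Mod(\R)\simeq\Mod^*(\Ell_\R,T_\R)$ and $\Mod(\P(\R))\simeq\Mod^*(\Ell_\R^\eq,T_\R^\eq)$.

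A short diagram chase then shows that under these two equivalences $I^*$ is intertwined with the reduct functor $\Mod^*(\Ell_\R^\eq,T_\R^\eq)\to\Mod^*(\Ell_\R,T_\R)$ induced by the inclusion of theories; the relevant squares commute because $\M_{(\Ell_\R^\eq,T_\R^\eq)}$ restricts over $\Ell_\R$ to $I\circ\M_\R$ by construction of $I$. Theorem~\ref{Teq-is-conservative} states precisely that this reduct functor is an equivalence of categories, which finishes the argument.

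The main obstacle is verifying the key identification $\hom(\P(\R),\Metric)\simeq\Mod^*(\Ell_\R^\eq,T_\R^\eq)$ highlighted above. Although it is a direct analogue of the classical fact, one must be careful that morphisms of $\P(\R)$ (equivalence classes of $T_\R^\eq$-definable functions) and elementary maps of metric models indeed correspond, accounting for the metric completion of every definable set. Everything else is formal manipulation of universal properties established in the preceding sections.
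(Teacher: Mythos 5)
The paper records this theorem without proof, treating it as an immediate consequence of Theorem~\ref{Teq-is-conservative} together with the definition $\P(\R)=\DEF(\Ell^\eq_\R,T^\eq_\R)$; your argument spells out exactly that intended reduction, identifying both model categories with $\Mod^*$ of the corresponding theories and transporting the reduct functor across these equivalences. The one step you rightly flag as needing care --- that $\Metric$-valued logical functors on $\DEF(\Ell^\eq_\R,T^\eq_\R)$ correspond to metric models of $(\Ell^\eq_\R,T^\eq_\R)$ itself rather than of its canonical language --- is indeed the only nontrivial verification, and your sketch of it via Theorem~\ref{thm:universal-property-of-def-1} is sound.
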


We can now restate Theorem \ref{thm:weak-conceptual-completeness} in the context of metric logical categories, and conclude that $\P(\R)$ is the largest category that is a conservative expansion of $\R$.

\begin{theorem}\label{thm:maximal-conservative-expansion-existence}
Let $\R$ be a metric logical category, and consider the category $\P(\R)$ and the logical functor $I:\R\to\P(\R)$. If $\S$ is a logical category, and $J:\R\to\S$ is a logical functor such that $J^*$ is an equivalence of categories, then there is a conservative logical functor $K:\S\to\P(\R)$ such that $KJ=I$.
\end{theorem}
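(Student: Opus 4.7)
The plan is to reduce the statement to Theorem \ref{thm:weak-conceptual-completeness} by passing through the correspondence of Corollary \ref{thm:logical-functor-interpretation}. First, translate the given data to the syntactic side: the logical functor $J:\R\to\S$ corresponds to an interpretation $j:(\Ell_\R,T_\R)\to(\Ell_\S,T_\S)$, and similarly $I:\R\to\P(\R)$ corresponds to the canonical inclusion interpretation $i:(\Ell_\R,T_\R)\to(\Ell_\R^\eq,T_\R^\eq)$ (with $\P(\R)=\DEF(\Ell_\R^\eq,T_\R^\eq)$). The hypothesis that $J^*$ is an equivalence of categories of models is precisely the hypothesis of Theorem \ref{thm:weak-conceptual-completeness} applied to the interpretation $j$.

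Applying Theorem \ref{thm:weak-conceptual-completeness} produces an interpretation $k:(\Ell_\S,T_\S)\to(\Ell_\R^\eq,T_\R^\eq)$ with the property that $k\circ j$ is the inclusion of $(\Ell_\R,T_\R)$ into $(\Ell_\R^\eq,T_\R^\eq)$, i.e.\ $k\circ j=i$. By Corollary \ref{thm:logical-functor-interpretation} applied to $k$, we obtain a logical functor $K:\S\to\P(\R)$. Because the correspondence between logical functors and interpretations is inverse on both sides and clearly respects composition, the equality $k\circ j=i$ of interpretations translates back into the equality $K\circ J=I$ of logical functors, giving the required factorization.

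It remains to verify that $K$ is conservative, which we interpret to mean that the induced forgetful functor $K^*:\Mod(\P(\R))\to\Mod(\S)$ is an equivalence of categories (equivalently, that no sentence of $\Ell_\S$ becomes newly provable after interpretation into $T_\R^\eq$). Here we use Theorem \ref{thm:The-forgetful-functor-is-an-equivalence}, which says that $I^*:\Mod(\P(\R))\to\Mod(\R)$ is an equivalence, together with the hypothesis that $J^*$ is an equivalence. Contravariant functoriality of the model-category construction, combined with $K\circ J=I$, yields $I^*\cong J^*\circ K^*$, so $K^*$ is isomorphic to a quasi-inverse of $J^*$ composed with $I^*$ and is therefore an equivalence. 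Equivalence of model categories implies conservativity at the level of the syntactic interpretation $k$ (no new sentences become valid), which is the conservativity of $K$ as a logical functor.

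The main obstacle I expect is bookkeeping around the correspondence between logical functors and interpretations, and in particular making precise that \emph{conservative} in the statement of the theorem really is captured by $K^*$ being an equivalence; once this is settled, the heart of the argument is just the conceptual completeness theorem \ref{thm:weak-conceptual-completeness} together with the universal properties developed in Theorems \ref{thm:universal-property-of-LR} and \ref{thm:universal-property-of-def-1}.
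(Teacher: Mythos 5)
Your proposal is correct and follows exactly the route the paper intends: the paper offers no separate proof of this theorem, presenting it as the restatement of Theorem \ref{thm:weak-conceptual-completeness} in categorical language via the functor--interpretation dictionary of Corollary \ref{thm:logical-functor-interpretation}, which is precisely what you carry out. Your additional verification that $K$ is conservative, via $I^*\cong J^*\circ K^*$ together with Theorem \ref{thm:The-forgetful-functor-is-an-equivalence}, supplies a detail the paper leaves implicit and is a welcome addition.
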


The following theorem is an immediate consequence of the construction of $(\Ell^\eq, T^\eq)$, and the definition of $\P(\R)$ in the previous sections.

%%\begin{theorem}
%\end{theorem}

\begin{theorem}\label{thm:T-eq-is-a-pre-topos}
%Let $(\Ell,T)$ be a metric language. Then $\DEF(\Ell^\eq, T^\eq)$ is a metric pre-topos. 
For every metric logical category $\R$, $\P(\R)$ is a metric pre-topos. Furthermore, the map $I:\R\to\P(\R)$ is universal in the sense that if $\T$ is any metric pre-topos, and $J:\R\to\T$ is a logical functor, then there is a logical functor $K:\P(\R)\to\T$ such that $KJ=I$.	
\end{theorem}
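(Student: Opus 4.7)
The plan is to address the two assertions separately: first that $\P(\R) = \DEF(\Ell^\eq_\R, T^\eq_\R)$ is a metric pre-topos, then that it satisfies the stated universal property (reading the displayed equation as $KI = J$, since $K:\P(\R)\to\T$ and $J:\R\to\T$).

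For the pre-topos assertion, I would begin by observing that the four closure operations defining $T^\eq$ (countable products, definable sets, canonical parameters, finite unions) were exactly designed to realize the four prototypical axiomatizable limits and colimits illustrated in the examples. So for those four situations, the required sorts are already present in $\Ell^\eq_\R$ by construction, and they represent objects of $\P(\R)$ that fulfill the universal property of the corresponding (co)limit; the connecting arrows are the function symbols $\pi_n$, $\ff_i$, $\pi_\varphi$, $i_j$ built into $T^\eq$. Next, I would show that an arbitrary axiomatizable cone $(D,T_D)$ reduces to these prototypes. Suppose $T$ in some language $\Ell \supseteq \Ell_\R$ axiomatizes the limit; then by hypothesis the forgetful functor $\Mod(\Ell,T)\to\Mod(\Ell_\R,T_\R)$ is an equivalence of categories, so Theorem \ref{thm:weak-conceptual-completeness} provides an interpretation $J:(\Ell,T)\to(\Ell^\eq_\R,T^\eq_\R)$ whose restriction to $\Ell_\R$ is the inclusion. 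In particular the sort $\SS$ of the axiomatization is interpreted by some object of $\P(\R)$, and the axioms of $T_D$ guarantee that this object, together with the interpretations of the $\pi_a$, forms the desired limit cone among all $\Sigma$-allowable cones. The dual argument handles colimits.

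For the universal property, suppose $J:\R\to\T$ with $\T$ a metric pre-topos. By Corollary \ref{thm:logical-functor-interpretation}, $J$ corresponds to an interpretation $J^*:(\Ell_\R,T_\R)\to(\Ell_\T,T_\T)$. I would extend $J^*$ to an interpretation $(J^*)^\eq:(\Ell^\eq_\R,T^\eq_\R)\to(\Ell_\T,T_\T)$ by transfinite induction on the stages of the $T^\eq$ construction. At each stage where a new sort is introduced via one of the four closure clauses of Example \ref{Ex.axioms}, I would use the fact that $\T$ possesses the corresponding axiomatizable limit or colimit (as established in the first half of the proof, now applied inside $\T$) to pick out a sort of $\Ell_\T$ to serve as its interpretation; the accompanying function symbols $\pi_n$, $\ff_i$, $\pi_\varphi$, $i_j$ and all the $T^\eq$-axioms of Example \ref{Ex.axioms} are then automatically validated under $(J^*)^\eq$ in $\T$ by the universality of those (co)limits. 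Running Corollary \ref{thm:logical-functor-interpretation} in reverse turns $(J^*)^\eq$ into the desired logical functor $K:\P(\R)\to\T$, and because $(J^*)^\eq$ agrees with $J^*$ on $\Ell_\R$ we obtain $KI = J$.

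The main obstacle is the inductive step in the universal property: one must verify that every axiom generated by the $T^\eq$ closure operations — not merely the defining axioms for the newly introduced sort, but also all downstream consequences about how it interacts with earlier sorts — is preserved by $(J^*)^\eq$. This comes down to checking that the universal property of each (co)limit in $\T$ forces the relevant equations and sequents to hold in $\T$, and then appealing to soundness (Proposition \ref{metric-proof-system}) to conclude that $T_\T \models (J^*)^\eq(\sigma)$ whenever $T^\eq_\R \models \sigma$. Uniqueness of $K$ up to natural isomorphism will follow because any other extension must commute, on each new sort, with the corresponding axiomatizable (co)limit structure in $\T$, which determines it uniquely.
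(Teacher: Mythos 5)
The paper offers no proof of this theorem: it is introduced only by the remark that it is ``an immediate consequence of the construction of $(\Ell^\eq, T^\eq)$, and the definition of $\P(\R)$,'' so there is no argument of the authors' to compare yours against step by step. Your reconstruction is in the spirit of what they evidently intend: the four closure clauses of Example \ref{Ex.axioms} supply the prototype (co)limits, Theorem \ref{thm:weak-conceptual-completeness} handles an arbitrary axiomatizable cone, and the universal property is obtained by pushing the interpretation through the stages of the $\eq$ construction using that $\T$ already possesses the four prototype (co)limits. You are also right to read the displayed equation as $KI=J$; as printed, $KJ$ does not even compose.

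There is, however, one step you pass over that is the real crux of the first assertion. An axiomatizable cone over $\P(\R)$ lives over the theory $(\Ell_{\P(\R)}, T_{\P(\R)})$, and applying Theorem \ref{thm:weak-conceptual-completeness} to its axiomatizing theory places the new sort $\SS$ inside $(\Ell_{\P(\R)}^\eq, T_{\P(\R)}^\eq)$ --- not inside $T_{\P(\R)}$ itself, which is what the definition of ``has the axiomatizable limit'' requires. You silently identify $(\Ell_{\P(\R)}^\eq, T_{\P(\R)}^\eq)$ with $(\Ell_\R^\eq, T_\R^\eq)$; that identification is precisely the idempotence of the $\eq$ construction (every sort of $(T^\eq)^\eq$ is in definable bijection with a sort of $T^\eq$), which is true but is proved nowhere in the paper and is not a formal triviality: one must check that each closure clause, applied to sorts already produced by the construction (countable products of countable products, canonical parameters of formulas on quotient sorts, and so on), yields a sort definably present in $T^\eq$. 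Without this lemma the first half of your argument does not close, so it should be stated and at least sketched. In the second half, note also that the sort of $\Ell_\T$ chosen at each stage satisfies the axioms of Example \ref{Ex.axioms} only up to the canonical definable isomorphism between any two sorts axiomatizing the same (co)limit, so the verification that $T_\T\models (J^*)^\eq(\sigma)$ should be routed through that isomorphism; the uniqueness of $K$ you mention at the end is not part of the statement and can be dropped.
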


% \begin{proof}
% Follows from Theorems \ref{thm:universal-property-for-product-closure}, \ref{thm:universal-property-for-quotient-closure} and \ref{thm:universal-property-for-union-closure}.
% \end{proof}

\begin{theorem}
Let $\T$ be a metric pre-topos, and let $I:\T\to\S$ be a logical functor. If $I^*$ is an equivalence of categories, then so is $I$.
\end{theorem}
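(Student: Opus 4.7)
The plan is to build a candidate quasi-inverse $J:\S\to\T$ for $I$ using conceptual completeness together with the pre-topos hypothesis, and then verify $JI\cong\mathrm{id}_\T$ directly and use a second application of conceptual completeness to obtain $IJ\cong\mathrm{id}_\S$. The first ingredient is that the canonical functor $I_0^\T:\T\to\P(\T)$ is already an equivalence whenever $\T$ is a metric pre-topos: applying the universal property in Theorem \ref{thm:T-eq-is-a-pre-topos} with target $\T$ and functor $\mathrm{id}_\T$ produces $L:\P(\T)\to\T$ with $LI_0^\T=\mathrm{id}_\T$, and applying the same universal property with target $\P(\T)$ and functor $I_0^\T$ forces $I_0^\T\circ L$ to be $\mathrm{id}_{\P(\T)}$.

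Since $I^*$ is an equivalence, Theorem \ref{thm:maximal-conservative-expansion-existence} produces a conservative logical functor $K:\S\to\P(\T)$ with $KI=I_0^\T$. Set $J:=LK:\S\to\T$. Then $JI=LKI=LI_0^\T=\mathrm{id}_\T$, so $J$ is a left inverse of $I$. This immediately gives $I^*J^*=(JI)^*=\mathrm{id}_{\Mod(\T)}$; combining this with the fact that $I^*$ is an equivalence, $J^*$ must be a two-sided quasi-inverse of $I^*$. In particular $J^*$ is itself an equivalence, and $(IJ)^*=J^*I^*\cong\mathrm{id}_{\Mod(\S)}$.

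Because $J^*$ is now known to be an equivalence, I would apply Theorem \ref{thm:maximal-conservative-expansion-existence} a second time, now to $J:\S\to\T$, to obtain a conservative logical functor $K':\T\to\P(\S)$ with $K'J=I_0^\S$, the canonical embedding of $\S$ into its own pre-topos completion. Using $JI\cong\mathrm{id}_\T$ one computes
$K'\cong K'\circ JI=(K'J)\circ I=I_0^\S\circ I$,
and substituting back gives $I_0^\S\circ(IJ)\cong K'\circ J=K'J=I_0^\S=I_0^\S\circ\mathrm{id}_\S$. Hence $I_0^\S$ identifies $IJ$ with $\mathrm{id}_\S$ up to natural isomorphism.

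The main obstacle is the final cancellation: deducing $IJ\cong\mathrm{id}_\S$ from $I_0^\S\circ IJ\cong I_0^\S\circ\mathrm{id}_\S$. This requires that $I_0^\S:\S\to\P(\S)$ be fully faithful, so that natural transformations between logical endofunctors of $\S$ are detected by post-composition with $I_0^\S$. Full faithfulness is visible in the explicit construction $\P(\S)=\DEF(\Ell_\S^{\eq},T_\S^{\eq})$ together with Theorem \ref{Teq-is-conservative}: every sort and every definable function of $\S$ embeds into the $\eq$-category, and since $T^{\eq}$ is a conservative expansion of $T$, no new definable morphisms between old sorts are introduced. Once that is in place, the natural isomorphism descends to $IJ\cong\mathrm{id}_\S$, and $I$ is an equivalence with quasi-inverse $J$.
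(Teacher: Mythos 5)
Your proof follows essentially the same route as the paper's: use Theorem \ref{thm:maximal-conservative-expansion-existence} to produce a conservative logical functor $K:\S\to\P(\T)$ lying over $I$, use the fact that $\T\to\P(\T)$ is an equivalence when $\T$ is a metric pre-topos to pull back to a functor $J:\S\to\T$, and then check that $J$ is a two-sided quasi-inverse. The paper compresses that last verification into the phrase ``a straightforward calculation,'' so you have actually written out more than the published argument, and your derivation of $JI=\mathrm{id}_\T$ and of $(IJ)^*\cong\mathrm{id}$ is clean. Two points deserve flagging. First, your claim that the universal property \emph{forces} $I_0^\T\circ L=\mathrm{id}_{\P(\T)}$ needs a uniqueness clause in Theorem \ref{thm:T-eq-is-a-pre-topos} that is not stated there (the paper sidesteps this by simply asserting the equivalence $E:\T\to\P(\T)$). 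Second, and more substantively, the final cancellation requires $I_0^\S:\S\to\P(\S)=\DEF(\Ell_\S^{\eq},T_\S^{\eq})$ to be \emph{full}, and your justification does not quite reach it: conservativity of $T_\S^{\eq}$ over $T_\S$ shows that every $T_\S^{\eq}$-definable map between old sorts is already $T_\S$-definable, but it does not show that every $T_\S$-definable map between objects of $\S$ is an arrow \emph{of $\S$} --- nothing in the definition of a metric logical category guarantees that $\S\to\DEF(\Ell_\S,T_\S)$ is full. Either that fullness needs a separate argument (or hypothesis), or the cancellation should be run along a functor known to be fully faithful, e.g.\ by analyzing the conservative $K$ directly. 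This gap is inherited from, not added to, the paper's own ``straightforward calculation.''
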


\begin{proof}
Given a logical functor $I:\T\to\S$ with the property that $I^*$ is an equivalence of categories, then by Theorem \ref{thm:maximal-conservative-expansion-existence}, there is a logical functor $J:\S\to\P(\T)$. Since $\T$ is a pre-topos, there is an equivalence of categories $E:\T\to\P(\T)$, so that $EJ:\S\to \T$. A straightforward calculation shows that $EJ$ is inverse to $I$, which implies $I$ is an equivalence of categories.
\end{proof}

\begin{corollary}\label{thm:pre-topos-equiv-of-cats}
Every small pretopos is of the form $\DEF(\Ell^\eq, T^\eq)$
\end{corollary}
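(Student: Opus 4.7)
The plan is to apply the two immediately preceding results to the canonical pre-topos completion $\P(\T)$ of $\T$ itself. Given a small metric pre-topos $\T$, let $(\Ell_\T, T_\T)$ be its canonical language and theory as constructed earlier in the paper, and form $\P(\T) = \DEF(\Ell_\T^\eq, T_\T^\eq)$. By Theorem \ref{thm:T-eq-is-a-pre-topos}, there is a canonical logical functor $I:\T \to \P(\T)$ arising from the inclusion $(\Ell_\T, T_\T) \subseteq (\Ell_\T^\eq, T_\T^\eq)$ together with the universal property of $\DEF$.

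Next, I would invoke Theorem \ref{thm:The-forgetful-functor-is-an-equivalence}, which asserts precisely that the induced forgetful functor $I^*:\Mod(\P(\T))\to \Mod(\T)$ is an equivalence of categories; this is essentially the conservativity of $T_\T^\eq$ over $T_\T$ (Theorem \ref{Teq-is-conservative}) transported to the categorical setting. With $I^*$ shown to be an equivalence, and with $\T$ a pre-topos by assumption, the theorem immediately preceding this corollary applies and gives that $I$ itself is an equivalence of categories. Combining these, $\T \cong \P(\T) = \DEF(\Ell_\T^\eq, T_\T^\eq)$, which is the desired form.

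There is essentially no obstacle at this stage, since all the real work has been done in the two preceding theorems and in the soundness/conservativity results for $T^\eq$. The only thing to check is that the data chosen, namely $(\Ell, T) = (\Ell_\T, T_\T)$, is a genuine metric theory so that $\Ell^\eq$ and $T^\eq$ make sense; this follows from the definition of a metric logical category, whose canonical language and theory are metric by construction (the pseudo-metrics $d_B$ on every object $B$ are part of the data of $\T$, and Proposition \ref{metric-proof-system} guarantees the relevant soundness).
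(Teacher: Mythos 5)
Your argument is correct and is essentially the proof the paper intends: the corollary is stated without proof, but it is meant to follow exactly as you describe, by applying Theorem \ref{thm:The-forgetful-functor-is-an-equivalence} to get that $I^*:\Mod(\P(\T))\to\Mod(\T)$ is an equivalence and then the immediately preceding theorem (with $\S=\P(\T)$) to conclude that $I:\T\to\P(\T)=\DEF(\Ell_\T^\eq,T_\T^\eq)$ is itself an equivalence. The only nuance worth noting is that the preceding theorem's own proof already invokes the equivalence $\T\cong\P(\T)$ for a pre-topos $\T$ (obtained from the universal property in Theorem \ref{thm:T-eq-is-a-pre-topos} applied to the identity on $\T$), so the corollary can equally be read off directly from that universal property; the smallness hypothesis is what guarantees that $\Ell_\T$ is a genuine (set-sized) metric language, as you correctly check at the end.
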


\nocite{model-theory, cc-continuous-logic, basic-concepts-of-enriched-category-theory, categories-for-the-working-mathematician, metric-spaces-generalized, ultraproducts-in-analysis, ttt}

\bibliographystyle{plainnat}
\bibliography{bibliography}

\end{document}